\documentclass[12pt,reqno]{amsart}

\usepackage[a4paper, centering, left=3cm, right=3cm, top=3cm, bottom=3cm]{geometry}
\allowdisplaybreaks
\numberwithin{equation}{section}

\usepackage{amsmath,amsthm,amssymb,mathrsfs,cases}
\usepackage{mathtools}
\usepackage{nicematrix,tikz-cd}
\usetikzlibrary{positioning}
\usepackage{enumitem}

\usepackage{setspace}
\makeatletter
\renewcommand{\@secnumfont}{%
  \ifnum\value{subsection}>0 \bfseries \else \normalfont \fi
}
\renewcommand{\section}{\@startsection{section}{1}%
  \z@{-3.5ex \@plus -1ex \@minus -.2ex}{2.3ex \@plus.2ex}%
  {\normalfont\scshape\centering}}
\renewcommand{\subsection}{\@startsection{subsection}{2}%
  \z@{-3.25ex \@plus -1ex \@minus -.2ex}{1.5ex \@plus .2ex}%
  {\normalfont\bfseries}}
\renewcommand{\subsubsection}{\@startsection{subsubsection}{3}%
  \z@{-3.25ex \@plus -1ex \@minus -.2ex}{1.5ex \@plus .2ex}%
  {\normalfont\bfseries}}
\makeatother

\usepackage{xcolor}
\definecolor{tealblue}{rgb}{0.0, 0.45, 0.45}
\definecolor{brickred}{rgb}{0.6, 0.1, 0.1}
\definecolor{purnku}{rgb}{0.494,0.047,0.431}
\definecolor{winered}{rgb}{0.5,0,0}
\definecolor{softblue}{RGB}{25,25,112}
\definecolor{darkviolet}{rgb}{0.4, 0.0, 0.4}
\usepackage[hidelinks]{hyperref}
\hypersetup{colorlinks=true, linkcolor=brickred, citecolor=tealblue, urlcolor=darkviolet}
\usepackage[capitalize,noabbrev,sort&compress]{cleveref}

\newcommand*{\bbr}{\mathbb{R}}

\newcommand*{\bbc}{\mathbb{C}}
\newcommand*{\bbs}{\mathbb{S}}
\newcommand*{\bbt}{\mathbb{T}}

\newcommand*{\bbz}{\mathbb{Z}}

\newcommand*{\calb}{\mathcal{B}}
\newcommand*{\cale}{\mathcal{E}}

\newcommand*{\calf}{\mathcal{F}}

\newcommand*{\calr}{\mathcal{R}}

\newcommand*{\calw}{\mathcal{W}}
\newcommand*{\caly}{\mathcal{Y}}
\newcommand*{\call}{\mathcal{L}}
\newcommand*{\calm}{\mathcal{M}}

\newcommand*{\calk}{\mathcal{K}}
\newcommand*{\calq}{\mathcal{Q}}
\newcommand*{\cals}{\mathcal{S}}

\newcommand*{\scrr}{\mathscr{R}}

\newcommand*{\cm}{\mathrm{c}}

\newcommand*{\di}{\mathrm{d}}

\DeclareMathOperator{\ind}{ind}

\DeclareMathOperator{\tr}{{tr}}

\DeclareMathOperator{\inter}{Int}

\DeclareMathOperator{\supp}{supp}

\DeclareMathOperator{\rk}{rank}

\DeclareMathOperator{\ch}{ch}

\DeclareMathOperator{\id}{Id}
\DeclareMathOperator{\dist}{dist}

\DeclareMathOperator{\sat}{Sat}
\DeclareMathOperator{\re}{Re}

\newcommand*{\bv}{{\beta,\varepsilon}}
\newcommand*{\bvr}{{\beta,\varepsilon,r}}

\newcommand*{\f}{{\calf_1^\perp}}
\newcommand*{\ff}{{\calf_2^\perp}}

\newcommand*{\hfz}{{\widetilde{\calf}}}
\newcommand*{\hf}{{\widetilde{\calf}_{1}^\perp}}
\newcommand*{\hff}{{\widetilde{\calf}_{2}^\perp}}

\newcommand*{\rtwo}{\mathrm{\uppercase\expandafter{\romannumeral2}}}

\newcommand*{\ws}{{\calw_r}}

\newcommand*{\ms}{{{\calm}_r}}

\newcommand*{\kk}{{\widetilde{\calk}_{r}}}

\newcommand*{\mm}{{\widetilde{\calm}_{r}}}
\newcommand*{\pp}{{\widetilde{\pi}_{r}}}

\newcommand*{\ww}{{\widetilde{\calw}_{r}}}
\newcommand*{\ee}{{\widetilde{\cale}}}

\newcommand*{\we}{\bar{e}}

\newcommand*{\rmh}{\mathrm{H}}
\newcommand*{\rml}{\mathrm{L}} 

\newcommand*\abs[1]{\lvert#1\rvert}
\newcommand*\babs[1]{\big\lvert#1\big\rvert}
\newcommand*\bbabs[1]{\Big\lvert#1\Big\rvert}

\newcommand*\inn[1]{\langle#1\rangle}

\newcommand*\binn[1]{\big\langle#1\big\rangle}
\newcommand*\bbinn[1]{\Big\langle#1\Big\rangle}
\newcommand*\bbbinn[1]{\bigg\langle#1\bigg\rangle}
\newcommand*\ket[1]{(#1)}

\newcommand*\bket[1]{\big(#1\big)}
\newcommand*\bbket[1]{\Big(#1\Big)}
\newcommand*\bbbket[1]{\bigg(#1\bigg)}

\newcommand*\bcu[1]{\big[#1\big]}

\newcommand*\bac[1]{\big\{#1\big\}}
\newcommand*\bbac[1]{\Big\{#1\Big\}}
\newcommand*\bbbac[1]{\bigg\{#1\bigg\}}

\newcommand*\en[1]{\mathrm{End}(#1)}

\newtheorem{thm}{Theorem}[section]
\newtheorem{lemma}[thm]{Lemma}

\newtheorem{prop}[thm]{Proposition}
\newtheorem{prob}[thm]{Problem}

\theoremstyle{definition}
\newtheorem{rem}[thm]{Remark}

\newtheorem{defn}[thm]{Definition}

\crefname{thm}{Theorem}{Theorems}
\crefname{conj}{Conjecture}{Conjectures}
\crefname{prop}{Proposition}{Propositions}
\crefname{set}{Setting}{Settings}
\crefname{prob}{Problem}{Problems}
\crefname{cor}{Corollary}{Corollaries}
\crefname{lem}{Lemma}{Lemmas}
\crefname{defn}{Definition}{Definitions}
\crefname{rem}{Remark}{Remarks}
\crefname{exam}{Example}{Examples}

\begin{document}

\title[Positive scalar curvature on foliations: long neck problem]{Positive scalar curvature on foliations: \\ long neck problem}

\author{Hengyu Chen and Guangxiang Su}
 
\address{Chern Institute of Mathematics \& LPMC,\newline
\hspace*{18pt}Nankai University, Tianjin 300071, P. R. China}
\email{hichy@mail.nankai.edu.cn}

\address{Chern Institute of Mathematics \& LPMC,\newline
\hspace*{18pt}Nankai University, Tianjin 300071, P. R. China}
\email{guangxiangsu@nankai.edu.cn}
\date{\today}

\begin{abstract}
We prove estimates for Gromov's Long Neck Foliated Conjecture in the spin cases, which involves leafwise scalar curvature. The same methods yield a quantitative version of a theorem of Su--Wang--Zhang and a Llarull-type theorem for closed foliated manifolds. A refinement, which involves mean curvature and partially generalizes a theorem of Cecchini--Zeidler, is also obtained. 
\end{abstract}

\maketitle

\tableofcontents

\section{Introduction}

\subsection{Long neck problem}
The study of manifolds with positive scalar curvature metrics is an important subject in differential geometry. Using the Dirac operator, Lichnerowicz \cite{lic63} proved that if a closed spin manifold $M$ admits a positive scalar curvature metric, then $\widehat{A}(M)=0$. On compact manifolds with boundary, Gromov  proposes the following problem:

\begin{prob}[Long Neck Problem {\cite[p.~330]{g23}}]
Let $(M,g)$ be an $m$-dimensional, orientable, Riemannian manifold with boundary and $\Phi:M\to \bbs^m$ be a smooth area non-increasing map, where $\bbs^m$ is the standard sphere. 

What kind of a lower bound on the scalar curvature $k_g$ and a lower bound on the ``length of the neck" of $(M,\Phi)$, that is the distance between the support of the differential of $\Phi$ and the boundary of $M$, would make $\deg\Phi=0$?
\end{prob}

Cecchini \cite[Thm.~A]{ce20} settles the long neck problem for spin manifolds and strictly area decreasing maps using the Dirac operator. In odd dimensions, the map is further assumed to be constant near the boundary. 

In \cite[Thm.~1.4~and~Cor.~1.5]{cz24}, Cecchini and Zeidler refine \cite[Thm.~A]{ce20} for even-dimensional spin manifolds and area non-increasing maps using a lower bound on the mean curvature. Via the spectral flow method (cf.\ \cite{lsw}), Shi \cite{shi25-1} extends \cite[Thm.~1.4~and~Cor.~1.5]{cz24} to odd-dimensional spin manifolds, where the maps are locally constant near the boundary and area non-increasing.

For foliated manifolds with boundary, Gromov \cite[pp.~258--259]{g23} proposes the Long Neck Foliated Conjecture and suggests that combining \cite{ce20} and \cite{swz22} may lead to a proof for spin manifolds or spin foliations. Combining the methods from \cite{cz24,chen,swz22,wx25,zhang17,zhang20}, we will prove estimates for this conjecture in the spin cases (see \cref{t1.2}). We first recall some definitions. 

Throughout this paper, let $\bbs^m$ denote the unit sphere equipped with the metric $g_0$ induced from the standard flat metric on $\bbr^{m+1}$. Suppose $M$ is a smooth manifold of dimension $m$ with or without boundary. We call $(M,F)$ a foliated manifold, if $F\subseteq TM$ is a smooth integrable subbundle of rank $q$; that is, for any smooth sections $X,Y\in \Gamma(F)$, we have
\begin{equation}
    [X,Y]\in\Gamma(F).
\end{equation}
 Unless otherwise specified, we do not assume that $F$ is either transverse or tangent to the boundary $\partial M$ of $M$. Let $\inter M$ denote the interior of $M$. By the  Frobenius Theorem, $F|_{\inter M}$ determines a foliation on $\inter M$ in the classical sense.

Suppose $\Phi\colon M\to \bbs^m$ is a smooth map. For $x\in M$, let $\di_x\Phi\colon T_xM\to T_{\Phi(x)}\bbs^m$ denote the differential of $\Phi$ at $x$. The \emph{support} of $\di\Phi$ is defined to be 
\begin{equation}
    \supp(\di\Phi) = \overline{\{x\in M:\di_x\Phi \ne 0\}}.
\end{equation}

Given a smooth metric $g^F$ on $F$, we can simultaneously diagonalize $g^F$ and $(\Phi^*g_0)|_F$ at each point $x\in M$ to obtain a basis $\{e_j\}_{j=1}^q$ of $F_x$ that is orthonormal with respect to $g^F$ and orthogonal with respect to $(\Phi^*g_0)|_F$. For $1\leq j \leq q$, define
\begin{equation}
    \lambda_j(x) = |\di_x\Phi(e_j)|_{g_0}\ge 0,
\end{equation}
which are the singular values of the differential $\di_x\Phi|_F$. 
Let $|\cdot|_{\tr}$ denote the trace norm of a linear map. The differential $\di\Phi|_F\colon F\to T\bbs^m$ induces a linear map $\wedge^2 (\di\Phi|_F) \colon \wedge^2F \to \wedge^2 \,T\bbs^m$, where $\wedge^2 F$ and $\wedge^2 \,T\bbs^m$ are the second exterior powers of the corresponding bundles. At $x\in M$, the \emph{trace norm of} $\wedge^2 (\di\Phi|_F)$ is given by
\begin{equation}\label{deflam}
    |\wedge^2\!(\di\Phi|_F)\,|_{\tr}(x) = \sum_{1\leq i<j\leq q}\lambda_i(x)\,\lambda_j(x)
\end{equation}
and is a continuous function on $M$.

For subsets $K_0,K_1\subseteq M$, the leafwise distance ${\rm dist}_F(K_0,K_1)\in[0,\infty]$ is defined as the infimum of the length of smooth curves $\gamma\colon [0,1]\to M$ such that
\begin{equation}\label{dis00}
    \gamma(b)\in K_b\;(b=0,1)\quad\text{and}\quad
    \dot{\gamma}(t)\in F_{\gamma(t)}\;(0\le t\le 1).
\end{equation} 
If no smooth curve satisfies \eqref{dis00}, we define $\dist_F(K_0,K_1) = \infty$. For a subset $L\subseteq M$, we define the \emph{saturation} of $L$ on $(M,F)$ to be 
\begin{equation}\label{defsat}
    \sat_M(L) = \{x\in M:\dist_F(x,L)<\infty\}\subseteq M.
\end{equation}
When $\partial M =\varnothing$, we have $\inter M = M$, and then $\sat_M(L)\subseteq M$ is the union of the leaves that intersect $L$. For simplicity, write $\sat(L)$ for $\sat_M(L)$ if $\partial M=\varnothing$.

At each point $x\in \inter M$, let $\call_x$ denote the leaf of $(\inter M,F)$ that passes through $x$, which is a smooth immersed submanifold of $\inter M$ such that $F|_{\call_x} = T\call_x$. The \emph{leafwise scalar curvature} $k^F\in C^\infty(\inter M)$ is defined as the scalar curvature of $\call_x$ with respect to $g^F$. 

Our first theorem is about Gromov's Long Neck Foliated Conjecture \cite[pp.~258--259]{g23} in the spin cases. 

\begin{thm}\label{t1.2}
Let $M$ be a compact, connected, oriented manifold of dimension $m$ with $\partial M\ne\varnothing$. Let $F\subseteq TM$ be a smooth integrable subbundle of rank $q\ge 2$ equipped with a smooth metric $g^F$. Assume either $TM$ or $F$ is spin. Let $\Phi\colon M\to\bbs^m$ be a smooth map that is locally constant near $\partial M$ and of nonzero degree. Let $L\subseteq M$ be a compact subset with $\sat_M(L)\subseteq M$ defined by \eqref{defsat}. If
\begin{enumerate}[label=\textup{(\roman*)},ref=\textup{\roman*}]
    \item $L\subseteq\supp(\di\Phi)\subseteq \sat_M(L)$;\label{1.2i}
    \item $k^F|_L > 2\,|\wedge^2\!(\di\Phi|_F)\,|_{\tr}$ and $k^F|_{\supp(\di\Phi)\setminus L} \ge 2\,|\wedge^2\!(\di\Phi|_F)\,|_{\tr}$;\label{1.2ii}
    \item $k^F|_{\inter M \setminus \supp(\di\Phi)} \geq \alpha^2 q(q-1)$ \label{1.2iii}
\end{enumerate}
for some $\alpha>0$, then
\begin{equation}\label{dis0}
    \dist_F(\supp(\di\Phi),\partial M)\le \frac{\pi}{\alpha q}.
\end{equation}
\begin{figure}[htbp]
    \centering
    \includegraphics[width=0.75\textwidth]{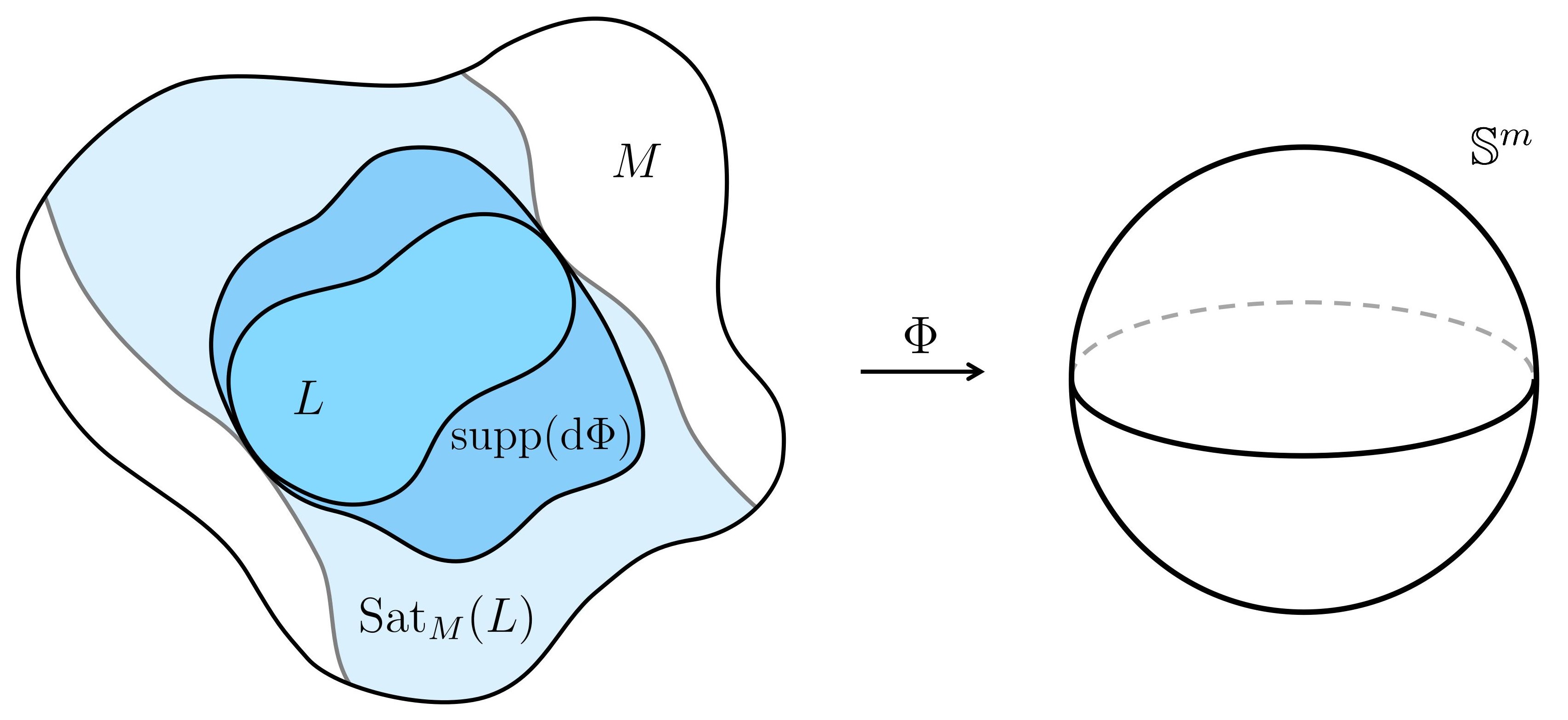}
    \caption{The map $\Phi\colon M\to\bbs^m$.}
\end{figure}
\end{thm}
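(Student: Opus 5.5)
We argue by contradiction: assume $\dist_F(\supp(\di\Phi),\partial M)>\pi/(\alpha q)$. Then we can pick $r$ with $\pi/(\alpha q)<r<\dist_F(\supp(\di\Phi),\partial M)$. The idea combines Cecchini's long-neck argument with the Connes fibration / adiabatic-limit technique of Zhang and Su--Wang--Zhang for leafwise scalar curvature.

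\textbf{Step 1: potential and truncation.} Let $\rho$ be a smoothing of the leafwise distance to $\supp(\di\Phi)$. Along leaves $|\di\rho|_{g^F}\le 1+\delta$, and $\rho\ge r$ near $\partial M$. Put $\psi = \alpha q\,\tan(\alpha q\rho/2)$ on $\{\rho<\pi/(\alpha q)\}$ (up to normalization chosen so that $\tfrac{q}{4(q-1)}\cdot\alpha^2 q(q-1) + \psi^2 - |\di\psi|_F\ge 0$), so that $\psi$ blows up at $\rho=\pi/(\alpha q)<r$. Cut $M$ along a regular level $\{\rho = \pi/(\alpha q)-\eta\}$ to get a compact region $N\supseteq\supp(\di\Phi)$ on which $\Phi$ is locally constant near $\partial N$. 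Then double $N$, or glue cylinders, to obtain a closed foliated manifold on which $\Phi$ extends with degree $\deg\Phi$ on one copy and constant on the other. Since $\Phi$ is locally constant near the cut, the twisted bundle $\Phi^*E$ ($E$ the spinor bundle of $\bbs^m$, or the pair $E_\pm$ in even dimension, with the usual parity bookkeeping for $m$ odd) is trivialized there. The relative index $\ind(D_{\Phi^*E})-\ind(D_{\Phi^*E_0})$ then equals $\deg\Phi\cdot\chi(\bbs^m)$ or the corresponding nonzero multiple, by Atiyah--Singer together with the hypothesis that $\deg\Phi\ne0$.

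\textbf{Step 2: sub-Dirac operator.} On the Connes fibration $\pi:\mathcal M\to M$ (space of transverse metrics) we build Zhang's sub-Dirac operator. Deform it adiabatically, with leafwise metric $\beta^{-2}g^F$ and transverse parameter $\varepsilon$, and add a Callias-type potential $\psi\,\hat c(\cdot)$ that anticommutes and acts as the boundary chirality, in the manner of Cecchini--Zeidler. The Lichnerowicz formula gives a leading leafwise term
\[
\frac{k^F}{4}+\frac{1}{4}\mathcal R^{\Phi^*E}+\psi^2-|\di\psi|_F ,
\]
plus terms of order $O(\beta)$ and $O(\varepsilon)$ that can be made small. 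The curvature term satisfies $\mathcal R^{\Phi^*E}\ge -2|\wedge^2(\di\Phi|_F)|_{\tr}$ (Llarull's estimate restricted to $F$). Hence the leading term is nonnegative on $\supp(\di\Phi)$ by (ii), and on the neck it is bounded below by $\tfrac{\alpha^2 q(q-1)}{4}+\psi^2-|\di\psi|$ by (iii). The choice of $\psi$ makes this nonnegative, with strict inequality up to $\delta$.

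\textbf{Step 3: strict positivity.} We need a uniform strictly positive lower bound to kill the index. On $L$ we have strict inequality from (ii). To propagate positivity to $\supp(\di\Phi)$, we use (i), $\supp(\di\Phi)\subseteq\sat_M(L)$, in a leafwise unique-continuation/spectral argument as in Su--Wang--Zhang. Concretely, we perturb $\psi$ by a small term that is positive along leaves through $L$, or we use the fact that a harmonic section vanishing on $L$ must vanish on its saturation. Alternatively, we slightly rescale $\Phi$ by a map of degree $\deg\Phi$ that is strictly contracting off $L$. The main obstacle is making this step compatible with the adiabatic error terms, which are only uniformly small on compact sets. Equally delicate is controlling the Connes-fibration error terms, since they must be absorbed by a positive margin that is only strict on $L$. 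I would first try a compactness argument: since the set where the leading term exceeds $c>0$ contains an $F$-saturated neighborhood after leafwise flow from $L$, the margin becomes uniform on $\supp(\di\Phi)$.

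\textbf{Step 4: conclusion.} With a uniformly positive operator, the index vanishes, contradicting Step 1. Letting $\delta,\eta\to0$ gives $\dist_F(\supp(\di\Phi),\partial M)\le \pi/(\alpha q)$.
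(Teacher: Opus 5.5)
Your proposal has genuine gaps in Step 1 and in Steps 2--3.

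\textbf{Step 1 (cutting and doubling).} The reduction to a closed or cylindrical-end foliated manifold does not work.

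The theorem does not assume $F$ transverse to $\partial M$. A level set $\{\rho=\pi/(\alpha q)-\eta\}$ of a smoothed distance function has no reason to be transverse to $F$ either. So $F$ does not extend to the double of $N$ or across an attached cylinder.

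Even granting transversality, you give no estimate at the seam, and that is exactly where the neck length must be used. Suppose the double could be made a smooth closed foliated manifold with $k^F\ge\alpha^2q(q-1)$ near the seam. Then every leaf would contain a point where $k^F\ge 2|\wedge^2(\di\Phi|_F)|_{\tr}$ is strict (on $L$, its mirror image, $\partial(\supp(\di\Phi))$, or the neck). \cref{main11} would then give a contradiction with no neck hypothesis at all.

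The paper never cuts in the general case. It proceeds as follows:
\begin{enumerate}
\item An Arzel\`a--Ascoli argument produces a collar $U_1$ of $\partial M$ with $\dist_F(\supp(\di\Phi),\overline{U}_1)>\pi/(\alpha q)$.
\item It replaces $g$ by $\widetilde g=y^{-2}g$ near $\partial M$. This makes $\inter M$ complete, with $\widetilde k^F$ merely bounded below.
\item It applies \cref{t1.4} with $W=M\setminus U_1$, $L'=L\cup\partial(\supp(\di\Phi))$ and every $l<\pi/(\alpha q)$. This forces $\inf\widetilde k^F=-\infty$, a contradiction.
\end{enumerate}
Inside \cref{t1.4}, $\psi_\beta$ is a large constant outside $W$. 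The operator is therefore invertible at infinity, the Gromov--Lawson relative index theorem applies, and the $\tan$ profile pays for the negative curvature of the end. Only when $F$ is transverse to the boundary does the paper work on a compact piece, with the boundary condition $\chi=c_\bv(\nu)\vartheta$ and boundary term $\frac{qH^{\calf}}{2\beta}+\psi_\beta$ (\cref{main3}).

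Two smaller points. The leafwise distance to $\supp(\di\Phi)$ is in general not continuous, so it cannot simply be smoothed. The paper instead uses $\dist_{g_{\beta_0}}$ for $g_{\beta_0}=g^F\oplus\beta_0^{-2}g^{F^\perp}$, via \cite[Prop.~2.1]{chen}. Also, ``$\rho\ge r$ near $\partial M$'' needs the collar argument above.

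\textbf{Steps 2--3 (sharp constant versus propagation of positivity).} These two requirements conflict, and you resolve neither.

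With the plain Lichnerowicz term $k^F/4$ written in your Step 2, your normalized $\psi$ satisfies $\psi^2-\psi'=-\alpha^2q^2/4$. Condition (iii) only gives $k^F/4\ge\alpha^2q(q-1)/4$, so the neck term is negative. Reaching $\pi/(\alpha q)$ requires the Penrose-refined formula with coefficient $\frac{q}{q-1}$ on the leafwise curvature (\cite[Thm.~4.11--4.12]{chen}).

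That refinement uses up the leafwise gradient term. A leafwise gradient term is exactly what is needed to pass from positivity near $L$ to an estimate on $\supp(\di\Phi)\subseteq\sat(L)$, where the curvature inequality may be an equality.

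None of your Step 3 devices supplies this:
\begin{enumerate}
\item The set where the leading term exceeds $c>0$ is only a neighborhood of $L$. Pointwise positivity does not spread along leaves.
\item Unique continuation is qualitative. It cannot produce a bound $\|\calb u\|\ge C_0\|u\|$ that beats the error terms $O(\beta^{-2}r^{-1})$ and $O(\beta^{-1}+\varepsilon\beta^{-2})$.
\item Postcomposing $\Phi$ with a map strictly area-contracting on $\Phi(\supp(\di\Phi)\setminus L)$, which is typically all of $\bbs^m$, is impossible. No self-map of $\bbs^m$ of nonzero degree is strictly area-contracting everywhere.
\end{enumerate}

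The missing idea is the quantitative leafwise Poincar\'e inequality \cref{lem2}. It is proved by integrating $\partial_t|u|^2$ along horizontal lifts of leafwise flows, with Jacobians bounded uniformly in $\beta,\varepsilon$ by almost isometry. It is applied to the twisted connection $\nabla^\bv+A$ with $A_X=-q^{-1}c_\bv(X)(\psi_\beta\vartheta+\beta^{-1}V)$.

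By \eqref{penrose} and \cref{lem3}, $\sum_i|P^{\bv;q}_{e_i}u_1|^2+\frac{1}{q}|\calb u_1|^2$ controls $\sum_i|(\nabla^\bv_{e_i}+A_{e_i})u_1|^2$ up to admissible errors. Splitting $\|\calb u_1\|^2$ into two halves therefore keeps the $\frac{q}{q-1}$ curvature term while leaving enough twisted leafwise energy. That energy, together with $\beta^{-2}\int_{\bar\pi^{-1}(U)}|u_1|^2$ on an open $U\supseteq L$ where $\calq_\bv\ge a\beta^{-2}$, bounds $\beta^{-2}\int_{\bar\pi^{-1}(K)}|u_1|^2$.
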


\begin{rem}\label{rem1}
(1) If $F$ is transverse\footnote{That is, $F_x + T_x\partial M = T_xM$ for every $x\in\partial M$. } to $\partial M$ in \cref{t1.2}, then we have
\begin{equation}\label{1.5}
    \dist_F(\supp(\di\Phi),\partial M) < \frac{\pi}{\alpha q};
\end{equation}
see \cref{t1.2-0} and \cref{r4.6}. Note that $TM$ is always transverse to $\partial M$, so \eqref{1.5} generalizes \cite[Cor.~1.5]{cz24} and \cite[Cor.~1.6]{shi25-1} to foliations.

(2) For example, if $k^F|_{\supp(\di\Phi)} > 2\, |\wedge^2\!(\di\Phi|_F)\,|_{\tr}$, then $L = \supp(\di\Phi)$ satisfies the conditions \eqref{1.2i}--\eqref{1.2ii} in \cref{t1.2}. 

Assume $k^F|_{\supp(\di\Phi)} \ge 2\,|\wedge^2\!(\di\Phi|_F)\,|_{\tr}$ and $k^F|_{M \setminus \supp(\di\Phi)} \geq \alpha^2 q(q-1)$. If
\begin{equation}
    \supp(\di\Phi) \subseteq \sat_M\ket{\partial(\supp(\di\Phi))},
\end{equation}
where $\partial(\supp(\di\Phi))$ is the topological boundary of $\supp(\di\Phi)$, then $L = \partial(\supp(\di\Phi))$ satisfies $k^F|_L > 2\,|\wedge^2\!(\di\Phi|_F)\,|_{\tr}$.

(3) The condition ``$\supp(\di\Phi) \subseteq \sat_M(L)$'' is needed in \cref{t1.2}. For example, take $M = \bbs^{m-1}\times [-\pi,\pi]$ and $F = T\bbs^{m-1}$ ($m\ge 3$). The standard metric on $\bbs^{m-1}$ induces a  smooth metric $g^F$ on $F$. Take a smooth non-decreasing surjective function $f\colon [-\pi,\pi]\to [-\frac{\pi}{2},\frac{\pi}{2}]$ that is constant near $\pm\pi$. Define a smooth map
\begin{equation}
    \Phi\colon \bbs^{m-1}\times [-\pi,\pi]\to \bbs^m,\quad \Phi(x,t) = \bket{x\cdot \cos (f(t)), \sin (f(t))}.
\end{equation}
of nonzero degree, which is locally constant near the boundary. Moreover, we have
\begin{equation}
    k^F \equiv (m-1)(m-2) \ge 2\, |\wedge^2\!(\di\Phi|_F)\,|_{\tr}.
\end{equation}
However, $\dist_F(\supp(\di\Phi),\partial M) = \infty$ since the leaves are $\{\bbs^{m-1}\times\{t\}\}_{t\in[-\pi,\pi]}$.
\end{rem}

\begin{rem}
The estimate \eqref{1.5} is optimal in all dimensions. Suppose $2\le q\le m$ and $\alpha >0$. For a small positive number $\eta$, define $f_\eta:\bbr\to \bbr$ to be a smooth even function such that $f_\eta$ is monotone on $[0,\infty)$, $f_\eta \equiv 1$ on $[-\eta,\eta]$ and $f_\eta \equiv \alpha$ outside $[-2\eta,2\eta]$. Let $\varphi_\eta$ be the solution of the following ODE (cf.\ \cite[pp.~653--654]{g18}):
\begin{equation}
    \begin{cases}{}
        2 \frac{\varphi_\eta''}{\varphi_\eta} + (q-2)\frac{(\varphi_\eta')^2}{\varphi_\eta^2} = - f^2_\eta \,q ,&\\
        \varphi_\eta(0) = 1 \text{ and } \varphi'_\eta(0) = 0.&
    \end{cases}
\end{equation}
Let $(-T_\eta,T_\eta)$ denote the maximal interval on which $\varphi_\eta >0$. Note that $\lim_{\eta\to 0^+} T_\eta = \frac{\pi}{\alpha q}$. Proceed as in \cite[Prop.~5.2]{cz24}. For any $\delta>0$, take a finite covering $\widetilde{\bbt}^{q-1}$ of ${\bbt}^{q-1}$ and a smooth $\delta$-Lipschitz map $f_\delta\colon \widetilde{\bbt}^{q-1}\to \bbs^{q-1}$ of degree one. For sufficiently small $\epsilon>0$, take a smooth map $h_\epsilon\colon[-T_\eta +\epsilon, T_\eta-\epsilon]\to \bbs^1$ that is constant outside $[-2\eta,2\eta]$ and of degree one. For $q-1\le j\le m-1$, take smooth maps $\phi_j\colon \bbs^{\,j}\times\bbs^1\to\bbs^{\,j+1}$ of degree one, which factor through the corresponding smash products, such that the smooth map $\Phi_{\delta,\epsilon}\colon M\to \bbs^m$ defined by
\begin{align}
    \begin{split}
        \Phi_{\delta,\epsilon}\colon M \coloneqq \widetilde{\bbt}^{q-1}\times &[-T_\eta +\epsilon\,,\, T_\eta-\epsilon] \times \bbt^{m-q} \\
        &\xrightarrow{f_\delta\times h_\epsilon\times \id} \bbs^{q-1} \times \bbs^1 \times \bbt^{m-q} \xrightarrow{\phi_{m-1}\circ\dots\circ\phi_{q-1}} \bbs^{m}
    \end{split}
\end{align} 
is constant near boundary and of degree one. The maps $f_\delta, h_\epsilon,\phi_j$ can be chosen so that $\supp(\di\Phi_{\delta,\epsilon}) = \widetilde{\bbt}^{q-1}\times [-2\eta,2\eta] \times \bbt^{m-q}$.
Consider the integrable subbundle $F\subseteq TM$ and the metric $g^F$ defined by
\begin{equation}
F= T(\widetilde{\bbt}^{q-1}\times [-T_\eta +\epsilon, T_\eta-\epsilon]),\quad g^F_{\eta} = \varphi_\eta^2g_{\widetilde{\bbt}^{q-1}}\oplus \di t^2,
\end{equation} 
where $g_{\widetilde{\bbt}^{q-1}}$ is a flat metric on $\widetilde{\bbt}^{q-1}$. Then 
\begin{equation}
    k^{F}_{\eta} = f^2_\eta\, q(q-1)
    \quad\text{and}\quad
    \dist_F(\supp(\di\Phi_{\delta,\epsilon}),\partial M) = T_\eta -\epsilon -2\eta.
\end{equation}
Taking sufficiently small $\eta,\delta,\epsilon>0$, we obtain the optimality of \eqref{1.5} since
\begin{equation}
    \lim_{\delta\to 0^+}\sup_M|\wedge^2\!(\di\Phi_{\delta,\epsilon}|_F)\,|_{\tr} = 0.
\end{equation}

\end{rem}

For the next theorem, we temporarily assume the integrable subbundle $F$ is transverse to $\partial M$. In this case, we can talk about a leaf $\call'$ of $(M,F)$, which is a smooth, connected, immersed submanifold with smooth boundary $\partial \call' = \call' \cap \partial M$; see \cite[Ex.~1.2.21]{cc00}. Now, the leafwise scalar curvature $k^F\in C^\infty(M)$ is well-defined on $M$. Let $\{e_j\}_{j=1}^q$ be a local orthonormal frame of $(F,g^F)$ such that $e_1|_{\partial M}=\nu$ is the inward unit normal associated with $g^F$. Recall that the \emph{leafwise boundary mean curvature} $H^F\in C^\infty(\partial M)$ is defined as the mean curvature of the leaves at the boundary; that is, on a leaf $\call'$ of $(M,F)$,
\begin{equation}\label{h0}
    H^F|_{\partial \call'} = \frac{1}{q-1}\sum_{j=2}^q\inn{\nabla^{F}_{e_j}e_j,\nu},
\end{equation}
where $\nabla^{F}$ denotes the Levi-Civita connection on $\call'$ associated with $g^F|_{\call'}$. 

We also partially generalize \cite[Thm.~1.4]{cz24} and \cite[Thm.~1.5]{shi25-1} to foliations:
\begin{thm}\label{t1.2-0}
Let $M$ be an $m$-dimensional, compact, connected, oriented manifold with $\partial M\ne \varnothing$. Let $F\subseteq TM$ be an integrable subbundle of rank $q\ge 2$ that is transverse to $\partial M$. Assume either $TM$ or $F$ is spin. Let $g^F$ be a smooth metric on $F$. Suppose $\Phi\colon M\to\bbs^m$ is a smooth map and $L\subseteq M$ is a compact subset such that
\begin{enumerate}[label=\textup{(\roman*)},ref=\textup{\roman*}]
    \item $L\subseteq \supp(\di\Phi) \subseteq \sat_M(L)$ and $\dist_F(\supp(\di\Phi),\partial M) > l>0$;\label{t1.20i}
    \item $k^F|_L > 2\,|\wedge^2\!(\di\Phi|_F)\,|_{\tr}$ and $k^F|_{\supp(\di\Phi)\setminus L} \ge 2\,|\wedge^2\!(\di\Phi|_F)\,|_{\tr}$;\label{t1.20ii}
    \item $k^F|_{M \setminus \supp(\di\Phi)} \geq \alpha^2 q(q-1)$ and $H^F|_{\partial M} \geq -\alpha\tan\ket{\frac{\alpha q l}{2}}$\label{t1.20iii}
\end{enumerate}
for some $\alpha>0$ and $l\in (0,\frac{\pi}{\alpha q})$. Then $\deg\Phi = 0$.
\end{thm}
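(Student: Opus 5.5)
We argue by contradiction and assume $\deg\Phi\ne0$. The plan combines three ingredients: the Connes fibration / sub-adiabatic deformation of Zhang \cite{zhang17,zhang20} and Su--Wang--Zhang \cite{swz22}, which reduces leafwise scalar curvature to the Lichnerowicz formula on a suitable manifold; the Cecchini--Zeidler potential trick with a boundary condition \cite{cz24}; and, in odd dimensions, the spectral-flow or $\bbs^1$-product trick of \cite{shi25-1}. Throughout we may assume $m$ is even: otherwise we replace $M$ by $M\times\bbs^1$, $F$ by $F\oplus T\bbs^1$ and $\Phi$ by the composition with a degree-one map $\bbs^m\times\bbs^1\to\bbs^{m+1}$ (taking a large circle so the curvature conditions persist up to an arbitrarily small error).

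\textbf{Geometric setup.} Choose a complement $F^\perp$ of $F$ and the metric $g^{F}\oplus \beta^{-2}g^{F^\perp}$. To preserve the spin condition when only $F$ is spin, pass to the Connes fibration $\pi\colon\mathcal{M}\to M$ of Euclidean metrics on $F^\perp$, where the vertical and horizontal transverse bundles are spin. As in \cite{swz22}, we build a twisted sub-Dirac operator
\[
D^{E}_{\beta,\varepsilon}\quad\text{acting on } S(F)\otimes S(\mathcal{F}_1^\perp\oplus\mathcal{F}_2^\perp)\otimes \Phi^*S(T\bbs^m).
\]
The Lichnerowicz formula for this operator gives a curvature term of at least
\[
\tfrac{k^F}{4}-\tfrac12|\wedge^2(\di\Phi|_F)|_{\tr}+O(\beta)+O(\varepsilon),
\]
by Llarull's estimate applied only to the $F$-directions. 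The transverse contributions are killed by the $\beta,\varepsilon$ scaling.

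\textbf{Boundary and potential.} Define the leafwise distance function $x\mapsto \dist_F(x,\supp(\di\Phi))$ and smooth it. Build a potential $f$ which equals $\frac{q\alpha}{2}\tan(\frac{\alpha q}{2}(\cdot))$-type at the level $l$ and is supported outside $\supp(\di\Phi)$, following \cite[Sect.~3]{cz24}. Then add the term $f\cdot\epsilon$ using the grading of $\Phi^*S(T\bbs^m)$ twisted against the constant bundle; outside $\supp(\di\Phi)$ the bundle is trivialized because $\Phi$ is locally constant there. Impose the chirality (local) boundary condition $c(\nu)\epsilon\,s=s$ on $\partial\mathcal{M}$. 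The Fredholm index of this boundary-value problem equals $\deg\Phi\cdot\chi(\bbs^m)/2$, up to sign, which is nonzero. This is computed by the relative index theorem, deforming to the case where $f$ is large near the boundary.

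\textbf{Integration by parts.} The boundary term contributes $\frac{q-1}{2}H^F - f$, appropriately normalized. Condition (iii) on $H^F$ together with the choice $f|_{\partial}=\frac{q\alpha}{2}\tan(\frac{\alpha q l}{2})$ makes this term nonnegative. In the interior, outside $\supp(\di\Phi)$, the ODE
\[
f^2-|\nabla^F f|\ \ge\ \text{const}
\]
combined with $k^F\ge\alpha^2q(q-1)$ yields nonnegativity, using the leafwise Friedrich-type inequality $\frac{q}{q-1}$. This is the origin of $\alpha q$ rather than $\alpha\sqrt{q(q-1)}$. On $\supp(\di\Phi)$, condition (ii) gives nonnegativity, with strict positivity on $L$. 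A harmonic section must therefore vanish on $L$; by leafwise unique continuation along leaves it then vanishes on $\sat(L)\supseteq\supp(\di\Phi)$. Since it is parallel-like elsewhere, it vanishes identically, contradicting nonzero index.

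\textbf{Main obstacle.} The main difficulty is making the error terms $O(\beta)$, $O(\varepsilon)$ from the adiabatic limit and the derivatives of $f$ transverse to $F$ negligible. We need $f$ to depend only on leafwise distance, which is merely Lipschitz in the transverse direction. We would handle this by smoothing $f$ and scaling the transverse metric by $\beta^{-2}$, so that transverse gradients contribute $O(\beta)$. The strict inequality on $L$ then absorbs these errors via a compactness argument on $L$, while on the rest we rely on unique continuation rather than strictness.
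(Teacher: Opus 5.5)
Your overall architecture matches the paper's: the Connes fibration with Zhang's deformation $\beta^{-1}V$, the Cecchini--Zeidler potential $\psi\vartheta$ with the local boundary condition $c(\nu)\vartheta u=u$, and the leafwise factor $\frac{q}{q-1}$ obtained from the Penrose operator. \textbf{The gap is in how you use the non-strict hypotheses.} In the adiabatic setting there is no kernel section with a nonnegative Weitzenb\"ock term to which unique continuation could be applied. For fixed $\beta,\varepsilon,r$, the zeroth-order term on $\calm_r$ is the paper's $\calq_\bv$ of \eqref{qbv}, which is $\beta^{-2}$ times the original curvature expression. It comes with errors $O(\frac{1}{\beta^2 r})+O_r(\frac1\beta+\frac{\varepsilon}{\beta^2})$ of indefinite sign. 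What must be proved is the uniform bound $\|\calb u\|\ge C_0\|u\|$, and this needs a strictly positive lower bound of order $\beta^{-2}\int|u_1|^2$ on all of $\calm_r$ (and of order $\beta^{-1}$ on the boundary), not only near $L$.

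Wherever you only have $\ge 0$, the errors win, and ``vanish on $L$, then propagate along leaves'' has nothing to act on. This happens on $\supp(\di\Phi)\setminus L$. It also happens on $M\setminus\supp(\di\Phi)$ and at $\partial M$: with $f=\frac{\alpha q}{2}\tan(\frac{\alpha q t}{2})$ and boundary value exactly $\frac{\alpha q}{2}\tan(\frac{\alpha q l}{2})$, both $\frac{q}{q-1}\frac{k^F}{4}+f^2-|\nabla f|$ and $\frac q2H^F+f$ are borderline.

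The paper handles the boundary and the region off $K$ with the margins $\eta<1$ and $l+\varpi$. These give $\calq_\bv\ge\frac{\alpha^2q^2(1-\eta^2)}{4\beta^2}$ off $K$ and \eqref{h1} on $\partial\ww$. For $\supp(\di\Phi)\setminus L$ it replaces your unique-continuation step by a quantitative version, the foliated Poincar\'e inequality of \cref{lem2}: if $K\subseteq\sat(U)$, then
$\int_{\bar\pi^{-1}(K)}\beta^{-2}|u|^2\le C(\sum_j\int|(\nabla_{e_j}+A_{e_j})u|^2+\int_{\bar\pi^{-1}(U)}\beta^{-2}|u|^2)$
with $C$ independent of $\beta,\varepsilon,r$. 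It is applied with $A_X=-\frac1q c_\bv(X)(\psi_\beta\vartheta+\beta^{-1}V)$. The leafwise gradient term is controlled by the Penrose terms and $\frac1q\|\calb u_1\|^2$ through \eqref{penrose} and \cref{lem3}. Here $U$ is a neighbourhood of $L'=L\cup\partial K$, on which $\calq_\bv\ge a\beta^{-2}$. This quantitative leafwise propagation is the missing idea.

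Several further steps also need repair.
\begin{itemize}
\item The leafwise distance $x\mapsto\dist_F(x,\supp(\di\Phi))$ is in general only lower semicontinuous and may be infinite; it is not transversally Lipschitz, so it cannot simply be smoothed. The paper instead uses a $1$-Lipschitz function for $g_{\beta_0}=g^F\oplus\beta_0^{-2}g^{F^\perp}$. Such a function exists because $\dist_F(K,\partial W)>l$ forces $\dist_{\beta_0}(K,\partial W)\ge l+2\varpi$ for small $\beta_0$ \cite[Prop.~2.1]{chen}.
\item Your parity reduction $F\mapsto F\oplus T\bbs^1$ raises the rank to $q+1$. The Friedrich factor then becomes $\frac{q+1}{q}$, and $\frac{q+1}{q}\cdot\frac{\alpha^2q(q-1)}{4}<\frac{\alpha^2q^2}{4}$. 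The normalization of $H^F$ also changes. So the hypotheses stated with $\alpha q$ no longer suffice. The paper keeps the rank-$q$ foliation and adds circle factors via $M^\lozenge$ in directions whose metric is not scaled by $\beta$.
\item Once the $\frac{q}{q-1}$ factor is used, it also multiplies the boundary contribution. The boundary term is therefore $\frac q2H^F+f$, not $\frac{q-1}{2}H^F\pm f$.
\item The Connes fibration is needed even when $TM$ is spin. It makes the foliation almost isometric, which is what keeps the transverse curvature contributions lower order.
\end{itemize}
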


Similar to \cite[Cor.~1.5]{cz24}, \cref{t1.2-0} implies \cref{rem1}\,(1) since $ - \alpha\tan\ket{\frac{\alpha q l}{2}}$ tends to $-\infty$ as $l\to {\frac{\pi}{\alpha q}}$.

\subsection{A quantitative version of a theorem of Su-Wang-Zhang} In \cite{swz22}, Su, Wang and Zhang proved the following theorem which resolves Gromov's sharp foliated $\otimes_\varepsilon$-twisting conjecture. 

\begin{thm}[{\cite[Thm.~1.2]{swz22}}]\label{t1.3}
Let $(M,g)$ be an $m$-dimensional, noncompact, connected, oriented, complete Riemannian manifold without boundary. Let $F\subseteq TM$ be a smooth integrable subbundle of rank $q \ge 2$. Define $g^F = g|_F$. Let $\Phi: M\to \bbs^m$ be a smooth map that is area non-increasing along $F$, locally constant near infinity and of nonzero degree. If either $TM$ or $F$ is spin and 
$$k^F>q(q-1)\quad {\rm on}\ {\rm supp}({\rm d}\Phi),$$
then we have $\inf_M k^F<0$.
\end{thm}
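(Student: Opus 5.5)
We argue by contradiction: suppose $k^F\ge 0$ on all of $M$. The aim is to build a Fredholm operator whose index equals a nonzero multiple of $\deg\Phi$, and then to show that the scalar curvature assumptions force it to be invertible.

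\textbf{Step 1: reduction to a compact region.} Since $\Phi$ is locally constant near infinity, there is a compact domain $K\subseteq M$ with smooth boundary such that $\supp(\di\Phi)\subseteq\inter K$ and $\Phi$ is constant on each component of a collar of $\partial K$. By compactness of $\supp(\di\Phi)$ and the strict inequality $k^F>q(q-1)$ there, there is $\delta>0$ with
\begin{equation*}
k^F\ge q(q-1)+\delta \quad\text{on } \supp(\di\Phi).
\end{equation*}

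\textbf{Step 2: Connes fibration and twisted sub-Dirac operator.} Following Connes and Zhang, we pass to the Connes fibration $\pi\colon\calm\to M$, whose fibre over $x$ is the space of Euclidean metrics on $T_xM/F_x$. On $\calm$ the lifted foliation is almost isometric, and $T\calm$ splits as $F\oplus\calf_1^\perp\oplus\calf_2^\perp$. On this space we consider the sub-Dirac operator built from the spinors of $F$ (or of $TM$) and $\calf_1^\perp$, together with the exterior algebra of $\calf_2^\perp$. We twist it by $\pi^*\Phi^*S(\bbs^m)\oplus\pi^*\Phi^*S(\bbs^m)$, graded oppositely, and glue the two copies by the odd endomorphism given by Clifford multiplication by the constant value of $\Phi$ near $\partial K$. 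This is a relative/Callias-type construction in the spirit of Gromov--Lawson and Zhang.

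\textbf{Step 3: deformations.} We rescale the metric to $\beta^2g^F\oplus g^{\calf_1^\perp}/\varepsilon^2\oplus g^{\calf_2^\perp}$, cut off near the boundary of the Connes fibre (which is non-compact) and near $\partial K$, and add a potential $f\cdot V$. Here $f$ is a cutoff function, constructed using completeness of $g$, whose gradient is as small as we like on a very large ball. The index of the resulting operator is independent of these parameters. A relative index theorem computes it as a nonzero multiple of $\deg\Phi\cdot\chi(\bbs^m)$; in odd dimensions one passes first to $M\times\bbs^1$ or uses a spectral flow argument.

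\textbf{Step 4: Lichnerowicz estimate.} The Lichnerowicz-type formula yields a leafwise term $k^F/(4\beta^2)$. The twisting curvature is bounded below, via Llarull's algebraic estimate, by $-\frac{1}{2\beta^2}|\wedge^2(\di\Phi|_F)|_{\tr}$. Area non-increasing along $F$ gives $|\wedge^2(\di\Phi|_F)|_{\tr}\le q(q-1)/2$, so on $\supp(\di\Phi)$ we obtain positivity at least $\delta/(4\beta^2)$. Off $\supp(\di\Phi)$ the twisting term is absent and $k^F\ge 0$; there the potential term $f^2V^2$ minus $|\nabla f|$ supplies uniform positivity. The transverse error terms are $O(\varepsilon/\beta^2+\beta)$ after the adiabatic limit.

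\textbf{Step 5: conclusion.} Taking $\beta,\varepsilon$ suitably small shows the operator has trivial kernel and cokernel, so its index is zero. This contradicts Step 3, since $\deg\Phi\neq 0$.

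\textbf{Main obstacle.} I expect the hardest part to be Step 4. The estimates must be made uniform on the non-compact Connes fibre and near $\partial K$, and the mixed terms (derivative of $f$, boundary cutoffs, second fundamental form terms of $\calf_1^\perp$) must be absorbed by the small margin $\delta/(4\beta^2)$ without destroying positivity on the region where only $k^F\ge 0$ is available. I would first try to choose the potential cutoff with leafwise gradient of order $\beta^{-1}\cdot o(1)$ and tune $\varepsilon=\beta^3$, checking the error terms one by one.
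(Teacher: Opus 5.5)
There is a genuine gap in Step 4, at exactly the point you flagged. Your overall architecture is the right one: the Connes fibration with the rescaled metric, a Gromov--Lawson pair built from $\Phi^*S(T\bbs^m)$, Llarull's estimate together with $|\wedge^2(\di\Phi|_F)|_{\tr}\le q(q-1)/2$, and a vanishing-index contradiction. The failure is in your claim that off $\supp(\di\Phi)$, where you only use $k^F\ge 0$, the term $f^2-|\nabla f|$ supplies uniform positivity.

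Your potential $f\cdot V$ must vanish wherever the gluing isomorphism is not defined, in particular on $\supp(\di\Phi)$. Hence just outside $\supp(\di\Phi)$ you have $f^2-|\nabla f|\le f^2\to 0$, and this quantity is negative wherever $f$ is small and $\nabla f\ne 0$. No choice of $f$ makes it uniformly positive there. Making $|\nabla f|$ small over a very large ball does not help either: it only spreads a small negative contribution over a large region, where your sole curvature input is $k^F\ge 0$. So the claimed lower bound fails in the transition zone.

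The missing idea is to use the strict inequality on $\supp(\di\Phi)$ once more, via continuity.
\begin{itemize}
\item There is a neighborhood $N$ of $\supp(\di\Phi)$ with $k^F\ge c>0$ on $N$, and the twisting curvature vanishes on $N\setminus\supp(\di\Phi)$.
\item Switch the potential on inside $N$, with $|\di\psi_\beta|$ dominated by $k^F/(4\beta^2)$ there.
\item Keep the potential constant, of size about $c'/\beta$, outside $N$. Then no gradient term remains, and $\psi_\beta^2$ gives positivity of order $\beta^{-2}$.
\end{itemize}
The small-gradient cutoffs coming from completeness belong to the localization partition of unity $\phi_{j,1},\phi_{j,2}$, not to the potential.

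This is exactly how the paper proceeds. The statement is obtained from \cref{t1.5} with $L=\supp(\di\Phi)$. That proof chooses $U$ with $k^F\ge\alpha^2q(q-1)$ on $\overline U\setminus\supp(\di\Phi)$ and applies \cref{t1.4} with $W=\overline U$. There the profile $\psi=\frac{\alpha q\eta}{2}\tan\bket{\frac{\alpha q\eta t}{2}}$ satisfies $\psi^2-\psi'=-\frac{\alpha^2q^2\eta^2}{4}$. This is absorbed by the $\frac{q}{q-1}\cdot\frac{k^F}{4}$ term from the Penrose-operator Lichnerowicz formula, and $\psi$ is constant outside $W$. For this qualitative statement, the plain $k^F/4$ term with any sufficiently flat profile inside $N$ would suffice. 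The $\frac{q}{q-1}$ refinement and the foliated Poincar\'e inequality are only needed for the sharper \cref{t1.4,t1.5}.

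There are also some secondary corrections.
\begin{itemize}
\item In the sub-Dirac construction, the exterior algebra must sit on $\calf_1^\perp=\bar\pi^*F^\perp$, which need not be spin when only $F$ is. The spinors go on $\calf$ and on the vertical bundle $\calf_2^\perp$.
\item You need the Bismut--Zhang bundle $\xi$ with the term $\beta^{-1}V^\xi$ to localize near $\iota(M)$. Cutting off in the Connes fibre alone gives no control there.
\item In the case where only $F$ is spin, the index computation goes through Zhang's $\widehat L(F^\perp)\ch(\phi(F^\perp))$ argument, not a direct relative-index count.
\item The twisting should be the Gromov--Lawson pair $E_\pm=\Phi^*S_\pm(T\bbs^m)$. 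Two identical copies of $\Phi^*S(\bbs^m)$ with opposite gradings give $\ch(E_+-E_-)=0$.
\end{itemize}
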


We will prove the following quantitative version of \cref{t1.3}, which is also a foliated version of \cite[Thm.~1.9]{shi25-1}; cf.\ \cite[Thm.~3.1]{z20}. As in \cite[Prop.~5.5]{z20}, \cref{t1.2} is a corollary of the following \cref{t1.4}.

\begin{thm}\label{t1.4}
Let $(M,g)$ be an $m$-dimensional, noncompact, connected, oriented, complete Riemannian manifold without boundary. Let $F\subseteq TM$ be a smooth integrable subbundle of rank $q \ge 2$. Define $g^F = g|_F$. Assume either $TM$ or $F$ is spin. Let $\Phi \colon M\to\bbs^m$ be a smooth map. Suppose $W\subseteq M$ is a codimension-$0$, compact, embedded  submanifold with boundary and $L\subseteq M$ is a compact subset such that 
\begin{enumerate}[label=\textup{(\roman*)},ref=\textup{\roman*}]
    \item $L\subseteq \supp(\di\Phi) \subseteq W\cap \sat(L)$ and $\deg\Phi \ne 0$;\label{t1.4i}
    \item $k^F|_L > 2\,|\wedge^2\!(\di\Phi|_F)\,|_{\tr}$ and $k^F|_{\supp(\di\Phi)\setminus L} \ge 2\,|\wedge^2\!(\di\Phi|_F)\,|_{\tr}$;\label{t1.4ii}
    \item $k^F|_{W \setminus \supp(\di\Phi)} \geq \alpha^2 q(q-1)$ and $\dist_F(\supp(\di\Phi),\partial W) \ge l$\label{t1.4iii}
\end{enumerate}
    for some $\alpha>0$ and $l\in (0,\frac{\pi}{\alpha q})$. Then
    \begin{equation}
        \inf_{M} k^F < -\alpha^2 q(q-1)\tan^2\!\bbket{\frac{\alpha q l }{2}}.
    \end{equation}
\end{thm}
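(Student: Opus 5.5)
We argue by contradiction. Suppose $\inf_M k^F \ge -\sigma^2$ with $\sigma \coloneqq \alpha\sqrt{q(q-1)}\tan\bigl(\frac{\alpha q l}{2}\bigr)$. We will run the Connes fibration / sub-Dirac operator machinery of \cite{swz22,zhang17,zhang20}, but with a Callias-type potential in place of a compactly supported cut-off, as in \cite{cz24,shi25-1,z20}.

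\textbf{Step 1 (the potential).} Since $\sat(L)\supseteq\supp(\di\Phi)$ and $\dist_F(\supp(\di\Phi),\partial W)\ge l$, define a leafwise distance function $x\mapsto\dist_F(x,\supp(\di\Phi))$ on $W$. We smooth it to a function $\rho$ that is leafwise $1$-Lipschitz up to $\epsilon$, so that the leafwise gradient satisfies $|\di^F\rho|\le 1+\epsilon$. From $\rho$ we build
\[
\psi=\frac{\alpha q}{2}\tan\Bigl(\frac{\alpha q}{2}\rho\Bigr)
\]
on the neck $\{0\le\rho\le l\}$. On $\supp(\di\Phi)$ we set $\psi=0$, and past $\rho=l$ we extend $\psi$ as a constant of size about $\frac{\alpha q}{2}\tan(\frac{\alpha ql}{2})$. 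The function $\psi$ satisfies the Riccati-type inequality
\[
\frac{q}{q-1}\cdot\frac{\alpha^2q(q-1)}{4}+\psi^2-|\di^F\psi|\ge 0 ,
\]
with the constant chosen so that the region $M\setminus W$, where only the bound $k^F\ge-\sigma^2$ is available, is compensated by $\frac{q}{q-1}\psi^2$ there.

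\textbf{Step 2 (the operator).} We pass to the Connes fibration $\pi\colon\mathcal M\to M$ of transverse metrics. On it we take the adiabatically rescaled metric $\beta^2 g^{F}\oplus\varepsilon^{-2}g^{\text{transverse}}$ and the sub-Dirac operator $D^{\calf}_{\beta,\varepsilon}$ twisted by the Bott-type bundle $E=\Phi^*(S^+_{\bbs^m}\oplus S^-_{\bbs^m})$ (with the odd-dimensional/doubling trick when $m$ is odd). To this we add the Callias potential $\psi\,\widehat{\ }$ acting via the grading of the auxiliary Clifford module. Far from $\supp(\di\Phi)$, $E$ is trivial with flat connection, so the index of
\[
D_\psi = D^{E}_{\beta,\varepsilon}+\psi\,\varepsilon_0
\]
is Fredholm on the complete manifold and equals $\deg\Phi$ times the rank factor. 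This follows from a relative index theorem comparing with the untwisted operator, as in \cite[Thm.~1.2]{swz22}, and it is nonzero.

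\textbf{Step 3 (the estimate).} Using the Lichnerowicz formula of \cite{zhang17,swz22} together with the Llarull-type pointwise estimate
\[
\inn{\mathcal R^E s,s}\ge -\tfrac12|\wedge^2(\di\Phi|_F)|_{\tr}|s|^2
\]
(\cite{ll98}-style, via the singular values $\lambda_j$), we obtain, after Friedrich-type refinement with the $\frac{q}{q-1}$ factor from \cite{cz24}, the bound
\[
\|D_\psi s\|^2\ge\int\Bigl(\tfrac{q}{4(q-1)}(k^F-2|\wedge^2|_{\tr})+\psi^2-|\di^F\psi|-O(\beta)-O(\varepsilon)\Bigr)|s|^2 .
\]
This is nonnegative by hypotheses (ii), (iii) and Step 1. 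Hence any kernel element satisfies $D_\psi s=0$ with all terms vanishing identically. The strict inequality on $L$ forces $s=0$ on $L$. A leafwise unique continuation argument then propagates this along leaves, using that $s$ is parallel in the leafwise sense where the estimate is sharp, giving $s=0$ on $\sat(L)\supseteq\supp(\di\Phi)$. On the complement $E$ is trivial, and the vanishing spreads by unique continuation, so $s=0$. This contradicts $\ind\ne0$.

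\textbf{Main obstacle.} The principal difficulty is Step 3's error terms. In the foliated setting the curvature estimate contains transverse contributions controlled only after the adiabatic limit, and $\psi$ is built from a merely leafwise-Lipschitz distance, so $\di\psi$ has transverse components that are unbounded in $\varepsilon$. We would try to absorb them by choosing the Connes-fibration scaling so that the transverse Clifford terms carry a factor $\varepsilon$, and by letting $\psi$ depend only on $\pi$-pulled-back leafwise quantities smoothed in transverse directions at scale $\gg\varepsilon$. Handling the equality case (non-strict curvature off $L$) via the saturation hypothesis is the second delicate point. There we would use the rigidity argument sketched above rather than a perturbation.
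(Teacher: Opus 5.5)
Your setup (contradiction, Connes fibration, a Callias potential with the $\tan$ profile, the $\frac{q}{q-1}$ Penrose refinement, a relative index computation) is essentially the paper's. The gap is in Step~3: the rigidity/unique-continuation argument cannot work. On the Connes fibration the Lichnerowicz/Penrose formulas hold only modulo errors of order $O(\beta^{-1}+\varepsilon\beta^{-2})+O(\beta^{-2}r^{-1})$, against leading terms of order $\beta^{-2}$ (cf.\ \eqref{6.17}, \eqref{lich}--\eqref{cur8}). Wherever the hypotheses are non-strict, your main term is only $\ge 0$. This happens on $\supp(\di\Phi)\setminus L$. With your $\psi$ it also happens on the whole neck, where $\frac{\alpha^2q^2}{4}+\psi^2-\psi'\equiv 0$; this can even turn negative once only $|\di^F\rho|\le 1+\epsilon$ is known. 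It happens as well on $M\setminus W$, where $\frac{q}{4(q-1)}k^F+\psi^2\ge 0$ is sharp under the contrary assumption. So your integrand is not nonnegative. A kernel element only gives $a\int_L|s|^2\lesssim(\text{errors})\,\|s\|^2$, not $s|_L=0$, and there is no exact leafwise parallelism to propagate.

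What is missing is a quantitative replacement: the leafwise Poincaré-type inequality \cref{lem2}. Its constant is independent of $\beta,\varepsilon$ because $(\calm,\calf)$ is almost isometric. It is applied with $A_X=-q^{-1}c_{\beta,\varepsilon}(X)(\psi_\beta\vartheta+\beta^{-1}V)$. Together with \eqref{penrose} and \cref{lem3}, which bound $\sum_i|(\nabla_{e_i}+A_{e_i})u_1|^2$ by the Penrose terms plus $\frac1q|\calb u_1|^2$, it turns strict positivity near $L$ and $K\subseteq\sat(U)$ into a uniform bound $C_3\beta^{-2}$ on all of $\bar\pi^{-1}(K)$ (see \eqref{u2}--\eqref{u5}). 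One then proves $\|\calb u\|\ge C_0\|u\|$, i.e.\ invertibility, rather than analysing the kernel.

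Absorbing the errors also needs slack of order $\beta^{-2}$ on the neck and outside $W$, and your construction provides none. Your $\rho$ is a further problem: $x\mapsto\dist_F(x,\supp(\di\Phi))$ is in general not continuous, since it is $+\infty$ on leaves missing $\supp(\di\Phi)$, which may accumulate on it. Transverse smoothing destroys the leafwise Lipschitz bound. The transverse part of $\di\psi$ is not the real issue: for a function Lipschitz for a fixed metric on $M$ it carries a factor $\varepsilon$ in $g_{\beta,\varepsilon}$.

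The paper instead uses \cite[Prop.~2.1]{chen}. If $\dist_F(\supp(\di\Phi),\partial W)>l$, then $\dist_{\beta_0}\ge l+2\varpi$ for the complete metric $g_{\beta_0}=g^F\oplus\beta_0^{-2}g^{F^\perp}$. Its smoothed distance $\widetilde y$ is $1$-Lipschitz for every $g_\beta$ with $\beta\le\beta_0$. Then $\psi_\beta=\psi(\widetilde y\circ\widetilde{\pi}_r)/\beta$, with $\eta<1$ as in \eqref{greaterthan}, satisfies \eqref{conpsi2} and yields the strict bounds \eqref{q4}--\eqref{q5}. This requires strict $\dist_F>l$ and proves only \cref{leaf30}.

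The stated case $\dist_F\ge l$ is reduced to \cref{leaf30} by \cref{cfm}. Under the contrary assumption $k^F\ge -a$ with $a=\alpha^2q(q-1)\tan^2(\frac{\alpha ql}{2})$, one has $k^F|_W\ge 0>-a$. So the conformal change $e^{2tf}g^F$ with $f=0$ near $W$ gives $\inf\widetilde k^F\ge-\alpha^2q(q-1)\tan^2(\frac{\alpha ql_0}{2})$ for some $l_0<l$, without touching the neck. Applying \cref{leaf30} with $l_0<l\le\dist_F(\supp(\di\Phi),\partial W)$ then gives the contradiction.
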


Then \cref{t1.4} implies the following theorem that generalizes \cref{t1.3}. 

\begin{thm}\label{t1.5}
    Let $(M,g)$ be an $m$-dimensional, noncompact, connected, oriented, complete Riemannian manifold without boundary. Let $F\subseteq TM$ be an integrable subbundle of rank $q\ge 2$ equipped with $g^F = g|_F$. Assume either $TM$ or $F$ is spin. Let $\Phi\colon M\to\bbs^m$ be a smooth map that is locally constant at infinity and of nonzero degree. Suppose $L$ is a compact subset such that
    \begin{enumerate}[label=\textup{(\roman*)},ref=\textup{\roman*}]
        \item $\partial(\supp(\di\Phi))\subseteq L \subseteq \supp(\di\Phi)\subseteq \sat(L)$;\label{t1.8i}
        \item $k^F|_L > 2\,|\wedge^2\!(\di\Phi|_F)\,|_{\tr}$ and $k^F|_{\supp(\di\Phi)\setminus L} \ge 2\,|\wedge^2\!(\di\Phi|_F)\,|_{\tr}$.\label{t1.8ii}
    \end{enumerate}
    Then we have $\inf_{M} k^F < 0$.
\end{thm}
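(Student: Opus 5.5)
We argue by contradiction and deduce \cref{t1.5} from \cref{t1.4}. Suppose $\inf_M k^F\ge 0$. Set $K\coloneqq\supp(\di\Phi)$, which is compact because $\Phi$ is locally constant at infinity. We must produce $W$, $\alpha$ and $l$ so that hypotheses \eqref{t1.4i}--\eqref{t1.4iii} of \cref{t1.4} hold. \cref{t1.4} then gives $\inf_M k^F<-\alpha^2q(q-1)\tan^2(\alpha q l/2)<0$, which is the desired contradiction. (In fact it already contradicts $\inf_M k^F\ge0$ directly.)

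\textbf{Choosing $\alpha$.} On $W\setminus K$ we only know $k^F\ge 0$, so a positive lower bound $\alpha^2 q(q-1)$ has to come from continuity near $K$. By \eqref{t1.8i} we have $\partial K\subseteq L$, and \eqref{t1.8ii} gives $k^F>2|\wedge^2(\di\Phi|_F)|_{\tr}\ge0$ on the compact set $\partial K$. Hence there is $c>0$ with $k^F\ge 2c$ on $\partial K$. By continuity and compactness, there is an open neighbourhood $U$ of $\partial K$ on which $k^F\ge c$. Take $\alpha>0$ with $\alpha^2q(q-1)\le c$.

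\textbf{Choosing $W$ and $l$.} Points of $M\setminus K$ close to $K$ are close to $\partial K$. So, for small $r>0$, the closed metric $r$-neighbourhood $N_r$ of $K$ satisfies $N_r\setminus K\subseteq U$. This follows from compactness: if not, there would be points $x_n\notin K\cup U$ with $\dist(x_n,K)\to 0$, and a limit point would lie in $\partial K\setminus U$, which is impossible. Smoothing the distance function, we pick a codimension-$0$ compact embedded submanifold $W$ with $K\subseteq \inter W$ and $W\subseteq N_r$; for instance, take a regular sublevel set of a smooth approximation of $\dist(\cdot,K)$ between levels $r/3$ and $2r/3$. Then $k^F\ge\alpha^2q(q-1)$ on $W\setminus K$. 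Since $g^F=g|_F$, leafwise curves have length at least the Riemannian distance. Therefore $\dist_F(K,\partial W)\ge \dist(K,\partial W)\ge r/3>0$. Set $l\coloneqq\min\{r/3,\ \pi/(2\alpha q)\}\in(0,\pi/(\alpha q))$.

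\textbf{Checking the hypotheses.} Hypothesis \eqref{t1.4i} holds because $L\subseteq K\subseteq W\cap\sat(L)$ and $\deg\Phi\ne0$. Hypothesis \eqref{t1.4ii} is exactly \eqref{t1.8ii}. Hypothesis \eqref{t1.4iii} was arranged in the two steps above.

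\textbf{Main obstacle.} The only delicate point is the choice of $\alpha$: $k^F$ has to be uniformly positive on a full collar $W\setminus K$ around $K$. This is exactly where the assumption $\partial(\supp(\di\Phi))\subseteq L$ is used, since it supplies strict positivity on $\partial K$. The rest of the argument is the compactness argument given above.
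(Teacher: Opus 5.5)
Your proposal is correct and follows essentially the same route as the paper: strict positivity of $k^F$ on $\partial(\supp(\di\Phi))$ gives a compact, smoothly bounded neighbourhood $W$ of $\supp(\di\Phi)$ with $k^F\ge\alpha^2q(q-1)$ on $W\setminus\supp(\di\Phi)$; then $\dist_F\ge\dist_g>0$ to $\partial W$, and \cref{t1.4} applies. You simply spell out the compactness and regular-sublevel-set details that the paper leaves implicit, and, as you note yourself, the contradiction framing is unnecessary.
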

For example, we can take $L= \supp(\di\Phi)$ if $k^F|_{\supp(\di\Phi)} > 2\,|\wedge^2\!(\di\Phi|_F)\,|_{\tr}$. We can take $L= \partial(\supp(\di\Phi))$ if $k^F|_{\supp(\di\Phi)} \ge 2\,|\wedge^2\!(\di\Phi|_F)\,|_{\tr}$, $k^F|_{\partial(\supp(\di\Phi))} > 0$ and $\supp(\di\Phi) \subseteq \sat\ket{\partial(\supp(\di\Phi))}$.

\subsection{A Llarull-type theorem on foliations}

In \cite{su}, Su partially generalizes  Llarull's theorem \cite{ll98} to foliations. For more on Llarull's theorem, see, e.g., \cite{cwxz26,g23,lsw,li10,wwxz26, wx25} and the references therein. 

Using the techniques from the proof of \cref{t1.2}, we have the following theorem which, in some sense, proves the rigidity part of the Llarull's theorem on closed foliated manifolds in the spin cases.  

\begin{thm}\label{t1.7}
    Let $(M,F)$ be a closed, oriented, foliated manifold. Suppose $m=\dim M$ and $q=\rk F \ge 3$. Assume either $TM$ or $F$ is spin. Let $g^F$ be a smooth metric on $F$ such that $k^F\ge q(q-1)$. If there exists a smooth map $\Phi\colon M\to \bbs^m$ of nonzero degree that is area non-increasing along $F$, then there exists a leaf $\call$ associated with $F$ such that $(\Phi|_\call)^*g_0 = g^F|_\call$, where $g_0$ denotes the standard metric on the unit sphere $\bbs^m$.
\end{thm}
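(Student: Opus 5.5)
We argue by contradiction, in the spirit of the proof of \cref{t1.2}. Suppose that no leaf $\call$ satisfies $(\Phi|_\call)^*g_0=g^F|_\call$. Our aim is to deform the data so that \cref{t1.5} applies, or more directly to rerun its Connes-fibration/sub-Dirac argument on the closed manifold $M$. Since $k^F\ge q(q-1)$ and $\Phi$ is area non-increasing along $F$, all singular values satisfy $\lambda_j\le 1$ except possibly one, and $\lambda_i\lambda_j\le1$ for $i<j$. Hence
\begin{equation*}
2\,|\wedge^2\!(\di\Phi|_F)\,|_{\tr}\le q(q-1)\le k^F .
\end{equation*}
This is the borderline inequality, and equality at a point $x$ forces $\lambda_i\lambda_j=1$ for all $i<j$ and $k^F(x)=q(q-1)$. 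Because $q\ge3$, the relations $\lambda_i\lambda_j=1$ for all pairs force every $\lambda_j=1$. So equality at $x$ means $\di_x\Phi|_F$ is an isometry onto its image; this is exactly where $q\ge3$ is used.

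Let $E\subseteq M$ be the closed set where equality holds, i.e.\ where $(\Phi^*g_0)|_F=g^F$. By the contradiction hypothesis, no leaf is entirely contained in $E$. Every leaf therefore meets the open set $U=M\setminus E$, on which $k^F>2|\wedge^2(\di\Phi|_F)|_{\tr}$. Using compactness of $M$, we choose a compact subset $L\subseteq U$ with $\sat(L)=M$. This is possible because the sets $\sat(K)$ are open, being unions of leaves through open sets, and finitely many compact pieces of $U$ saturate $M$. Then the analogue of conditions \eqref{1.2i}--\eqref{1.2ii} of \cref{t1.2} holds with $\supp(\di\Phi)$ replaced by $M$: we have $L\subseteq M=\sat(L)$, strict inequality on $L$, and the non-strict inequality everywhere.

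The core step reruns the Dirac-operator argument from the proof of \cref{t1.2}. We pass to the Connes fibration and the adiabatic sub-Dirac operator twisted by $\Phi^*$ of the spinor bundle of $\bbs^m$. The deformation estimates of Su--Wang--Zhang give a Lichnerowicz-type lower bound for $D^2$ with potential at least $\tfrac14\big(k^F-2|\wedge^2(\di\Phi|_F)|_{\tr}\big)$, up to errors that tend to zero. Since $\deg\Phi\ne0$, the index of the twisted operator is nonzero by Atiyah--Singer on the closed manifold, in the even case; the odd case is handled by passing to $M\times\bbs^1$ or by suspension as in the paper. Hence there is a harmonic section $s$ with $\|s\|=1$.

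Integrating the Bochner formula gives
\begin{equation*}
0\ge \int |\nabla^F s|^2+\tfrac14\int\big(k^F-2|\wedge^2\|_{\tr}\big)|s|^2-o(1).
\end{equation*}
The main obstacle is to extract a contradiction from this non-strict inequality. The potential is nonnegative and strictly positive only on $L$, so we must show that the mass of $s$ cannot avoid $L$. The first thing to try is a leafwise unique-continuation or Kato-type argument: the leafwise gradient of $|s|$ is controlled, and because $\sat(L)=M$, every point is joined to $L$ along leaves by curves of bounded length, with a uniform bound by compactness. A small leafwise gradient then forces $|s|$ to be nearly constant along such leafwise paths, so the mass of $s$ near $L$ is bounded below. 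Alternatively, we can modify $g^F$ by a small conformal change supported near $L$ that spreads the strict positivity along the leaves while keeping the inequality non-strict elsewhere, as in the treatment of $\sat_M(L)$ in \cref{t1.2}. Either way we obtain a contradiction, so some leaf lies in $E$, and on that leaf $(\Phi|_\call)^*g_0=g^F|_\call$.
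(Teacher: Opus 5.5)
Your proposal follows the paper's route: contradiction, compactness to get an open set $U$ with $\sat(U)=M$ on which $k^F>2\,|\wedge^2\!(\di\Phi|_F)\,|_{\tr}$, the Connes-fibration deformed twisted (sub-)Dirac operator with nonzero index, and a leafwise Poincaré-type inequality to handle the merely non-strict inequality elsewhere. The paper packages this as \cref{main1} and \cref{main11} and invokes $q\ge 3$ only at the end, whereas you use it at the start; your ``Kato-type'' leafwise path argument is exactly \cref{lem2}. That proposition is proved by flowing along compactly supported leafwise vector fields with Gronwall's inequality, with uniformity in $r,\beta,\varepsilon$ over the noncompact fibration coming from a Jacobian lower bound supplied by the almost-isometric structure \eqref{almostiso}, so the conformal-change alternative you mention is unnecessary and is not what the paper does.
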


The rest of this paper is organized as follows. Following \cite[\S5]{connes} (cf.\ \cite[\S2.1]{zhang17} and \cite[\S5.1]{chen}), we first recall the constructions of the Connes fibration in \cref{2.1}. Inspired by \cite[Lem.~2.7--2.8]{wx25}, we prove a Poincaré-type inequality for foliations in \cref{2.2}, which is essential when we only have the condition $k^F|_{\supp(\di\Phi)} \ge 2\,|\wedge^2\!(\di\Phi|_F)\,|_{\tr}$. Then we study closed, foliated manifolds and prove \cref{t1.7} in \cref{3} using deformed sub-Dirac operators \cite{lz}.

For better estimates in the long neck problems on foliated manifolds, we need the Lichnerowicz-type formula with coefficient $\frac{q}{q-1}$ for almost isometric foliations developed in \cite[Thm.~4.11--4.12]{chen}. Moreover, we need to refine the Poincaré-type inequality for foliations using the Penrose operator along the leaves (see \cref{4.1}). Then we prove \cref{t1.2,t1.4,t1.5} in \cref{4.2,4.3,4.4}. 

Finally, by considering the local boundary value problems as in \cite[\S5]{cz24} (cf.\ \cite{bb12,bb16}), we prove \cref{t1.2-0} in \cref{5} using leafwise boundary mean curvature bounds. The following diagram shows the dependency of results in this paper.
\begin{figure}[htbp]
    \centering
    \includegraphics[width=0.9\textwidth]{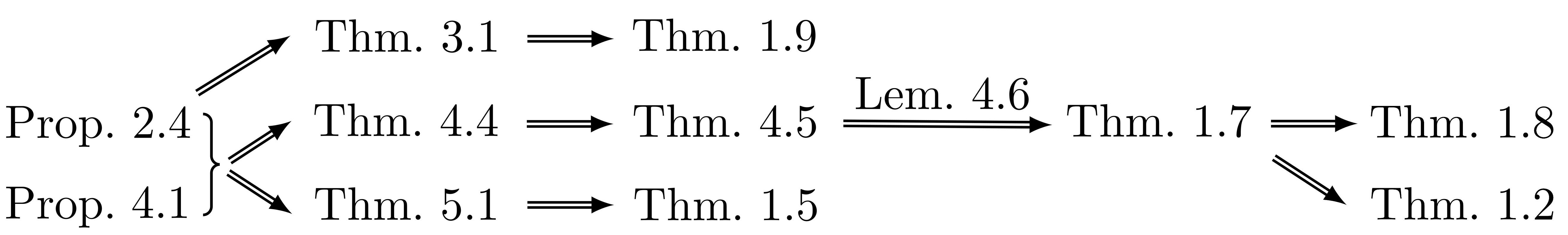}
\end{figure}

\section{Connes Fibration and Poincaré-type Inequality}\label{2}

\subsection{Connes Fibration}\label{2.1}

Throughout this paper, all manifolds and foliations are smooth. 
Let $(M,F)$ be a foliated manifold without boundary such that $\rk F<\dim M$. We assume $\dim M\ge 2$ and $\rk F \ge 1$ in this paper. Let $g$ be a complete metric on $M$, and define $g^F = g|_F$. Following \cite[\S5]{connes} (cf.\ \cite[\S2.1]{zhang17} \cite[\S5.1]{chen}), let
\begin{equation}\label{cf}
    \bar{\pi}\colon \calm\to M
\end{equation}
be the Connes fibration, whose fiber $\calm_x=\bar{\pi}^{-1}(x)$ at $x\in M$ is a product of four copies of the space of Euclidean metrics on the vector space $T_xM/{{F}}_x$. 

The vertical tangent bundle $T^V \calm$ is spin and of even rank since it is a direct sum of four isomorphic subbundles. Recall that there is a natural metric $g^{T^V\calm}$ on $T^V\calm$ such that any two points $v_1,v_2\in \calm_x$ $(x\in M)$ can be joined by a unique minimizing geodesic with respect to $g^{T^V\calm}|_{\calm_x}$ in the fiber $\calm_x$.

By using the Bott connection on $TM/F$, which is leafwise flat, we obtain a horizontal tangent bundle $T^H \calm\subseteq T\calm$. Fix a splitting
\begin{equation}\label{split}
    TM =F \oplus F^\perp.
\end{equation}
Then $F^\perp$ is isomorphic to $TM/F$, and \eqref{split} lifts to a splitting $T^H \calm={\mathcal{F}}\oplus {\mathcal{F}}^\perp_1$. Here $\calf$ is isomorphic to $\bar{\pi}^*F$, and it is also integrable since the Bott connection is leafwise flat. Let $g^\calf=\bar{\pi}^* g^F$ be the pullback metric on $\calf$. 

For any $v\in \calm$, the fiber $\mathcal{F}^\perp_{1,v}$ is identified with $F^\perp_{\bar{\pi}(v)}$ under the projection \eqref{cf}. By definition, each point $v\in \calm$ is a quadruple of Euclidean metrics on $F^\perp_{\bar{\pi}(v)}$. The first metric in this quadruple then determines a metric on $\calf^\perp_{1,v}$. In this way, the subbundle $\calf^\perp_1$ carries a canonically induced smooth metric $g^{\calf^\perp_1}$.

From now on, set $\calf^\perp_2=T^V\calm$. Set $q=\rk F$, $m=\dim M$ and $n=\dim\calm$. Let $g^{T\calm}$ be the Riemannian metric on $\calm$ defined by the orthogonal splitting 
\begin{equation}\label{met1}
T\calm=\calf\oplus \f\oplus \ff,\quad g^{T\calm}=g^\calf\oplus g^\f\oplus g^\ff.
\end{equation}
By \cite[Lem.~5.2]{connes}, the foliation $(\calm,\calf)$ is almost isometric with respect to \eqref{met1} in the sense of \cite[\S3]{connes}. That is, the action of the holonomy groupoid of $(\calm,\calf)$ on $\f\oplus\ff$ takes the form
\begin{equation}\label{almostiso}
    \begin{pmatrix}
        O_1 &0\\
        Q & O_2
    \end{pmatrix},
\end{equation}
where $O_1$ and $O_2$ are orthogonal matrices of ranks $m-q$ and $n-m$, respectively, and $Q$ is an $(n-m)\times (m-q)$ matrix.

Define $g^{F^\perp} = g|_{F^\perp}$. By definition, the quadruple $(g^{F^\perp},g^{F^\perp},g^{F^\perp},g^{F^\perp})$ determines an embedding $\iota\colon M\hookrightarrow \calm$. For any $v\in \calm\setminus \iota(M)$, recall there exists a unique geodesic from $v$ to $\iota(\bar{\pi}(v))$ in $\calm_{\bar{\pi}(v)}$. Let $\sigma(v)\in \ff|_{v}$ be the unit vector tangent to this geodesic at the point $v$ (pointing to the direction of $\iota(\bar{\pi}(v))$). Let 
\begin{align}
\rho(v)=\dist_{\calm_{\bar{\pi}(v)}}\bket{v,\iota(\bar{\pi}(v))}
\end{align}
denote
the length of this geodesic and define $\rho|_{\iota(M)} = 0$. Then $\sigma$ is a smooth vector field on $\calm\setminus \iota(M)$, and the function $\rho$ is smooth on $\calm\setminus \iota({M})$. 

Following \cite{zhang17,chen}, for any number $r>0$, we define
\begin{align}\label{ms}
\ms=\calm_r'=\{v\in \calm: \rho(v)\leq r\}\quad \text{and}\quad \mm = \ms \cup_{\rho^{-1}(r)} \calm_r'.
\end{align}
Here $\ms$ is a smooth manifold with boundary $\rho^{-1}(r)$, and $\mm$ is the double of $\ms$. Using a foliated collar neighborhood around $\rho^{-1}(r)$ with respect to the integrable subbundle $T^V\calm|_{\calm_{r}}$, the projection \eqref{cf} extends to a proper smooth submersion 
\begin{equation}\label{mm}
\pp\colon\mm\to M,
\end{equation}
which is a smooth (locally trivial) fibration by \cite{er51}. Note that the fibers of $\mm$ are the double of the corresponding fibers of $\ms$. View $\ms$ as a subset of $\mm$, and then we obtain an embedding $\iota\colon M\to\mm$.

The connection on $\ms$ can be extended to a connection on $\mm$, which determines a horizontal subbundle $T^H\mm\subseteq T\mm$. Then $TM =F \oplus F^\perp$ lifts to a splitting $T^H\mm = \hfz\oplus \hf$. Let $\widetilde{\mathcal{F}}^\perp_2$ denote the vertical tangent bundle $T^V \mm$. Let $g^{\widetilde{\mathcal{F}}} = \widetilde{\pi}^*_r g^{{F}}$ be the pullback metric on $\widetilde{\mathcal{F}}$. Take an arbitrary metric $g^{\widetilde{\mathcal{F}}^\perp_2}$ on $\mm$ that extends $g^{{\mathcal{F}}^\perp_2}|_\ms$. Moreover, we can take a metric $g^{\widetilde{\mathcal{F}}^\perp_1}$ which extends $g^{{\mathcal{F}}^\perp_1}|_\ms$ and satisfies\footnote{This is possible because \eqref{control} holds on $\ms$ by the construction of $\iota\colon M\to \calm$.}
\begin{equation}\label{control}
    g^{\widetilde{\mathcal{F}}^\perp_1} \ge C_r \cdot \pp^*g^{F^\perp}\quad\text{on }\mm
\end{equation}
for some $C_r>0$. We obtain a metric $g^{T\mm}$ defined by the orthogonal splitting
\begin{align}\label{met2}
T\mm=\widetilde{\mathcal{F}}\oplus \widetilde{\mathcal{F}}^\perp_1\oplus \widetilde{\mathcal{F}}^\perp_2, \quad g^{T\mm}=g^{\widetilde{\mathcal{F}}}\oplus g^{\widetilde{\mathcal{F}}^\perp_1}\oplus g^{\widetilde{\mathcal{F}}^\perp_2}. 
\end{align}
By construction, we have $g^{T\mm}|_{\ms} = g^{T\calm}|_{\ms}$. 

Moreover, the projection $\pp\colon (\mm,g^{T\mm}) \to (M,g)$ is Lipschitz due to \eqref{control} and $g^{\widetilde{\mathcal{F}}} = \widetilde{\pi}^*_r g^{{F}}$. Then $g^{T\mm}$ is complete since $g$ is complete and $\pp$ is a proper map.

Let $f_0\colon[0,1]\to [0,1]$ be a smooth function such that $f_0\equiv 0$ on $[0,\frac{1}{4}]$, while $f_0\equiv 1$ on $[\frac{1}{2},1]$. The function $f_0(\frac{\rho}{r})\colon\ms\to [0,1]$ extends to a smooth function
\begin{equation}
    f\colon\mm\to [0,1]
\end{equation}
such that $f\equiv 1$ on $\mm\setminus \calm_{\frac{r}{2}}$. 

Let $p_2^\perp\colon T\calm\to {\mathcal{F}}^\perp_2$ be the orthogonal projection. For the Levi-Civita connection $\nabla^{T\calm}$ of $g^{T\calm}$ on $\calm$, the Euclidean connection 
\begin{equation}
    \nabla^{{\mathcal{F}}^\perp_2}\coloneqq p^\perp_2 \nabla^{T\calm}p^\perp_2
\end{equation} 
on ${\mathcal{F}}^\perp_2$ is independent of $g^\calf$ and $g^\f$. Since ${\mathcal{F}}^\perp_2$ is spin and of even rank, we have a $\mathbb{Z}_2$-graded Hermitian (dual) bundle of spinors
\begin{equation}
S({\mathcal{F}}^\perp_2)^*=S_+({\mathcal{F}}^\perp_2)^*\oplus S_-({\mathcal{F}}^\perp_2)^*
\end{equation}
equipped with a Hermitian connection $\nabla^{S_{\pm}({\mathcal{F}}^\perp_2)^*}$ induced by $\nabla^{{\mathcal{F}}^\perp_2}$. There is a smooth bundle homomorphism $\widehat{c}\colon{\mathcal{F}}^\perp_2\to {\rm End}(S({\mathcal{F}}^\perp_2)^*)$, called Clifford action, such that $\widehat{c}(X)$ is self-adjoint and $\widehat{c}(X)^2 = |X|^2$ for all $X\in\ff$.

Following \cite[\S1b]{bz93} (cf.\ \cite[\S2.3]{zhang17}, \cite[\S3]{zhang23}), we take a complex vector bundle $\mathcal{Y}$ over $\calm$ such that $S_-({\mathcal{F}}^\perp_2)^*\oplus \mathcal{Y}$ is a trivial vector bundle over $\calm$. Then $S_+(\ff)^*\oplus \mathcal{Y}$ is a trivial vector bundle on $\calm\setminus \iota(M)$, under the identification $\widehat{c}(\sigma)\oplus\id$. By obviously extending the above trivial vector bundles from $\ms$ to $\mm$, we construct a $\mathbb{Z}_2$-graded Hermitian vector bundle
\begin{equation}\label{xi}
    \xi=\xi_+\oplus \xi_-
\end{equation}
over $\mm$ and an odd self-adjoint endomorphism $V^\xi\in \Gamma({\rm End}(\xi))$ such that 
\begin{align}
\xi_\pm &=S_\pm({\mathcal{F}}^\perp_2)^*\oplus \mathcal{Y}\quad\text{on } \ms,\\
V^\xi&=f\,\widehat{c}(\sigma)\oplus \id \quad \text{on } \ms\label{vxi}
\end{align}
and $(V^\xi)^2 = \id$ on $\mm\setminus \calm_{\frac{r}{2}}$; cf. \cite[\S2.3]{zhang17}. Take a Hermitian connection $\nabla^\caly$ on $\caly$, and then take an even Hermitian connection $\nabla^\xi = \nabla^{\xi_+}\oplus\nabla^{\xi_-}$ on $\xi\to \mm$ such that 
\begin{equation}\label{xicon}
    \nabla^{\xi_\pm} = \nabla^{S_\pm(\ff)^*}\oplus \nabla^\caly \quad\text{on }\ms.
\end{equation}

\begin{rem}\label{nonfoliated}
    When $F=TM$, we adopt the convention that $\calm = \mm = M$ and $\xi = M\times\bbc$ is the trivial line bundle. Meanwhile, we will take $V^\xi = 0 \in \Gamma(\en{\xi})$ and $\rho\equiv 0$.
\end{rem}

\subsection{Poincaré Inequality on Foliations}\label{2.2} 
Inspired by Gromov's proof of the isoperimetric inequality for Carnot--Carathéodory manifolds \cite[\S2.3--2.4]{g96b} (cf.\ \cite[Prop.~18]{pansu16}) and the Poincaré-type inequality on manifolds \cite[Lem.~2.7--2.8]{wx25}, we will prove a Poincaré-type inequality on foliations. 

The notation $|\cdot|$ will be used for the fiberwise norm on a vector bundle and the fiberwise operator norm of a bundle map, sometimes with subscripts indicating the relevant metrics. Let $\inter Q$ denote the interior of a topological set $Q$.

\begin{prop}\label{lem1}
    Let $(M,g)$ be a Riemannian manifold without boundary, equipped with a smooth integrable subbundle $F\subseteq TM$ of rank $q$. Suppose $K\subseteq M$ is a compact subset and $U\subseteq M$ is an open subset such that
    \begin{equation}\label{sat}
        K\subseteq \sat(U).
    \end{equation} 
    Given $a_0>0$, there exist a compact subset $L_0\subseteq M$ and a uniform constant $C>0$ such that for any Hermitian vector bundle $E\to M$, any $u\in\Gamma(E)$, any Hermitian connection $\nabla$ on $E$, and any smooth bundle map $A\colon E\to F^*\otimes E$ satisfying
    \begin{equation}\label{acond1}
        \sup_{x\in L_0}\bac{\sup_{X \in F_x,\;|X|_g=1}|A_X|} \leq a_0,
    \end{equation}
    the following inequality holds:
    \begin{equation}\label{result1}
        \int_{K} |u|^2 \le C\bbbac{\sum_{j=1}^q\int_{L_0} \abs{(\nabla_{e_j}+ A_{e_j})u}^2 + \int_{U} |u|^2},
    \end{equation}
    where $\{e_j\}_{j=1}^q$ is any local orthonormal frame of $(F,g|_F)$ and the integrations are taken with respect to the Riemannian measure induced by $g$.
\end{prop}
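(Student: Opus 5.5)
We prove \cref{lem1} by transporting the values of $u$ on $U$ to points of $K$ along short leafwise paths, and controlling the change of $u$ along each path by the leafwise covariant derivative twisted by $A$.

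\textbf{Step 1: a compact family of leafwise paths.} Fix $x\in K$. By \eqref{sat} there is a smooth curve $\gamma_x$ tangent to $F$ joining $x$ to a point of $U$. Since $U$ is open and $F$ is integrable, we can thicken $\gamma_x$ into a smooth family of $F$-tangent paths, parametrized by a neighborhood of $x$ and by transverse parameters. Concretely, we use foliated charts along $\gamma_x$: in a flow-box $\bbr^q\times\bbr^{m-q}$ the leafwise translation $(y,z)\mapsto(y+tw,z)$ moves a small neighborhood $V_x$ of $x$ diffeomorphically, through a chain of finitely many charts, onto an open set $W_x\subseteq U$. This gives a smooth map
\[
\Gamma_x\colon V_x\times[0,1]\to M,\qquad \Gamma_x(y,0)=y,\quad \Gamma_x(y,1)\in U,\quad \partial_t\Gamma_x\in F,
\]
with $y\mapsto\Gamma_x(y,t)$ a diffeomorphism onto its image for each $t$, and Jacobians bounded above and below. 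By compactness, finitely many $V_{x_1},\dots,V_{x_N}$ cover $K$. We set $L_0=\bigcup_i\Gamma_{x_i}(\overline{V_{x_i}}\times[0,1])$, which is compact. The same compactness bounds $\abs{\partial_t\Gamma}$ by some $\ell$.

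\textbf{Step 2: pointwise estimate along one path.} Let $X=\partial_t\Gamma$ and $D=\nabla_X+A_X$. For $v(t)=u(\Gamma(y,t))$ we have
\[
\tfrac{d}{dt}\abs{v}^2=2\re\inn{Dv,v}-2\re\inn{A_Xv,v}.
\]
Using \eqref{acond1} on $L_0$ together with $\abs{X}\le\ell$, this gives $\tfrac{d}{dt}\abs{v}^2\le \abs{Dv}^2+(1+2a_0\ell)\abs{v}^2$. Running Grönwall backward from $t=1$ to $t=0$ yields
\[
\abs{u(y)}^2\le e^{c}\Bigl(\abs{u(\Gamma(y,1))}^2+\int_0^1\abs{Du}^2(\Gamma(y,t))\,dt\Bigr),\qquad c=1+2a_0\ell .
\]
We then bound $\abs{Du}^2\le\ell^2\sum_j\abs{(\nabla_{e_j}+A_{e_j})u}^2$, which holds because $X\in F$ and the expression is independent of the choice of orthonormal frame. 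Here it matters that $A$ is a bundle map in $F^*\otimes E$, linear in $X$, so $A_X$ decomposes along the frame.

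\textbf{Step 3: integrate over $V_{x_i}$ and sum.} We integrate the estimate over $y\in V_{x_i}$ and change variables $y\mapsto\Gamma(y,t)$ with the uniform Jacobian bounds. The first term becomes $\le C\int_U\abs{u}^2$ and the second becomes $\le C\int_{L_0}\sum_j\abs{(\nabla_{e_j}+A_{e_j})u}^2$. Summing over $i=1,\dots,N$ gives \eqref{result1}. The constants depend only on $a_0$, the geometry, and the chosen paths, not on $E$, $\nabla$, $u$, or $A$.

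The main obstacle is Step 1: producing a family of leafwise paths in which each time-slice map is a diffeomorphism with controlled Jacobian. This is not automatic, because holonomy can prevent a neighborhood from being moved rigidly along a leaf. We handle it with local leafwise translations in successive foliated charts along $\gamma_x$: these move whole plaques and hence neighborhoods, which works because $U$ is open and we only need its endpoint neighborhood to stay inside $U$ after shrinking $V_x$.
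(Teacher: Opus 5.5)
Your proof is correct and follows the paper's argument: Gr\"onwall along leafwise paths from $K$ into $U$, then integration over finitely many pieces using a lower Jacobian bound. The one real difference is Step 1. The paper builds the path family as the flow $\Psi$ of a compactly supported section $Y\in\Gamma(F)$ extending $\dot\gamma$ along an embedded leafwise curve $\gamma$. Then every time-slice $\Psi(\cdot,t)$ is automatically a global diffeomorphism, so the holonomy obstacle you worry about never arises, whereas your chained chart translations reach the same result more laboriously.

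There is also a small slip in Step 2 that you should fix. To bound $\lvert u(y)\rvert^2$ by the value at $t=1$ you need the lower bound $\tfrac{d}{dt}\lvert v\rvert^2\ge -\lvert Dv\rvert^2-(1+2a_0\ell)\lvert v\rvert^2$, as the paper uses, not the upper bound you wrote. The same Cauchy--Schwarz estimate gives both bounds, so the repair is immediate.
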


\begin{rem}
    Define $g^F = g|_F$. Indeed, for compact subset $K\subseteq M$ and open subset $U\subseteq M$ such that $K\subseteq \sat(U)$, we have $\sup_{x\in K}\bac{\dist_F(x,U)} < \infty$. 
\end{rem}

\begin{proof}[Proof of \cref{lem1}]
    For any $x\in K$, by \eqref{sat}, there exist $y\in U$ and an embedded smooth curve $\gamma\colon [0,1] \to M$ in a leaf such that 
    \begin{equation}\label{eq1}
        \gamma(0) = x, \quad \gamma(1) = y
        \quad\text{and}\quad
        0\ne \dot{\gamma}(t)\in F_{\gamma(t)}.
    \end{equation}
    Cover $\gamma([0,1])$ by finitely many foliation charts. Then we can extend $\dot{\gamma}$ to a smooth vector field $Y$ tangent to $F$, with compact support contained in an open neighborhood of $\gamma([0,1])$ in $M$. Let 
    \begin{equation}
        \Psi\colon M\times\bbr\to M
    \end{equation}
    denote the flow generated by $Y$. Then $\Psi(\cdot,t)$ is a diffeomorphism for each $t\in\bbr$. We have $\partial_t\Psi(z,t)= Y_{\Psi(z,t)}$ and $\Psi(\cdot,0) = \id$. Moreover, we can take a compact subset $Q\subseteq M$ such that
    \begin{equation}\label{eq3}
        x\in\inter Q
        \quad\text{and}\quad
        \Psi(Q\times\{1\})\subseteq U.
    \end{equation}
    
    Since $K$ is compact, we can take finitely many such flows $\Psi_i$ $(i=1,\dots,N)$ and such compact subsets $Q_i$ so that
    \begin{equation}\label{domain1}
        K\subseteq \cup_iQ_i
        \quad\text{and}\quad
        \Psi_i(Q_i\times\{1\}) \subseteq U.
    \end{equation} 
    Since $Q_i$ is compact, the number $a_1>0$ defined as follows is finite:
    \begin{equation}\label{req}
        a_1 = \max_i \bbac{\max_{z\in Q_i,\;t\in [0,1]} |\partial_t\Psi_i(z,t)|_g}<\infty.
    \end{equation}

    Let $J_i(z,t)$ denote the Jacobian of the diffeomorphism $\Psi_i(\cdot,t)$ at the point $z$. Note that there exists a constant $a_2>0$ such that
\begin{equation}\label{esti2}
    J_i(z,t) \ge a_2
\end{equation}
for all $i\in \{1,\dots,N\}$ and all $(z,t)\in Q_i\times[0,1]$.
    
    For $z\in Q_i$, define
\begin{equation}\label{gammaz}
    \gamma_{i,z}(t)=\Psi_i(z,t)
    \quad\text{and}\quad
    \dot{\gamma}_{i,z}(t) = \partial_t\Psi_i(z,t)\in F_{\gamma_{i,z}(t)}.
\end{equation}
For any $u\in \Gamma(E)$, let $|u|^2_{\gamma_{i,z}(t)}$ denote the value of $|u|^2$ at the point $\gamma_{i,z}(t)$. Then 
\begin{align}\label{diff}
    \begin{split}
        \partial_t\bket{\abs{u}^2_{\gamma_{i,z}(t)}}
        &= 2\re\binn{u,{\nabla}_{\dot{\gamma}_{i,z}(t)}u}\\
        &= 2\re\binn{u,\bket{\nabla_{\dot{\gamma}_{i,z}(t)}+A_{\dot{\gamma}_{i,z}(t)}}u} - 2\re\binn{u,A_{\dot{\gamma}_{i,z}(t)}u}\\
        &\ge -(2a_0a_1 + 1)|u|^2_{\gamma_{i,z}(t)} - \babs{\bket{\nabla_{\dot{\gamma}_{i,z}(t)}+A_{\dot{\gamma}_{i,z}(t)}}u}^2
    \end{split}
\end{align}
by \eqref{acond1} and \eqref{req}. By Gronwall's inequality and \eqref{diff}, we have
\begin{align}\label{uz}
    \begin{split}
        |u|_{z}^2 &\le e^{2a_0a_1+1}\bbbac{|u|_{\gamma_{i,z}(1)}^2 + \int_{0}^{1}\babs{\bket{\nabla_{\dot{\gamma}_{i,z}(t)}+A_{\dot{\gamma}_{i,z}(t)}}u}^2\,\di t}\\
    &\le e^{2a_0a_1+1}\bbbac{|u|_{\gamma_{i,z}(1)}^2 + a_1^2\sum_{j=1}^q\int_{0}^{1}\babs{\bket{\nabla_{e_j}+A_{e_j}}u}^2\,\di t}
    \end{split}
\end{align}
for any $z\in Q_i$ and any local orthonormal frame $\{e_j\}_{j=1}^q$ of $(F,g^F)$. We define 
\begin{equation}\label{defl}
    L_0 = \cup_i \bac{\Psi_i(Q_i\times[0,1])}.
\end{equation}

Integrate \eqref{uz} over $Q_i$ and then sum over $i=1,\dots,N$. By \eqref{esti2}, we have
\begin{equation}\label{esti3}
    \int_K|u|^2 \le \sum_{i=1}^N\int_{Q_i} |u|^2
    \le 
    \frac{N e^{(2a_0a_1+1)}}{a_2}\bbbac{\int_{U}|u|^2 + a_1^2\sum_{j=1}^{q}\int_{L_0}\babs{\bket{\nabla_{e_j}+A_{e_j}}u}^2}.
\end{equation}
So we obtain \eqref{result1} for $C = N e^{2a_0a_1+1}a^{-1}_2\max\{1,a_1^2\}$.
\end{proof}

Recall the Connes fibration $\bar{\pi}\colon \calm\to M$ in \eqref{cf}. Let $g^F$ be a smooth metric on $F$. Following \cite[(1.11)]{zhang17}, we rescale the metric $g^{T\calm}$ defined in \eqref{met1} as follows:
\begin{align}\label{met3}
    g_{\beta,\varepsilon}^{T\calm}=\beta^2 g^{\calf}\oplus \frac{g^{\f}}{ \varepsilon^2}\oplus g^{\ff}\quad (0<\beta,\varepsilon\leq 1).
\end{align}

\begin{prop}\label{lem2}
    Let $M$ be a smooth manifold without boundary. Let $F\subseteq TM$ be a smooth integrable subbundle equipped with a smooth metric $g^F$. Suppose $K\subseteq M$ is a compact subset and $U\subseteq  M$ is an open subset such that 
    \begin{equation}
        K\subseteq \sat(U).
    \end{equation}  
    Given $a_0>0$, there exist a compact subset $L_0\subseteq  M$ and a uniform constant $C>0$ such that for any Hermitian vector bundle $E\to \calm$, any $u\in \Gamma(E)$, any Hermitian connection $\nabla$ on $E$, any $\bv>0$, and any smooth bundle map $A\colon E\to \calf^*\otimes E$ with
    \begin{equation}\label{acond2}
        \sup_{v\in \bar{\pi}^{-1}(L_0)}\bbac{\sup_{X \in \calf_v,\;|X|_\bv=1}|A_X|} \leq \frac{a_0}{\beta},
    \end{equation}
    the following inequality holds:
    \begin{equation}\label{result2}
            \int_{\bar{\pi}^{-1}(K)}\frac{|u|^2}{\beta^2}
            \le C\bbbac{\sum_{j=1}^q\int_{\bar{\pi}^{-1}(L_0)} \babs{\bket{\nabla_{e_j}+A_{e_j}}u}^2 + \int_{\bar{\pi}^{-1}(U)} \frac{|u|^2}{\beta^2}},
    \end{equation}
    where $\{e_j\}_{j=1}^q$ is any local orthonormal frame of $(\calf,\beta^2g^\calf)$ and the integrations are taken with respect to the Riemannian measure induced by the metric $g^{T\calm}_\bv$ in \eqref{met3}.
\end{prop}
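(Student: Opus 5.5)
Our approach is to run the proof of \cref{lem1} again, but on the Connes fibration, using horizontal lifts of the leafwise flows. The aim is to keep every constant independent of $\beta,\varepsilon$ and of the noncompact fibres of $\bar\pi$.

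\textbf{Construction.} Start with the data $\Psi_i$, $Q_i$, $a_1$ from the proof of \cref{lem1} on $M$, built with some auxiliary complete metric $g$ extending $g^F$; the set $L_0$ is again defined by \eqref{defl}. Each vector field $Y_i$ is tangent to $F$ and compactly supported. Let $\widetilde Y_i\in\Gamma(\calf)$ be its horizontal lift to $\calm$. The flow $\widetilde\Psi_i$ of $\widetilde Y_i$ covers $\Psi_i$, and it is complete because $\widetilde Y_i$ is a lift of a compactly supported field through the parallel transport of the flat Bott connection. Consequently $\widetilde\Psi_i(\bar\pi^{-1}(Q_i)\times\{1\})\subseteq\bar\pi^{-1}(U)$ and $\widetilde\Psi_i(\bar\pi^{-1}(Q_i)\times[0,1])\subseteq\bar\pi^{-1}(L_0)$. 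The velocity satisfies $|\widetilde Y_i|_{\bv}=\beta|Y_i|_{g^F}\le\beta a_1$. Hence, along a curve $\widetilde\gamma_{i,v}$, condition \eqref{acond2} gives $|A_{\dot{\widetilde\gamma}}|\le a_0a_1$, uniformly in $\beta$. Likewise $|(\nabla+A)_{\dot{\widetilde\gamma}}u|^2\le \beta^2a_1^2 q\sum_j|(\nabla_{e_j}+A_{e_j})u|^2$ for a $\beta^2g^\calf$-orthonormal frame $\{e_j\}$.

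\textbf{Estimate.} The Gronwall argument \eqref{diff}--\eqref{uz} then gives, for $v\in\bar\pi^{-1}(Q_i)$,
\[
|u|_v^2\le e^{2a_0a_1+1}\Big\{|u|^2_{\widetilde\gamma_{i,v}(1)}+\beta^2a_1^2q\sum_j\int_0^1|(\nabla_{e_j}+A_{e_j})u|^2\,\di t\Big\}.
\]
Divide by $\beta^2$ and integrate over $\bar\pi^{-1}(Q_i)$ with respect to the measure of $g^{T\calm}_\bv$. Then apply the change of variables $w=\widetilde\Psi_i(v,t)$, Fubini in $t$, and sum over $i$. This yields \eqref{result2} with $C=Nqe^{2a_0a_1+1}a_2'^{-1}\max\{1,a_1^2\}$, where $a_2'$ is a lower bound on the Jacobian of $\widetilde\Psi_i(\cdot,t)$.

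\textbf{Main obstacle: uniformity of $a_2'$.} The fibres are noncompact, and the metric depends on $\beta,\varepsilon$, so a lower bound on the Jacobian is not automatic. The remedy is to factor the Riemannian measure of $g^{T\calm}_\bv$ as
\[
\beta^{q}\varepsilon^{-(m-q)}\cdot(\text{measure of }g^{T\calm}).
\]
The constant factor cancels from both sides, so it suffices to compare with $g^{T\calm}$. For $g^{T\calm}$, the Jacobian of $\widetilde\Psi_i(\cdot,t)$ splits into three factors. The first is the Jacobian of $\Psi_i$ along $F$, which is pulled back from $M$ and hence bounded below by compactness. The second is the determinant of the action on $\f\oplus\ff$, which is block lower triangular with orthogonal diagonal blocks by \eqref{almostiso}, so it equals $\pm1$. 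The third is the $F^\perp$-volume change coming from $\Psi_i$. Therefore the Jacobian is bounded below by a constant depending only on $\Psi_i|_{Q_i\times[0,1]}$. I expect the most care to be needed in checking that the transverse determinant is exactly governed by \eqref{almostiso} and is independent of the fibre coordinate. If that fails, the fallback is to use the Liouville formula $\partial_t\log J=\operatorname{div}\widetilde Y_i\circ\widetilde\Psi_i$ and bound $\operatorname{div}_{g^{T\calm}}\widetilde Y_i$ uniformly. This bound should hold because $\widetilde Y_i$ preserves the fibre metric structure, by the naturality of the Connes construction.
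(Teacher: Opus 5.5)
Your proposal is correct and follows the paper's proof essentially step for step. You use horizontal lifts $\widetilde\Psi_i$ of the leafwise flows and run the Gronwall estimate along the lifted curves, where the factor $\beta$ in $|\widetilde Y_i|_{\bv}$ exactly cancels the $a_0/\beta$ bound on $A$. You then get a $\bv$-independent lower bound on the Jacobian from \eqref{almostiso}; your justification (the measure of $g^{T\calm}_\bv$ is a constant multiple of that of $g^{T\calm}$, and the flow acts on $T\calm/\calf$ by Bott parallel transport, i.e.\ by holonomy) spells out what the paper only asserts.

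The one slip is your ``third factor'', which does not exist. On $\calm$ the Jacobian is just $\det(\di\Psi_i|_F)$ times the $\pm1$ transverse determinant, because the metric on $\f$ at $v$ is the first component of $v$ itself, so any $F^\perp$-distortion on the base is already absorbed into the orthogonal block $O_1$. Since that spurious factor would in any case be bounded below by compactness, it does not affect your conclusion.
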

\begin{proof}
    As in \eqref{eq1}--\eqref{eq3}, we obtain finitely many flows $\Psi_i\colon M\times \bbr\to M$ ($i=1,\cdots, N$) generated by the vector fields $Y_i\in\Gamma(F)$ with compact support. Moreover, we obtain compact subsets $Q_i\subseteq M$ such that \eqref{domain1} holds.    
    
    Recall the horizontal subbundle $T^H\calm$ determined by the Bott connection in \cref{2.1}. The horizontal lifting of the vector field $Y_i$ generates a horizontal flow
    \begin{equation}
        \widetilde{\Psi}_i\colon \calm\times \bbr\to\calm.
    \end{equation} 
    Define $\widetilde{Q}_i = \bar{\pi}^{-1}(Q_i)$, and define $a_1$ by \eqref{req}. Then we have
    \begin{equation}\label{req2}
        \max_i \bbac{\max_{z\in \widetilde{Q}_i,\;t\in [0,1]} |\partial_t\widetilde{\Psi}_i(z,t)|_\bv} = \beta a_1<\infty.
    \end{equation}

    Define $\widetilde{\gamma}_{i,z}(t) = \widetilde{\Psi}_i(z,t)$. As in \eqref{uz}, it follows from \eqref{acond2} and \eqref{req2} that
    \begin{equation}\label{uz1}
        |u|_{z}^2 
        \le e^{2a_0a_1+1}\bbbac{|u|_{\widetilde{\gamma}_{i,z}(1)}^2 + \beta^2a_1^2\sum_{j=1}^q\int_{0}^{1}\babs{\bket{\nabla_{e_j}+A_{e_j}}u}^2\,\di t}
    \end{equation}
    for any $z\in\widetilde{Q}_i$ and any local orthonormal frame $\{e_j\}_{j=1}^q$ of $(\calf,\beta^2g^\calf)$.

    Let $\widetilde{J}_i(z,t)$ denote the Jacobian of the diffeomorphism $\widetilde{\Psi}_i(\cdot,t)$ at the point $z$. Recall that $(\calm,\calf)$ is almost isometric with respect to \eqref{met3}. By \eqref{almostiso}, there exists a constant $a_2>0$ independent of $\bv$ such that
    \begin{equation}\label{esti22}
        \widetilde{J}_i(z,t) \ge a_2
    \end{equation}
    for all $i\in \{1,\dots,N\}$ and all $(z,t)\in \widetilde{Q}_i\times[0,1]$. As in \eqref{esti3}, we have
\begin{equation}\label{esti32}
    \int_{\bar{\pi}^{-1}(K)}|u|^2 \le 
    \frac{N e^{(2a_0a_1+1)}}{a_2}\bbbac{\int_{\bar{\pi}^{-1}(U)}|u|^2 + \beta^2a_1^2\sum_{j=1}^{q}\int_{\bar{\pi}^{-1}(L_0)}\babs{\bket{\nabla_{e_j}+A_{e_j}}u}^2}.
\end{equation}
for the compact subset $L_0\subseteq M$ defined in \eqref{defl}. Then \eqref{result2} also holds for $C = N e^{2a_0a_1+1}a^{-1}_2\max\{1,a_1^2\}$.
\end{proof}
\begin{rem}\label{rem2}
    For every positive integer $k$, \cref{lem2} still holds if we replace $\calm$ by $\calm\times \bbt^k$, where we take $g^{T(\calm\times \bbt^k)}_\bv = g^{T\calm}_\bv \oplus \epsilon_1^2\di\theta_1^2 \oplus \cdots \epsilon_k^2\di\theta_k^2$ for $\theta_i\in\bbs^1 = \bbr/2\pi\bbz$ and $\epsilon_i>0$ $(i=1,\dots,k)$.
\end{rem}

\section{Llarull-type Theorem on Foliations}\label{3}
As an application of \cref{lem2}, we prove a leafwise scalar curvature extremality theorem on the closed foliated manifold $(M,F)$ when $TM$ or $F$ is spin; see \cref{main1}. As a corollary, we obtain the Llarull-type \cref{t1.7} on foliations. 

\subsection{Leafwise Scalar Curvature Extremality}\label{3.1}
Let $(M,F)$ be a foliated manifold. Assume either $TM$ or $F$ is spin. We construct an auxiliary even-dimensional manifold $M^\lozenge$ as follows. When $TM$ is spin, define
\begin{equation}\label{tmspin}
    M^\lozenge =
    \begin{cases}
        M,& \text{if } \dim M \text{ is even},\\
        M \times \bbs^1,& \text{if }\dim M \text{ is odd}.
    \end{cases}
\end{equation}
When $TM$ is nonspin but $F$ is spin, define
\begin{equation}\label{fspin}
    M^\lozenge =
    \begin{cases}
        M,& \text{if } \rk F \text{ and } \dim M\text{ are even},\\
        M \times \bbs^1\times \bbs^1,& \text{if } \rk F \text{ is odd and } \dim M\text{ is even},\\
        M\times\bbs^1,& \text{otherwise}.
    \end{cases}
\end{equation}

Let $F$ and $TM$ still denote their pullbacks to $M^\lozenge$. Let $g^F$ be a smooth metric on $F$. Set $q = \rk F$. Let $E\to M^\lozenge$ be a Hermitian vector bundle equipped with a Hermitian connection $\nabla^E$. Let $R^E = (\nabla^E)^2$ denote its curvature.

When $TM$ is nonspin but $F$ is spin, let $S(F) \to M^\lozenge$ be the Hermitian bundle of spinors on $(F,g^F)$ that is equipped with the Clifford action $c\colon F\to \en{S(F)}$. Define a self-adjoint bundle endomorphism $\calr^{E}_q\in \Gamma(\en{S(F) \otimes E})$ on $M^\lozenge$ by
\begin{equation}\label{end1}
    \calr^{E}_q = \frac{1}{2}\sum_{i,j=1}^{q} c(\we_i) \, c(\we_j) \otimes R^E(\we_i,\we_j)
\end{equation}
for any local orthonormal frame $\{\we_j\}_{j=1}^q$ of $(F,g^F)$. 

When $TM$ is spin, we extend $g^F$ to an arbitrary metric $g^{TM}$ on $TM\subseteq TM^\lozenge$. Let $S(TM)\to M^\lozenge$ be the Hermitian bundle of spinors on $(TM,g^{TM})$ that is equipped with the Clifford action $c\colon TM\to\en{S(TM)}$. Then \eqref{end1} defines a self-adjoint bundle endomorphism $\calr^{E}_q\in \Gamma(\en{S(TM) \otimes E})$ on $M^\lozenge$ instead. Let $|\calr^{E}_q|$ denote the pointwise operator norm of $\calr^{E}_q$.

\begin{thm}\label{main1}
Let $M$ be a closed, connected, oriented manifold equipped with an integrable subbundle $F\subseteq TM$ of rank $q \ge 2$. Assume either $TM$ or $F$ is spin. Let $g^F$ be a smooth metric on $F$. If there exists a $\bbz_2$-graded Hermitian vector bundle $E = E_+\oplus E_-$ over $M^\lozenge$ equipped with an even Hermitian connection $\nabla^E$ such that
\begin{equation}\label{cond1}
    k^F \ge 4|\calr^{E}_q|
    \text{ on } M^\lozenge
    \quad\text{and}\quad
    \binn{\widehat{A}\ket{TM^\lozenge}\ch (E_+ - E_-),[M^\lozenge]} \ne 0,
\end{equation}
then there exists a leaf $\call\subseteq M^\lozenge$ associated with $F$ such that $k^F = 4|\calr^{E}_q|$ on $\call$.
\end{thm}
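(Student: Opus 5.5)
We argue by contradiction. Suppose that on every leaf $\call$ there is a point with $k^F>4|\calr^E_q|$. Define the open set $U=\{x\in M^\lozenge: k^F(x)>4|\calr^E_q|(x)\}$. The assumption says every leaf meets $U$, so $M^\lozenge\subseteq\sat(U)$. Since $M^\lozenge$ is compact, we may fix $\delta>0$ and a smaller open set $U'\Subset U$, still with $M^\lozenge\subseteq\sat(U')$, on which $k^F-4|\calr^E_q|\ge\delta$.

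We then run the Connes fibration/sub-Dirac argument of \cite{zhang17,swz22} on $\mm$ (times tori if needed, by \cref{rem2}). Consider the deformed sub-Dirac operator
\[
D_{\bv}+\tfrac{T}{\beta}V^\xi
\]
twisted by $\pp^*E\otimes\xi$, with respect to the rescaled metric \eqref{met3}. Its index equals $\binn{\widehat A(TM^\lozenge)\ch(E_+-E_-),[M^\lozenge]}\neq0$, so for all small $\beta,\varepsilon$ and $T$ suitably tuned there is a nonzero kernel element $u$. Following the Lichnerowicz-type estimates of \cite[\S2]{zhang17} and \cite[\S5]{chen}, the square of the operator splits as
\[
\sum_j\abs{\nabla_{e_j}u}^2+\frac{k^F}{4\beta^2}|u|^2+\frac{1}{\beta^2}\inn{\calr^E_q u,u}+O\Bigl(\frac{\varepsilon^2}{\beta^2}+\frac1\beta+\frac{1}{\beta^2 r^2}\Bigr)|u|^2+\frac{T^2}{\beta^2}|V^\xi u|^2+\cdots.
\]
Here $\{e_j\}$ is orthonormal for $\beta^2g^\calf$ and the leafwise part is written in the form $\sum_j\abs{(\nabla_{e_j}+A_{e_j})u}^2$, where $A$ is of size $O(1/\beta)$ coming from Clifford terms. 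Away from $\iota(M)$, the term $\frac{T^2}{\beta^2}|V^\xi u|^2$ with $(V^\xi)^2=\id$ forces $u$ to concentrate near $\ms$, exactly as in \cite{zhang17}. Near $\iota(M)$, we have $k^F\ge4|\calr^E_q|$, so all zeroth-order curvature terms are nonnegative up to errors of size $o(1/\beta^2)$.

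The key step uses the strict region. On $\bar\pi^{-1}(U')$ the curvature term gives $\frac{\delta}{4\beta^2}\int_{\bar\pi^{-1}(U')}|u|^2$. We apply \cref{lem2} with $K=M^\lozenge$ and $U=U'$; the bound \eqref{acond2} holds with some uniform $a_0$. It controls $\int\frac{|u|^2}{\beta^2}$ over the whole space by $C\bigl(\sum_j\|(\nabla_{e_j}+A_{e_j})u\|^2+\int_{U'}\frac{|u|^2}{\beta^2}\bigr)$. To make this usable, we split the leafwise gradient term as $\theta\sum_j\|(\nabla_{e_j}+A_{e_j})u\|^2+(1-\theta)(\cdots)$, with $(1-\theta)$ times the leafwise part used in the Lichnerowicz inequality. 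This gives
\[
0=\|(D+\tfrac T\beta V^\xi)u\|^2\ \ge\ c\int\frac{|u|^2}{\beta^2}-o\Bigl(\frac1{\beta^2}\Bigr)\|u\|^2
\]
for a fixed $c>0$ depending on $\delta$, $C$ and $\theta$. Choosing $\varepsilon,\beta,r$ small in the right order yields $u=0$, a contradiction. Hence some leaf satisfies $k^F\le4|\calr^E_q|$ everywhere, and the hypothesis $k^F\ge4|\calr^E_q|$ upgrades this to equality.

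The main obstacle is this absorption step. We must arrange the Lichnerowicz formula so that the leafwise Dirac part genuinely dominates a positive multiple of $\sum_j|(\nabla_{e_j}+A_{e_j})u|^2$; for instance, use a Friedrichs/Penrose-type splitting $\sum_j|\nabla_{e_j}u|^2\ge\frac1q|D^F u|^2$ and keep a fraction of the gradient. We also need the $A$-terms and the transversal error terms to obey the $1/\beta$ scaling required by \cref{lem2}, uniformly in $\bv$. The remaining choices — handling the cases in \eqref{tmspin}--\eqref{fspin}, the $F$-spin versus $TM$-spin constructions, and the index computation — follow \cite{swz22,zhang17} verbatim.
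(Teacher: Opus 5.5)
Your outline follows the paper's proof. The steps match: compactness gives an open $U$ with $\sat_{M^\lozenge}(U)=M^\lozenge$ and a uniform gap $k^F-4|\calr^E_q|\ge a$ on $U$ (your $\delta$). A Zhang-type deformed operator on $\mm$ has index equal to the nonzero $\widehat{A}$-pairing. After the $\varphi_1,\varphi_2$ and $\cals_1\oplus\cals_2$ localization, the Lichnerowicz formula on the component $u_1$, supported in $\calm_{3r/4}\subseteq\calm$, is combined with \cref{lem2} to get $\|\calb u_1\|^2\ge\frac{C}{\beta^2}\int|u_1|^2$ up to lower-order errors.

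The step you call the main obstacle is not one. The paper simply takes $A=0$ in \cref{lem2}. The Bochner--Lichnerowicz formula \eqref{spe6} for the full operator $D_\bv$ already contains $\sum_{j=1}^n|\nabla^\bv_{e_j}u_1|^2\ge\sum_{j=1}^q|\nabla^\bv_{e_j}u_1|^2$ with coefficient one. Its curvature term $\frac{k^\calf+4\beta^2\calr^{\cale,\bv}_q}{4\beta^2}$ is $\ge0$ everywhere and $\ge\frac{a}{4\beta^2}$ on $\bar\pi^{-1}(U)$. So \eqref{keyesti} and \eqref{spe7} follow directly, with no $\theta$-splitting and no Penrose operator; that refinement is only needed in Section~\ref{4}, for the sharp coefficient $\frac{q}{q-1}$. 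A nonzero $A$ of genuine size $a_0/\beta$ would actually create a problem. Replacing $\sum_j|\nabla_{e_j}u|^2$ by $\sum_j|(\nabla_{e_j}+A_{e_j})u|^2$ costs $O(a_0^2\beta^{-2})\|u\|^2$, the same order as the gain. Making that work would require an exact identity such as \eqref{penrose}.

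Two points in your set-up also need correcting.

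First, your operator $D_\bv+\frac{T}{\beta}V$ omits Zhang's factor $h_\delta$ from \eqref{hdelta}--\eqref{dd1}. On the doubled region $\mm\setminus\ms$, nothing controls $[D_\bv,V]$. Since the leafwise frame vectors are $\beta^{-1}\we_j$, the cross term $\frac{T}{\beta}[D_\bv,V]$ there is $\frac{T}{\beta^2}$ times a constant depending on $r$ and on the arbitrary extension of $(\xi,\nabla^\xi,V^\xi)$. This forces $T$ to be large. On $\calm_{3r/4}$, however, $V^2=f^2$ vanishes near $\iota(M)$, and the $u_1$-estimate needs $\frac{T}{\beta}\cdot\frac{C_1}{\beta r}$ small compared with $\frac{C}{\beta^2}$, i.e.\ $T\ll r$. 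Your sketch gives no reason these two requirements are compatible. With $h_\delta D_\bv h_\delta+\beta^{-1}V$, the doubled-region error becomes $\delta^2O_{\bvr}(1)$ as in \eqref{spe4}, and $\delta$ is chosen last.

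Second, $r$ must be taken large, not small, since the cut-off and commutator errors in \eqref{spe2} are of order $\frac{1}{\beta^2 r}$. The correct order is $r$ large, then $\beta=\varepsilon$ small, then $\delta$ small.

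With these corrections your argument is exactly the paper's.
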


\begin{rem}
    The vanishing theorems of Connes \cite{connes} and Zhang \cite{zhang17} generalize Lichnerowicz theorem to foliations: if a closed, oriented, foliated manifold $(M,F)$ admits a metric of positive leafwise scalar curvature and either $F$ or $TM$ is spin, then $\widehat{A}(M)= 0$. Applying \cref{main1} to $M\times M\times M\times M$ and $E = \bbc$, we can show that the vanishing theorems of Connes and Zhang still hold when $k^F\ge 0$ and there exists a point $x_\mathcal{L}$ on each leaf $\call$ of $(M,F)$ such that $k^F(x_\mathcal{L})>0$.
\end{rem}

\begin{rem}
    Using similar techniques as in the proof of \cref{main1} (cf.\ \cite[Thm.~1.8]{swz22}), we can also prove the following result. Let $(M,F)$ be a closed, oriented, foliated manifold such that $M$ is area enlargeable along $F$ (see \cite[Def.~1.3--1.4]{swz22}). Assume either $TM$ or $F$ is spin. If $g^F$ is a smooth metric on $F$ with $k^F\ge 0$ on $M$, then there exists a leaf $\call$ associated with $F$ such that $k^F\equiv 0$ on $\call$.
\end{rem}

\begin{proof}[Proof of \cref{main1}]
    Suppose to the contrary that for every leaf $\call\subseteq M^\lozenge$ associated with $F$, there exists a point $x_{\call}\in\call$ such that $k^F - 4|\calr^E_q| \ge 2a_\call$ at $x_\call$ for some $a_\call>0$. Then there exists an open neighborhood $U_\call\subseteq M^\lozenge$ around $x_\call$ such that
    \begin{equation}
        k^F - 4|\calr^E_q| \ge a_\call>0 \quad \text{on }U_\call.
    \end{equation} 
    
    As a subset of $M^\lozenge$, the saturation $\sat_{M^\lozenge}(U_\call)$ of $U_\call$ is defined to be the union of all leaves on $(M^\lozenge,F)$ that intersect $U_\call$. Note that $\sat_{M^\lozenge}(U_\call)\subseteq M^\lozenge$ is open since $U_\call$ is. Since $M^\lozenge$ is compact, there exist finitely many leaves $\{\call_i\}_{i=1}^N$ such that
    \begin{equation}
        M^\lozenge = \cup_i\sat_{M^\lozenge}(U_{\call_i}).
    \end{equation}
    Define $U = \cup_i \, U_{\call_i}$ and $a = \min_{i}\{a_{\call_i}\}$. Then we have
    \begin{equation}\label{alpha}
        M^\lozenge = \sat_{M^\lozenge}(U)
        \quad\text{and}\quad
        k^F - 4|\calr^E_q| \ge a > 0
        \text{ on } U.
    \end{equation}
    
    Next, we work case by case to deduce a contradiction.

\subsection{Proof of Theorem \ref{main1} when \texorpdfstring{$TM$}{TM} is Spin}\label{3.2}

Suppose $TM$ is spin. We can assume $\dim M$ is even without loss of generality. In this case, we have $M^\lozenge = M$. The manifold $\mm$ defined in \eqref{ms} is closed since $M$ is. Following \cite{zhang17}, we rescale the metric $g^{T\mm}$ defined in \eqref{met2} as follows:
\begin{align}\label{met4}
    g_{\beta,\varepsilon}^{T\mm}=\beta^2 g^{\widetilde{\mathcal{F}}}\oplus \frac{g^{\widetilde{\mathcal{F}}^\perp_1}}{ \varepsilon^2}\oplus g^{\widetilde{\mathcal{F}}^\perp_2}\quad (0<\beta,\varepsilon\leq 1).
\end{align}

Since $TM$ and $\widetilde{\mathcal{F}}^\perp_2$ are spin and of even rank, so is  $T\mm$. Let
\begin{equation}
    S_{\beta,\varepsilon}(T\mm)=S_{\beta,\varepsilon}^+(T\mm)\oplus S^-_{\beta,\varepsilon}(T\mm)
\end{equation} 
be the $\mathbb{Z}_2$-graded Hermitian bundle of spinors associated with $(\mm,g_{\beta,\varepsilon}^{T\mm})$. Let $c_\bv\colon T\mm\to \en{S_{\beta,\varepsilon}(T\mm)}$ denote the Clifford action.

Let $\ee = \ee_+\oplus\ee_-$ denote the pullback Hermitian vector bundle over $\mm$ defined by $ \ee_\pm = \pp^*E_\pm$. Define a $\bbz_2$-graded Hermitian vector bundle $\cals_\bv = \cals_\bv^+ \oplus \cals_\bv^-$ by
\begin{align}\label{bun1}
\cals_\bv = S_{\beta,\varepsilon}(T\mm)\,\widehat{\otimes}\,\xi \,\widehat{\otimes}\, \ee
\end{align}
over $\mm$, where $\xi$ is defined in \eqref{xi}. Let $c_\bv$ still denote the induced Clifford action $c_\bv \,\widehat{\otimes}\, \id \otimes \id$ on $\cals_\bv$. Let $\nabla^\bv$ denote the tensor product Hermitian connection on $\cals_\bv$, and let $D_\bv$ denote the twisted Dirac operator on $\cals_\bv$. 

Recall the bundle endomorphism $V^\xi\in\Gamma(\en{\xi})$ defined around \eqref{vxi}. We define an odd self-adjoint bundle endomorphism 
\begin{equation}\label{end2}
    V = \id
    \,\widehat{\otimes}\, V^\xi \,\widehat{\otimes}\, \id \in \Gamma(\en{\cals_\bv}).
\end{equation}
Then $V$ anticommutes with $c_\bv$. 

For $0<\delta\le 1$, let $h_{\delta,0}\colon[0,1]\to [\delta,1]$ be a smooth function such that $h_{\delta,0}\equiv 1$ on $[0,\frac{3}{4}]$, while $h_{\delta,0}\equiv \delta$ on $[\frac{7}{8},1]$. The function $h_{\delta,0}(\frac{\rho}{r})\colon\ms\to [0,1]$ extends to a globally defined smooth function 
\begin{equation}\label{hdelta}
    h_\delta\colon\mm\to [0,1]
\end{equation}
such that $h_\delta\equiv \delta$ on $\mm\setminus \calm_{\frac{7r}{8}}$. 

Following \cite[(2.21)]{zhang17}, for $\delta,\bv>0$, we define the deformed Dirac operator
\begin{align}\label{dd1}
    \calb_{\delta,\bv}=h_\delta \,D_\bv\, h_\delta + \beta^{-1}V\colon \Gamma(\cals_\bv)\to \Gamma(\cals_\bv)
\end{align}
on the closed manifold $\mm$. For simplicity, we write $\calb = \calb_{\delta,\bv}$. Let $\calb_+\colon\Gamma(\cals_\bv^+)\to \Gamma(\cals_\bv^-)$ denote the natural restriction. As in \cite[(2.34)]{zhang17}, it follows from the Atiyah--Singer index theorem \cite{as68} (cf. \cite[III.~Thm.~13.8]{lm89}) and \eqref{cond1} that
\begin{align}\label{contradict}
    \begin{split}
        \ind\calb_+ &= \binn{\widehat{A}(T\mm)\ch(\xi_+-\xi_-)\ch(\widetilde{\cale}_+-\widetilde{\cale}_-),[\mm]}\\
        &=\binn{\widehat{A}(TM)\ch (E_+-E_-),[M]}\ne 0.
    \end{split}
\end{align}

Following \cite[p.~115]{bl91}, let $\varphi_0:[0,1]\to [0,1]$ be a smooth function such that ${\varphi}_0\equiv 0$ on $[0,\frac{1}{2}]$ and ${\varphi}_0\equiv 1$ on $[\frac{3}{4},1]$. Define $\varphi_1,\varphi_2:\mm\to [0,1]$ by
\begin{equation}\label{pou2}
        \varphi_{1} = \frac{1 - \varphi_0\bket{\frac{\rho}{r}}}{\bbket{\varphi_0\bket{\frac{\rho}{r}}^2 + \bket{1- \varphi_0\bket{\frac{\rho}{r}}}^2}^{\frac{1}{2}}},
        \quad
        \varphi_{2} = \frac{\varphi_0\bket{\frac{\rho}{r}}}{\bbket{\varphi_0\bket{\frac{\rho}{r}}^2 + \bket{1- \varphi_0\bket{\frac{\rho}{r}}}^2}^{\frac{1}{2}}},
    \end{equation}
on $\ms$, and require that they are constant outside $\calm_{\frac{3r}{4}}$.

For simplicity, let $\|\cdot\|$ denote the $\mathrm{L}^2$-norm on $\Gamma(\cals_\bv)$ with respect to $g^{T\mm}_\bv$. Throughout this paper, the differential $\di \varphi$ of a smooth function $\varphi$ on a Riemannian manifold will be identified with the gradient of $\varphi$. For $u\in \Gamma(\cals_\bv)$, we have
\begin{equation}
    \calb(\varphi_{b} u) = \varphi_{b} \calb u + h^2_\delta\, c_\bv({\rm d}\varphi_{b})u,\quad b=1,2.
\end{equation}
Note that  $\varphi_{1}^2 + \varphi_{2}^2 = 1$ on $\mm$. Then it follows that
\begin{align}\label{spe1}
\begin{split}
    \|\calb u\|^2
    &= \|\varphi_{1} \calb u\|^2+ \|\varphi_{2} \calb u\|^2\\
    &\geq 
    \frac{1}{2}\|\calb(\varphi_{1} u)\|^2+ \frac{1}{2}\|\calb(\varphi_{2}u)\|^2-\|h^2_\delta\, c_\bv({\rm d}\varphi_{1})u\|^2 -\|h^2_\delta\,c_\bv({\rm d}\varphi_{2})u\|^2.
\end{split}
\end{align}
In this paper, the bracket $[\cdot,\cdot]$ is superbracket for $\bbz_2$-graded vector bundles. For example, we have $[D_\bv,V] = D_\bv V + V D_\bv$. By \cite[Lem.~2.1]{zhang17}, there exists a constant $C_1>0$ independent of $\bvr>0$ so that
\begin{equation}\label{spe2}
|c_\bv({\rm d}\varphi_b)|\leq {\frac{C_1}{\beta r}}+O_{r}(1) \quad\text{and}\quad
\abs{[D_\bv,V]}\leq {\frac{C_1}{\beta r}}+O_{r}(1)
\end{equation}
on $\ms$; cf.\ \cite[(5.22)--(5.23)]{chen}. Here, the bounding constants in the $O_r(1)$ terms may depend on $r$. Recall $\supp(\varphi_2u)\subseteq \mm\setminus\calm_{\frac{r}{2}}$. By \eqref{spe2}, we have
\begin{align}\label{spe4}
    \begin{split}
        \|\calb(\varphi_2 u)\|^2
        &= \int_\mm\babs{h_\delta D_\bv (h_\delta\varphi_2 u)}^2+ \int_{\mm}\bbbinn{\varphi_2 u, \bbbket{\frac{V^2}{\beta^2} +\frac{h^2_\delta}{\beta}[D_\bv,V]}\varphi_2 u}\\
        &\geq \bbbac{\frac{1}{\beta^2}+O\bbbket{\frac{1}{\beta^2 r}}+O_r\bbbket{\frac{1}{\beta}}}\int_{\ms}|\varphi_2 u|^2\\
        &\qquad\qquad+\bbbac{\frac{1}{\beta^2}+\delta^2 \, O_{\beta,\varepsilon,r}(1)}\int_{\mm \setminus \ms}|\varphi_2 u|^2. 
    \end{split}
\end{align}

Let $\cale$ denote the pullback Hermitian vector bundle $\bar{\pi}^*E \to \calm$. We define the Hermitian vector bundles $\cals_1$ and $\cals_2$ over $\calm$ by 
\begin{align}
    \cals_1 &=S_\bv(T\calm)\,\widehat{\otimes}\,S(\ff)^* \,\widehat{\otimes}\, \cale\label{cals1},\\
    \cals_2 &=S_\bv(T\calm)\,\widehat{\otimes}\,(\caly \oplus \caly) \,\widehat{\otimes}\, \cale.\label{cals2}
\end{align}
Then $\calb$ preserves the orthogonal splitting $\cals_\bv|_{\ms} = \cals_1|_{\ms} \oplus \cals_2|_{\ms}$. Suppose
\begin{equation}
    \varphi_1 u = u_1 + u_2,
\end{equation}
where $u_b\in \Gamma(\cals_b|_{\ms})$ for $b=1,2$. Note that $h_\delta\equiv 1$ on ${\rm supp}(u_2)$. By \eqref{vxi}--\eqref{xicon}, we have $[D_\bv,V]u_2=0$ and $V^2u_2=u_2$. It follows that
\begin{align}\label{spe5}
\|\calb u_2\|^2=\int_{\ms}\abs{D_\bv\, u_2}^2+\beta^{-2}|u_2|^2\geq \int_{\ms}\beta^{-2}|u_2|^2.
\end{align}

Set $m=\dim M$ and $n=\dim\calm$. Let $\{\we_j\}_{j=1}^n$ be a local orthonormal frame on $(\calm,g^{T\calm})$ such that $\{\we_j\}_{j=1}^q$, $\{\we_j\}_{j=q+1}^m$ and $\{\we_j\}_{j=m+1}^n$ span $\calf,\f$ and $\ff$, respectively. We define
\begin{equation}\label{scale}
    e_j =
    \begin{cases}
        \beta^{-1}\we_j, & \text{if } 1\le j \le q,\\
        \varepsilon\we_j, & \text{if } q+1\le j \le m,\\
        \we_j, & \text{if } m+1\le j \le n.
    \end{cases}
\end{equation}
Then $\{e_j\}_{j=1}^n$ is a local orthonormal frame on $(\calm,g^{T\calm}_\bv)$. 

Note that ${\rm supp}(u_1)\subseteq \mathcal{M}_{\frac{3r}{4}}$. Taking $A = 0$ and $E = \cals_\bv$ with the Hermitian connection $\nabla^\bv$ in \cref{lem2}, we obtain a constant $C>0$ independent of $\bvr$ such that for the open subset $U$ and the constant $a>0$ defined before \eqref{alpha},
\begin{equation}\label{keyesti}
    \int_{\calm_r} \sum_{j=1}^{q}|\nabla^\bv_{e_j} u_1|^2 + \frac{a}{4\beta^2}\int_{\calm_r\,\cap\,\bar{\pi}^{-1}(U)}|u_1|^2 \ge \frac{C}{\beta^2}\int_{\calm_r}|u_1|^2.
\end{equation}
Define the bundle endomorphisms $\calr^\bv\in\Gamma(\en{\cals_\bv})$ and $\calr^{\cale,\bv}_q\in\Gamma(\en{\cals_1})$ by
\begin{align}
    \calr^\bv &= \frac{1}{2}\sum_{i,j=1}^n c_\bv(e_i)\,c_\bv(e_j)\,R^{\cals_\bv}(e_i,e_j),\label{calr}\\
    \calr^{\cale,\bv}_q &= \frac{1}{2}\sum_{i,j=1}^q c_\bv(e_i)\,c_\bv(e_j)\,\widehat{\otimes}\,\id \,\widehat{\otimes}\, R^{\cale}(e_i,e_j).\label{calre}
\end{align}
Recall that $\ee|_\ms = \cale|_\ms$. Since $(\ms,\calf)$ is almost isometric with respect to \eqref{met4}, by \eqref{cals1}, the Lichnerowicz formula, \cite[Prop.~1.4]{zhang17} and \cite[(1.45)--(1.46)]{zhang17}, we have 
\begin{align}
    \begin{split}\label{spe6}
        \int_\ms|D_\bv u_1|^2 &= \int_\ms\sum_{j=1}^{n}|\nabla^\bv_{e_j} u_1|^2 + \int_\ms\binn{u_1,\calr^\bv u_1}\\
        &= \int_\ms\sum_{j=1}^{n}|\nabla^\bv_{e_j} u_1|^2 + \int_\ms \bbbinn{u_1, \frac{\ket{k^\calf + 4\beta^2\calr^{\cale,\bv}_q}u_1}{4\beta^2}}\\
        &\qquad\qquad+ \int_\ms  O_r\bbbket{\frac{1}{\beta}+ \frac{\varepsilon^2}{\beta^2}}|u_1|^2;
    \end{split}
\end{align}
cf.\ \cite[(1.71)]{zhang17}. Recall that $\calf\oplus\f = \bar{\pi}^*TM$ and $\cale=\bar{\pi}^*E$. Note that
\begin{equation}
    S_\bv(T\calm) = S_\bv(\calf\oplus\f)\,\widehat{\otimes}\,S(\ff)
    \quad\text{and}\quad
    \beta^2|\calr^{\mathcal{E},\bv}_q| = \bar{\pi}^*|\mathcal{R}^E_q|.
\end{equation} 
Then it follows from \eqref{cond1}, \eqref{alpha} and $k^\calf = \bar{\pi}^*k^F$ that 
\begin{equation}\label{kf5}
    k^\calf - 4\beta^2|\calr^{\cale,\bv}_q| \ge 0 \text{ on } \calm
    \quad\text{and}\quad
    k^\calf - 4\beta^2|\calr^{\cale,\bv}_q| \ge a>0 \text{ on } \bar{\pi}^{-1}(U).
\end{equation}
By \eqref{spe2}, \eqref{keyesti}, \eqref{spe6},  \eqref{kf5} and $h_\delta \equiv 1$ on $\supp(u_1)$, we have
\begin{align}\label{spe7}
    \begin{split}
        \|\calb u_1\|^2&=\int_{\ms}|D_\bv \,u_1|^2 + \int_{\ms}\bbbinn{u_1,\bbbket{\frac{V^2}{\beta^2}+\frac{1}{\beta}[D_\bv,V]}u_1}\\
        &\ge \int_\ms\sum_{j=1}^{n}|\nabla^\bv_{e_j} u_1|^2 + \frac{a}{4\beta^2}\int_{\ms\,\cap\,\bar{\pi}^{-1}(U)}|u_1|^2 \\
        &\qquad\qquad+ \int_{\ms} \bbbac{O\bbbket{\frac{1}{\beta^2r}} + O_{r}\bbbket{\frac{1}{\beta} + \frac{\varepsilon^2}{\beta^2}} }|u_1|^2\\
        &\ge \int_{\ms}\bbbac{\frac{C}{\beta^2} + O\bbbket{\frac{1}{\beta^2r}} + O_{r}\bbbket{\frac{1}{\beta} + \frac{\varepsilon^2}{\beta^2}} } |u_1|^2.\\
    \end{split}
\end{align}

By \eqref{spe1}--\eqref{spe4}, \eqref{spe5}, \eqref{spe7}, $h_\delta\le 1$ and $\varphi_{1}^2 + \varphi_{2}^2 = 1$, there exist sufficiently large $r>0$, sufficiently small $\beta =\varepsilon>0$, $\delta>0$ and $C_0>0$ such that
\begin{align}\label{spe8}
\|\calb u\|\geq C_0\|u\| 
\end{align}
holds for all $u\in \Gamma(\cals_\bv)$. It follows that $\ind\calb_+=0$, which contradicts \eqref{contradict}.

\subsection{Proof of Theorem \ref{main1} when \texorpdfstring{$F$}{F} is Spin}\label{3.3}

In what follows, we assume $TM$ is nonspin but $F$ is spin. Then $\rk F < \dim M$.

\textbf{Case 1: $\rk F$ and $\dim M$ are even.}

In this case, we have $M^\lozenge = M$. We follow \cite[\S2.5]{zhang17} closely. Let $\widetilde{p}\colon T\mm \to \hfz$, $\widetilde{p}_1^\perp\colon T\mm\to \widetilde{\mathcal{F}}^\perp_1$ and $\widetilde{p}_2^\perp\colon T\mm\to \widetilde{\mathcal{F}}^\perp_2$ be the orthogonal projections. Let $\nabla^{T\mm,\bv}$ denote the Levi-Civita connection of $g^{T\mm}_\bv$. We define the Euclidean connections
\begin{equation}
    \nabla^{\hfz,\bv}= \widetilde{p}\, \nabla^{T\mm,\bv} \widetilde{p},
    \;\;
    \nabla^{\hf,\bv}=  \widetilde{p}^\perp_1\nabla^{T\mm,\bv}\widetilde{p}^\perp_1,
    \;\;
    \nabla^{\hff}=  \widetilde{p}^\perp_2 \nabla^{T\mm,\bv} \widetilde{p}^\perp_2
\end{equation}
on $\hfz,\hf,\hff$ respectively. Here $\nabla^{\hff}$ is independent of $\bv$ since $\hff$ is integrable. 

The subbundle $\hfz = \pp^*F$ is spin since $F$ is. Let 
\begin{equation}
    S_\bv(\hfz) = S^+_\bv(\hfz) \oplus S^-_\bv(\hfz)
    \quad \text{and}\quad 
    S(\hff) = S_+(\hff) \oplus S_-(\hff)
\end{equation} 
be the $\bbz_2$-graded Hermitian vector bundles of spinors on $(\hfz,\beta^2g^\hfz)$ and $(\hff,g^\hff)$ with Hermitian connections induced by $\nabla^{\hfz,\bv}$ and $\nabla^{\hff}$, respectively. For $X\in\hfz$ and $Y\in\hff$, the Clifford actions $c_\bv(X)\in\en{S_\bv(\hfz)}$ and $c(Y)\in\en{S(\hff)}$ are odd  endomorphisms.

Let $\Lambda^*(\hf)$ be the complexified exterior algebra bundle of the dual bundle $\widetilde{\calf}^{\perp,*}_1$, which carries a canonical Hermitian metric induced by $g^\f$. Let $\Lambda^*_\bv(\hf)$ denote the bundle $\Lambda^*(\hf)$ equipped with the Hermitian connection induced by $\nabla^{\hf,\bv}$. For $X\in \hf$, the Clifford actions $c_\bv,\widehat{c}_\bv\colon \hf\to \en{\Lambda_\bv^*(\hf)}$ are given by
\begin{equation}
    c_\bv(X) =\varepsilon^{-1} \bac{(X^*\wedge) - i_X}, 
    \quad 
    \widehat{c}_\bv(X) = \varepsilon^{-1} \bac{(X^*\wedge) + i_X}, 
\end{equation}
where $X^*\wedge$ and $i_X$ denote the exterior multiplication by $X^*$ and the interior multiplication by $X$ on $\Lambda_\bv^*(\hf)$, respectively. In this case, $\rk F^\perp = \rk \hf$ are even. Then there is a $\bbz_2$-grading on $\Lambda_\bv^*(\hf)$ determined by
\begin{equation}
    \bket{\sqrt{-1}}^{\frac{m-q}{2}}c_\bv(e_{q+1}) \cdots c_\bv(e_m),
\end{equation}
where $\{e_j\}_{j=q+1}^m$ is an orthonormal basis of $\hf$ with respect to \eqref{met4}.

Let $\phi$ be a polynomial with integer coefficients. Let $\phi(F^\perp)$ be the corresponding polynomial in the exterior and symmetric powers of $F^\perp$, and use the same notation for its complexification. Similarly, we have a $\bbz_2$-graded Hermitian vector bundle $\phi(\hf)$ with Hermitian metric induced by $g^\hf$. Let $\phi_\bv(\hf)$ denote the bundle $\phi(\hf)$ equipped with the Hermitian connection induced by $\nabla^{\hf,\bv}$. 

Instead of \eqref{bun1}, we consider
\begin{align}\label{bun2}
    \cals_\bv = S_{\beta,\varepsilon}(\widetilde{\mathcal{F}})
    \,\widehat{\otimes}\,
    \Lambda^*_{\beta,\varepsilon}(\widetilde{\mathcal{F}}^\perp_1)
    \,\widehat{\otimes}\,
    S(\widetilde{\mathcal{F}}^\perp_2)\,\widehat{\otimes }\,\xi  
     \,\widehat{\otimes}\, \phi_\bv(\hf) \,\widehat{\otimes}\, \ee,
\end{align}
as in \cite[\S2.5]{zhang17}, where $\xi$ is defined in \eqref{xi} and $\ee = \pp^*E$.

The bundle endomorphism $V^\xi$ defined around  \eqref{vxi} extends to $V\in\Gamma(\en{\cals_\bv})$ as in \eqref{end2}. Let $D_\bv$ denote the twisted sub-Dirac operator on $\cals_\bv$ defined as in \cite[\S2]{lz} and \cite[(1.61)]{zhang17}. Recall $\hf = \pp^*F^\perp$. We have $\phi(\hf) = \pp^*(\phi(F^\perp))$. For the self-adjoint Fredholm operator $\mathcal{B}_{\delta,\bv}$ defined as in \eqref{dd1}, we have (compare \eqref{contradict})
\begin{equation}\label{ind1}
    \ind\mathcal{B}_{\delta,\bv,+} = 2^{\frac{m-q}{2}}\binn{\widehat{A}({F})\,\widehat{L}({F}^\perp)\ch(\phi(F^\perp))\ch(E_+-E_-),[M]};
\end{equation}
cf.\ \cite[(2.59)]{zhang17}. By \cite[(1.45)--(1.46)]{zhang17}, we have $R^{\f,\bv}(e_i,e_j) = O_r(\beta^{-1})$ on $\calm_r$ for  $1\leq i,j\leq n$. Then \eqref{spe6} still holds. 

As in \eqref{spe8}, there exist sufficiently large $r>0$, sufficiently small $\beta=\varepsilon>0$ and sufficiently small $\delta>0$ such that
$\ind\mathcal{B}_{\delta,\bv,+} = 0$. 
By \eqref{ind1}, we have
\begin{equation}
    \binn{\widehat{A}({F})\,\widehat{L}({F}^\perp)\ch(\phi(F^\perp))\ch(E_+-E_-),[M]} = 0
\end{equation}
for any polynomial $\phi$ with integral coefficients. 
As any rational Pontryagin class $p(F^\perp)$ of $F^\perp$ can be expressed as a rational linear combination of classes of the form $\widehat{L}(F^\perp)\ch \ket{\phi(F^\perp)}$, we have
\begin{equation}
    \binn{\widehat{A}(F)\,p(F^\perp)\ch(E_+-E_-),[M]} = 0;
\end{equation}
cf.\ \cite[(2.61)]{zhang17}. Taking $p(F^\perp) = \widehat{A}(F^\perp)$, it follows from $\widehat{A}(TM)= \widehat{A}(F)\widehat{A}(F^\perp)$ that
\begin{equation}
    \binn{\widehat{A}(TM)\ch(E_+-E_-),[M]}=0,
\end{equation}
which contradicts \eqref{cond1}.

\textbf{Case 2: $\rk F$ is even and $\dim M$ is odd.}

In this case, we have $M^\lozenge = M\times \bbs^1$ and  $\mm^\lozenge = \mm\times\bbs^1$. Note that $\rk \hf$ is odd. For $0<\beta,\varepsilon\leq 1$, consider the following orthogonal splitting instead of \eqref{met4}:
\begin{equation}\label{met5}
T\mm^\lozenge=\widetilde{\mathcal{F}} \oplus \bket{\widetilde{\mathcal{F}}^\perp_1\oplus T\bbs^1}\oplus \widetilde{\mathcal{F}}^\perp_2,
\quad
g_{\beta,\varepsilon}^{T\mm^\lozenge}=\beta^2 g^{\widetilde{\mathcal{F}}}\oplus \frac{g^{\widetilde{\mathcal{F}}^\perp_1}\oplus{\di\theta^2}}{\varepsilon^2}\oplus g^{\widetilde{\mathcal{F}}^\perp_2},
\end{equation}
where $\theta\in \bbs^1\cong \bbr/2\pi\bbz$. Instead of \eqref{bun2}, we now consider
\begin{align}
S_\bv(\widetilde{\mathcal{F}})\,\widehat {\otimes }\,\Lambda^*_\bv(\widetilde{\mathcal{F}}_1^\perp\oplus T\bbs^1)\,\widehat{\otimes}\,S(\widetilde{\mathcal{F}}^\perp_2)\,\widehat{\otimes}\,\xi \,\widehat{\otimes}\, \phi_\bv(\hf \oplus T\bbs^1) \,\widehat{\otimes}\, \ee 
\end{align}
over $\mm^\lozenge$. Define  $\calm^\lozenge_{r} = \ms\times \bbs^1$. The foliated manifold $(\calm^\lozenge_r,\calf)$ is still almost isometric with respect to \eqref{met5}; cf.\ \cite[Ex.~4.6]{chen}. Proceeding as in \cref{3.1} and Case 1, we arrive at a contradiction.

\textbf{Case 3: $\rk F$ and $\dim M$ are odd.}

In this case, we have $M^\lozenge = M\times \bbs^1$ and $\mm^\lozenge = \mm\times\bbs^1$. Note that $\rk \hf$ is even. For $0<\beta,\varepsilon\leq 1$, consider the following orthogonal splitting instead of \eqref{met4}:
\begin{equation}\label{met6}
T\mm^\lozenge=\bket{\widetilde{\mathcal{F}}\oplus T\bbs^1} \oplus \widetilde{\mathcal{F}}^\perp_1\oplus \widetilde{\mathcal{F}}^\perp_2,
\quad
g_{\beta,\varepsilon}^{T\mm^\lozenge}=\bket{\beta^2 g^{\widetilde{\mathcal{F}}}\oplus{\di\theta^2}} \oplus \frac{g^{\widetilde{\mathcal{F}}^\perp_1}}{\varepsilon^2}\oplus g^{\widetilde{\mathcal{F}}^\perp_2},
\end{equation}
where $\theta\in \bbs^1\cong \bbr/2\pi\bbz$.
Instead of \eqref{bun2}, we now consider
\begin{align}
S_\bv(\widetilde{\mathcal{F}}\oplus T\bbs^1)\,\widehat {\otimes }\,\Lambda^*_\bv(\widetilde{\mathcal{F}}_1^\perp)\,\widehat{\otimes}\,S(\widetilde{\mathcal{F}}^\perp_2)\,\widehat{\otimes}\,\xi \,\widehat{\otimes}\, \phi_\bv(\hf) \,\widehat{\otimes}\, \ee 
\end{align}
over $\mm^\lozenge$. The foliated manifold $(\calm^\lozenge_r,\calf)$ is almost isometric with respect to
\begin{equation}
\hf \oplus (\hff\oplus T\bbs^1)
\quad\text{and}\quad
\frac{g^\hf}{\varepsilon^2}\oplus (g^\hff\oplus\di\theta^2),
\end{equation}
where $\calm^\lozenge_{r} = \ms\times \bbs^1$; cf.\ \cite[Ex.~4.6]{chen}. Proceeding as in \cref{3.1} and Case 1, we arrive at a contradiction; cf.\ \cite[\S6.2]{chen}.

\textbf{Case 4: $\rk F$ is odd and $\dim M$ is even.}

In this case, we have $M^\lozenge = M \times \bbs^1\times \bbs^1$ and  $\mm^\lozenge = \mm\times \bbs^1\times\bbs^1$. For $0<\beta,\varepsilon\leq 1$, we consider the following orthogonal splitting instead of \eqref{met4}:
\begin{align}
&T\mm^\lozenge=\bket{\widetilde{\mathcal{F}}\oplus T\bbs^1}\oplus \bket{\widetilde{\mathcal{F}}^\perp_1\oplus T\bbs^1}\oplus \widetilde{\mathcal{F}}^\perp_2\\
\text{and\;\;}
&g_{\beta,\varepsilon}^{T\mm^\lozenge}=\bket{\beta^2 g^{\widetilde{\mathcal{F}}}\oplus {\rm d}\theta^2_1}\oplus \frac{g^{\widetilde{\mathcal{F}}^\perp_1}\oplus{\di\theta^2_2}}{\varepsilon^2}\oplus g^{\widetilde{\mathcal{F}}^\perp_2},\label{met70}
\end{align}
where $\theta_1,\theta_2\in\bbs^1\cong \bbr/2\pi\bbz$. Instead of \eqref{bun2}, we now consider
\begin{align}
S_\bv(\widetilde{\mathcal{F}}\oplus T\bbs^1)\,\widehat {\otimes }\,\Lambda^*_\bv(\widetilde{\mathcal{F}}_1^\perp\oplus T\bbs^1)\,\widehat{\otimes}\,S(\widetilde{\mathcal{F}}^\perp_2)\,\widehat{\otimes}\,\xi  \,\widehat{\otimes}\, \phi_\bv(\hf \oplus T\bbs^1) \,\widehat{\otimes}\, \ee
\end{align}
over $\mm^\lozenge$. The foliated manifold $(\calm^\lozenge_r,\calf)$ is almost isometric with respect to
\begin{equation}\label{met71}
(\hf\oplus T\bbs^1) \oplus (\hff\oplus T\bbs^1)
\quad\text{and}\quad
\frac{g^\hf\oplus \di\theta^2_2}{\varepsilon^2}\oplus (g^\hff\oplus\di\theta^2_1),
\end{equation}
where $\calm^\lozenge_{r} = \ms\times \bbs^1\times\bbs^1$; cf.\ \cite[Ex.~4.6]{chen}. Proceeding as in \cref{3.1} and Case 1, we arrive at a contradiction; cf.\ \cite[\S6.2]{chen}. 

We complete the proof of \cref{main1}.
\end{proof}

\subsection{Proof of Theorem \ref{t1.7}}
Now we are ready to prove the Llarull-type \cref{t1.7} on foliations. 

\begin{thm}\label{main11}
    Let $(M,F)$ be a closed, oriented, foliated manifold. Suppose $m = \dim M$ and $q= \rk F \ge 2$. Assume either $TM$ is spin or $F$ is spin. Let $g^F$ be a smooth metric on $F$. Let $\Phi\colon M\to \bbs^m$ be a smooth map of nonzero degree such that $k^F\ge 2|\wedge^2\!(\di\Phi|_F)\,|_{\tr}$.  Then there exists a leaf $\call$ such that $k^F = 2|\wedge^2\!(\di\Phi|_F)\,|_{\tr}$ on $\call$.
\end{thm}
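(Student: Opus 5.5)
We deduce \cref{main11} from \cref{main1} by the standard Llarull twisting construction: pull back the spinor bundle of the sphere by $\Phi$ and bound $|\calr^E_q|$ by $\frac12|\wedge^2(\di\Phi|_F)|_{\tr}$.

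\emph{Choice of the twisting bundle.} Assume first that $m$ is even. Let $S(\bbs^m)=S^+(\bbs^m)\oplus S^-(\bbs^m)$ be the spinor bundle of $(\bbs^m,g_0)$ with its Levi-Civita connection. On $M^\lozenge$ set $E_\pm=\Phi^*S^\pm(\bbs^m)$, pulled back through the projection $M^\lozenge\to M$ when $M^\lozenge\ne M$, with the pullback connection. Then
\[
\binn{\ch(E_+-E_-),[\bbs^m]}\ne0
\]
is concentrated in top degree. Hence $\binn{\widehat{A}(TM^\lozenge)\ch(E_+-E_-),[M^\lozenge]}$ equals $\deg\Phi\cdot\chi(\bbs^m)$ up to sign. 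For the extra circle factors we work, as in the proof of \cref{main1}, with the twisted index on $M$ itself: the product with circles only enters through the auxiliary index computation, and the characteristic number reduces to $\deg\Phi\cdot\binn{e(\bbs^m),[\bbs^m]}\neq 0$ up to a nonzero constant.

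If $m$ is odd, we replace $\Phi$ by $\Phi\times\mathrm{id}$ composed with a degree-one, suspension-type map $\bbs^m\times\bbs^1\to\bbs^{m+1}$ that is $\epsilon$-Lipschitz in the $\bbs^1$ direction after scaling the circle length. The leafwise data are unchanged, since $F$ only sees the $M$ factor. This produces an arbitrarily small error, which must be absorbed in the strict-inequality contradiction argument of \cref{main1}, not applied to \cref{main1} verbatim. Concretely, we rerun the proof of \cref{main1} with a perturbed bundle whose curvature term differs by $O(\epsilon)$. That proof only needs $k^F-4|\calr^E_q|\ge a>0$ on $U$ and $\ge -o(1)$ elsewhere, with $o(1)$ absorbed by the small-$\beta$ estimates.

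\emph{Curvature estimate.} At $x\in M$, choose $g^F$-orthonormal $\{\we_j\}$ of $F_x$ diagonalizing $\Phi^*g_0|_F$, and orthonormal $\{\epsilon_j\}$ in $T_{\Phi(x)}\bbs^m$ with $\di\Phi(\we_j)=\lambda_j\epsilon_j$. The curvature of $S(\bbs^m)$ is $R(\epsilon_i,\epsilon_j)=-\frac14\sum c_0(\epsilon_k)c_0(\epsilon_l)\langle R^{\bbs}\epsilon_i\epsilon_j\epsilon_k,\epsilon_l\rangle=-\frac12 c_0(\epsilon_i)c_0(\epsilon_j)$ up to sign conventions. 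Therefore
\[
\calr^E_q=\pm\frac12\sum_{i\ne j}\lambda_i\lambda_j\, c(\we_i)c(\we_j)\otimes \frac12 c_0(\epsilon_i)c_0(\epsilon_j).
\]
Each operator $c(\we_i)c(\we_j)\otimes c_0(\epsilon_i)c_0(\epsilon_j)$ is self-adjoint of norm one, so
\[
|\calr^E_q|\le\frac12\sum_{i<j}\lambda_i\lambda_j=\frac12|\wedge^2(\di\Phi|_F)|_{\tr}.
\]
This is the Llarull computation restricted to $F$. Hence $k^F\ge 2|\wedge^2(\di\Phi|_F)|_{\tr}\ge 4|\calr^E_q|$, and \cref{main1} gives a leaf $\call$ with $k^F=4|\calr^E_q|$ on $\call$.

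\emph{Main obstacle.} From $k^F=4|\calr^E_q|$ on $\call$ we only get $2|\wedge^2|_{\tr}\le 4|\calr^E_q|$, not equality with $2|\wedge^2|_{\tr}$ directly. The inequality chain $4|\calr^E_q|\le 2|\wedge^2|_{\tr}\le k^F=4|\calr^E_q|$ closes this, so equality holds throughout on $\call$.

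The remaining delicate point is the odd-$m$ case, because there the twisting is only approximate. To handle it, we prove the contrapositive as in \cref{main1}. Suppose every leaf contains a point where $k^F-2|\wedge^2|_{\tr}\ge 2a_\call>0$. Then the approximate bundle satisfies $k^F-4|\calr^E_q|\ge a-O(\epsilon)$ on the open set $U$ and $\ge -O(\epsilon)$ elsewhere. Choosing $\epsilon\ll\beta^2$-scaled constants in \eqref{keyesti}--\eqref{spe7} still yields \eqref{spe8}, which contradicts the nonzero index.
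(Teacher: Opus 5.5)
Your curvature estimate $|\calr^E_q|\le\frac12|\wedge^2(\di\Phi|_F)|_{\tr}$ matches the paper, and so does the closing chain $4|\calr^E_q|\le 2|\wedge^2|_{\tr}\le k^F=4|\calr^E_q|$ on the leaf. The choice of twisting bundle, however, has a genuine gap. Suppose $TM$ is not spin, $F$ is spin, $q$ is odd and $m$ is even. Then $M^\lozenge=M\times\bbs^1\times\bbs^1$, and your $E_\pm$ is pulled back from $M$. Since $T(\bbs^1\times\bbs^1)$ is trivial, $\widehat{A}(TM^\lozenge)\ch(E_+-E_-)$ is pulled back from the $m$-dimensional manifold $M$. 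Its pairing with $[M^\lozenge]$, which has degree $m+2$, therefore vanishes, so the hypothesis $\binn{\widehat{A}(TM^\lozenge)\ch(E_+-E_-),[M^\lozenge]}\ne0$ of \cref{main1} fails.

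Your workaround, ``work with the twisted index on $M$ itself'', is not available. The two circles are there because $S(F)$ has no $\bbz_2$-grading when $q$ is odd. In Case 4 of the proof of \cref{main1} the index lives on $\mm\times\bbs^1\times\bbs^1$ and is a characteristic number over $M\times\bbs^1\times\bbs^1$, which again vanishes for bundles coming from $M$. The fix is what the paper does in every case. Extend $\Phi$ to $\Phi_\lozenge\colon M^\lozenge\to\bbs^{\dim M^\lozenge}$ by composing $\Phi\times\id$ with one or two degree-one suspension maps $h\colon\bbs^k\times\bbs^1\to\bbs^{k+1}$. Then set $E_\pm=\Phi_\lozenge^*S_\pm(T\bbs^{\dim M^\lozenge})$. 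With this choice, $\deg\Phi_\lozenge=\deg\Phi$ and the characteristic number is $2(-1)^{\dim M^\lozenge/2}\deg\Phi\ne0$.

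Your treatment of odd $m$ also rests on a misdiagnosis: there is no approximation error. $\calr^E_q$ only involves $R^E(\we_i,\we_j)$ with $\we_i,\we_j\in F\subseteq TM\subseteq TM^\lozenge$, so the circle direction of $h$ never enters. You only need $h$ to be $1$-Lipschitz along the $\bbs^k$ factor. For example, take $(x,t)\mapsto(x\sin t,\cos t)$ on one half of the circle and a fixed meridian on the other half, with flat reparametrizations near the junctions. Then the trace-norm inequality $|AB|_{\tr}\le|A|\,|B|_{\tr}$ gives $|\wedge^2(\di\Phi_\lozenge|_F)|_{\tr}\le\pi_\lozenge^*|\wedge^2(\di\Phi|_F)|_{\tr}$ exactly, and \cref{main1} applies verbatim. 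Your chain then holds on the leaf $\call\subseteq M^\lozenge$, and $\pi_\lozenge(\call)$ is the desired leaf of $M$. Rerunning the proof of \cref{main1} with an $O(\epsilon)$ defect could be made to work, but it is unnecessary. Note also that $\epsilon$ must be small relative to the $\beta$-independent constant in \eqref{keyesti}, not relative to $\beta^2$.
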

\begin{proof}
    There exists a connected component of $M$ where the restriction of $\Phi$ has nonzero degree. So we can assume $M$ is connected without loss of generality. 
    
    Recall the auxiliary manifold $M^\lozenge$ defined by \eqref{tmspin} or \eqref{fspin} depending on whether $TM$ is spin or $F$ is spin. Let
\begin{equation}
    \pi_\lozenge\colon M^\lozenge\to M
\end{equation}
denote the natural projection. The curvature term $\calr^E_q$ is defined by \eqref{end1}. Let $F$ still denote its pullback to $M^\lozenge$. Recall that there exists a $1$-Lipschitz smooth map $h\colon \bbs^m\times\bbs^1 \to \bbs^{m+1}$ of degree one\footnote{For our purpose, it is enough to require that $h$ is $1$-Lipschitz along the direction of $\bbs^m$.}. Then the map $\Phi\colon M\to \bbs^m$ can be extended to a smooth map $\Phi_\lozenge\colon M^\lozenge\to \bbs^{\dim M^\lozenge}$ such that
\begin{equation}\label{deg1}
    \deg\Phi_\lozenge = \deg\Phi.
\end{equation}

Recall the trace norm inequality $|AB|_{\tr} \leq |A|\cdot |B|_{\tr}$ for $A,B\in \en{V_0}$, where $V_0$ is a vector space and $|A|$ is the operator norm. Then we have on $M^\lozenge$ that
\begin{equation}\label{tr32}
    |\wedge^2\!(\di\Phi_\lozenge|_F)\,|_{\tr} \leq \pi_\lozenge^*\bket{|\wedge^2\!(\di\Phi|_F)\,|_{\tr}}.
\end{equation}

Let $S(T\bbs^{\dim M^\lozenge}) = S_+(T\bbs^{\dim M^\lozenge})\oplus S_-(T\bbs^{\dim M^\lozenge})$ be the Hermitian bundle of spinors over the standard even-dimensional sphere $\bbs^{\dim M^\lozenge}$. In \cref{main1}, we take $E = E_+ \oplus E_-$ to be the pullback Hermitian vector bundle
\begin{equation}\label{defe31}
E_\pm = \Phi_\lozenge^*\bket{S_\pm(T\bbs^{\dim M^\lozenge})}.
\end{equation}
By \cite[(4.6)]{ll98} and \eqref{tr32}, we have on $M^\lozenge$ that
\begin{equation}\label{cur1}
    2|\calr^E_q| \le |\wedge^2\!(\di\Phi_\lozenge|_F)\,|_{\tr}.
\end{equation}
By \eqref{tr32}, \eqref{cur1} and $k^F \ge 2|\wedge^2\!(\di\Phi|_F)\,|_{\tr}$ on $M$, we have on $M^\lozenge$ that
\begin{equation}\label{eq}
    k^F - 4|\calr^E_q| \ge k^F - 2\pi_\lozenge^*(|\wedge^2\!(\di\Phi|_F)\,|_{\tr}) \ge 0.
\end{equation} 
Moreover, by \eqref{deg1} and \eqref{defe31}, we have
\begin{align}
    \begin{split}
        \quad\binn{\widehat{A}(TM^\lozenge)&\ch(E_+-E_-),[M^\lozenge]}\\
        &= \binn{\ch\!\bket{S_+(T\bbs^{\dim M^\lozenge}) - S_-(T\bbs^{\dim M^\lozenge})},[\bbs^{\dim M^\lozenge}]} \cdot \deg\Phi_\lozenge\\
        &= 2(-1)^{\dim M^\lozenge /2}\,\deg\Phi\ne 0.
    \end{split}
\end{align}

Then \cref{main1} implies that there exists a leaf $\call\subseteq M^\lozenge$ associated with $F$ such that $k^F = 4|\calr^E_q|$ on $\call$. It follows from \eqref{eq} that $k^F = 2\pi_\lozenge^*(|\wedge^2\!(\di\Phi|_F)\,|_{\tr})$ on $\call$. The result follows since $\pi_\lozenge(\call)$ is a leaf of $(M,F)$.
\end{proof}

As a corollary, we have:
\begin{proof}[Proof of \cref{t1.7}]
    Now we have 
    \begin{equation}\label{kf1}
        k^F\ge q(q-1).
    \end{equation}
    Since $\Phi$ is area non-increasing along $F$, we have $\lambda_i\lambda_j \le 1$ ($i<j$) in \eqref{deflam}. So
    \begin{equation}\label{kf2}
        |\wedge^2\!(\di\Phi|_F)\,|_{\tr} = \sum_{1\le i<j\le q}\lambda_i\lambda_j \le \frac{q(q-1)}{2}.
    \end{equation}
    It follows from \eqref{kf1}--\eqref{kf2} that $k^F\ge 2|\wedge^2\!(\di\Phi|_F)\,|_{\tr}$ on $M$. 
    
    By \cref{main11}, there exists a leaf $\call\subseteq M$ associated with $F$ such that
    \begin{equation}\label{eq2}
        k^F = 2|\wedge^2\!(\di\Phi|_F)\,|_{\tr}\quad\text{on }\call.
    \end{equation}
    By \eqref{kf1}--\eqref{eq2}, we have $\lambda_i\lambda_j \equiv 1$ $(1\le i<j\le q)$ on $\call$. Since $q\ge 3$, we have
    \begin{equation}
        \lambda_j \equiv 1\quad \text{on } \call.
    \end{equation}
    So $(\Phi|_\call)^*g_0 = g^F|_\call$, where $g_0$ is the standard metric on the unit sphere $\bbs^m$.
\end{proof}

\section{Long Neck Foliated Problem}\label{4}

In this section, we first prove a strict leafwise distance version of \cref{t1.4}. Then \cref{t1.4} follows by using conformal deformations (see \cref{cfm}). As a corollary, we also obtain \cref{t1.2,t1.5}.
\subsection{Penrose Operator along the Leaves}\label{4.1}
    Under the settings of \cref{2.2}, recall the metric $g^{T\calm}_\bv$ defined in \eqref{met3}. Let $\{e_j\}_{j=1}^n$ be a local orthonormal frame on the Connes fibration $(\calm,g^{T\calm}_\bv)$ such that $\{e_j\}_{j=1}^q, \{e_j\}_{j=q+1}^m, \{e_j\}_{j=m+1}^n$ span the subbundles $\calf, \f, \ff$ respectively, where $q=\rk \calf$, $m=\rk(\calf\oplus \f)$ and $n=\dim \calm$. Suppose $q= \rk F\ge 2$.

    Let $\cals = \cals_+\oplus \cals_-$ be a $\bbz_2$-graded Hermitian vector bundle over $\calm$. Suppose there exists a family of Hermitian connections $\nabla^\bv$ on $\cals$ and a family of Clifford actions $c_\bv\colon T\calm \to \en{\cals}$ such that $(\cals,\nabla^\bv,c_\bv)$ is a $\bbz_2$-graded Clifford module\footnote{That is, (1) the splitting $\cals = \cals_+\oplus \cals-$ is orthogonal under the Hermitian metric on $\cals$; (2) the Hermitian connection $\nabla^\bv$ is even; (3) the Clifford action $c_\bv(X)$ is odd and skew-adjoint; (4)
    \begin{equation*}
        c_\bv(X)\,c_\bv(Y) + c_\bv(Y)\,c_\bv(X) = - 2g^{T\calm}_\bv(X,Y)
        \quad\text{and}\quad
        [\nabla^\bv_X,c_\bv(Y)] = c_\bv\bket{\nabla^{T\calm,\bv}_X Y},
    \end{equation*}
    where $X,Y\in\Gamma(T\calm)$ and $\nabla^{T\calm,\bv}$ is the Levi-Civita connection of $g^{T\calm}_\bv$.} on $(\calm,g^{T\calm}_\bv)$. Following \cite[(4.2)~and~(4.11)]{chen}, for $X\in\Gamma(\calf)$, we define the partial sum of Dirac operator and the Penrose operator along $\calf$, respectively, by
    \begin{equation}\label{dpdef}
        D_{\bv;q} = \sum_{j=1}^q c_\bv(e_j)\nabla^\bv_{e_j}
        \quad \text{and} \quad
        P^{\bv;q}_X = \nabla^\bv_X + \frac{1}{q}c_\bv(X)D_{\bv;q}.
    \end{equation}
    
    We add ``$\bv$" as a subscript or superscript to indicate geometric objects associated with the metric. The omission of ``$\bv$" corresponds to the case where $\beta = \varepsilon = 1$. We also add ``$q$" as a subscript or superscript to indicate geometric objects associated with the foliation.

    Let $A_\bv \in \Gamma(\en{\cals})$ be a family of bundle endomorphisms. 
    As in \cite[Lem.~4.3]{chen}, for any $u\in\Gamma(\cals)$, we observe that\footnote{The operator $P^{\bv;q}$ is also the Penrose operator along $\calf$ of the connection $\nabla^\bv - \frac{1}{q}c_\bv(\cdot) A_\bv$.}
    \begin{align}\label{penrose}
        \begin{split}
            \sum_{j=1}^q\babs{P^{\bv;q}_{e_j}u}^2 
            &= \sum_{j=1}^{q}
            \bbabs{\nabla^\bv_{e_j}u + \frac{1}{q}c_\bv(e_j)D_{\bv;q}u}^2\\
            &= \sum_{j=1}^{q}
            \bbabs{\bbket{\nabla^\bv_{e_j} - \frac{1}{q}c_\bv(e_j) A_\bv}u + \frac{1}{q}c_\bv(e_j)\bket{D_{\bv;q} + A_\bv}u}^2\\
            &= \sum_{j=1}^{q}\bbabs{\bbket{\nabla^\bv_{e_j} - \frac{1}{q}c_\bv(e_j) A_\bv}u}^2 - \frac{1}{q}\babs{(D_{\bv;q} + A_\bv) u}^2.
        \end{split}
    \end{align}

    Let $R^\bv = (\nabla^\bv)^2$ denote the curvature of the connection $\nabla^\bv$ on $\cals$. Recall the bundle endomorphism $\calr^\bv$ defined in \eqref{calr}. Similarly, we define
    \begin{equation}\label{calrq}
        \calr^\bv_q = \frac{1}{2}\sum_{i,j=1}^q c_\bv(e_i)\,c_\bv(e_j)\,R^\bv(e_i,e_j).
    \end{equation}
    
    If $\partial M\ne \varnothing$, further assume that the integrable subbundle $F$ is transverse to $\partial M$. We can require the chosen splitting \eqref{split} satisfies $F^\perp|_{\partial M} \subseteq T\partial M$. Then we have $\f\oplus\ff\subseteq T\partial\calm$. Let $\nu$ denote the inward unit normal  on $\partial\calm$ associated with the metric $g^{T\calm}_\bv$. Then $\nu\in\Gamma(\calf)$. Assume the chosen local orthonormal frame $\{e_j\}^{n}_{j=1}$ satisfies $e_1 = \nu$ on $\partial \calm$. 
    
    Following \cite[Lem.~2.1--2.2]{gilkey93}, we define $D^\partial_{\bv;a}\colon \Gamma(\cals|_{\partial \calm}) \to \Gamma(\cals|_{\partial \calm})$ by
    \begin{equation}
        D^\partial_{\bv;a} = \sum_{j=2}^{a}c_\bv(e_j)\,c_\bv(\nu) \bbket{\nabla^\bv_{e_j} + \frac{1}{2}c_\bv\bket{\nabla^{TM,\bv}_{e_j}\nu}\,c_\bv(\nu)}
    \end{equation}
    for $a\in\{q,n\}$; see \cite[(3.7)~and~(4.3)]{chen}. Set $D^{\partial}_\bv = D^\partial_{\bv;n}$. Recall that the (partial) mean curvature is defined by
    \begin{equation}\label{hdef}
        H^{T\calm}_{\bv;a}=-\frac{1}{a-1}\sum_{j=2}^{a}\inn{\nabla^{T\calm,\bv}_{e_j}\nu,e_j},\quad a\in\{q,n\}.
    \end{equation}

    In what follows, the integrations are taken with respect to the Riemannian measure $\di\mu_\bv$ induced by $g^{T\calm}_\bv$. We will omit $\di\mu_\bv$ for simplicity.
\begin{prop}\label{lem3}
    Let $(M,F)$ be a foliated manifold such that $F$ is transverse to $\partial M$, with $q =\rk F\ge 2$. Let $(\cals,\nabla^\bv,c_\bv)$ be a $\bbz_2$-graded Clifford module on the Connes fibration $(\calm,g^{T\calm}_\bv)$. Under above settings, further assume that $A_\bv\in\Gamma(\en{\cals})$ are odd and self-adjoint. Let $L_0\subseteq \calm$ be a compact subset. Then the following formula holds uniformly for all sufficiently small $\bv>0$ and all $u\in\Gamma_\cm(\cals)$ supported in $L_0$:
\begin{align}\label{qton}
    \begin{split}
        \int_\calm \bbac{&\babs{\bket{D_{\bv;q} + A_\bv} u}^2 - \babs{\bket{D_\bv + A_\bv} u}^2}\\
        &= \int_\calm \bbac{ O(\beta)\sum_{j=1}^{q} \babs{P^{\bv;q}_{e_j} u}^2- \sum_{j=q+1}^{n} \babs{\nabla^\bv_{e_j} u}^2} \\
        &\qquad\quad+ \int_\calm\bbinn{u,\bbket{\bket{1+O(\beta)}\calr_q^\bv-\calr^\bv + O\bket{\beta^{-1}}}u}\\
        &\qquad\quad + \int_\calm \bbac{\binn{u, \bcu{D_{\bv;q}-D_{\bv}\,, A_\bv}u} + O(1)|u||A_\bv u|}\\
        &\qquad\quad+ \int_{\partial \calm} \bbinn{u,\bbket{D^\partial_\bv - \bket{1+O(\beta)} D^\partial_{\bv;q} + O(1)}u}.        
    \end{split}
\end{align}
Here, the $O(\cdot)$ terms denote possibly complex-valued scalar quantities that may depend on $u$, with bounding constants independent of $u$.
\end{prop}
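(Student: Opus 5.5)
Expand both squares and compare. Since $A_\bv$ is odd and self-adjoint and $c_\bv(e_j)$ is skew-adjoint, the integrals of $|(D_{\bv;q}+A_\bv)u|^2$ and $|(D_\bv+A_\bv)u|^2$ differ by three kinds of terms:
\[
\|D_{\bv;q}u\|^2-\|D_\bv u\|^2,
\]
the cross terms $2\re\inn{D_{\bv;q}u,A_\bv u}-2\re\inn{D_\bv u,A_\bv u}$, and boundary contributions coming from integration by parts. The plan has three steps.

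\emph{Cross terms.} I would integrate by parts so that the pointwise cross term becomes $\inn{u,[D_{\bv;q}-D_\bv,A_\bv]u}$. The formal adjoint of $D_{\bv;q}$ is not exactly $D_{\bv;q}$. Its divergence defect is $\sum_{j\le q} c_\bv(\nabla^{\bv}_{e_j}e_j)$ projected off $\calf$, and for the leafwise part this is a bounded quantity. Hence the error is $O(1)|u||A_\bv u|$, plus a boundary term $\inn{u,c_\bv(\nu)A_\bv u}$. This boundary term should cancel between the $D_{\bv;q}$ and $D_\bv$ contributions, because $\nu\in\calf$ enters both in the same way.

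\emph{Quadratic parts.} For $\int|D_\bv u|^2$ I use the Lichnerowicz formula with boundary (Gilkey, \cite[Lem.~2.1--2.2]{gilkey93}):
\[
\int|D_\bv u|^2=\int\Big(\sum_{j=1}^n|\nabla^\bv_{e_j}u|^2+\inn{u,\calr^\bv u}\Big)+\int_{\partial\calm}\inn{u,(D^\partial_\bv+O(1))u}.
\]
For the partial operator I follow \cite[Lem.~4.3--4.11]{chen}. The partial Weitzenböck identity gives
\[
\int|D_{\bv;q}u|^2=\int\Big(\sum_{j\le q}|\nabla^\bv_{e_j}u|^2+\inn{u,\calr^\bv_q u}\Big)+\text{boundary term }D^\partial_{\bv;q}+\text{errors}.
\]
Because the foliation is almost isometric, the commutators $[\nabla_{e_i},\nabla_{e_j}]$ for $i,j\le q$ have normal components of size $O(\beta^{-1})$, not $O(\beta^{-2})$; this produces the $O(\beta^{-1})$ term in the curvature part. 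Subtracting the two identities leaves $-\sum_{j>q}|\nabla^\bv_{e_j}u|^2$ together with the leafwise gradient terms. Those gradient terms I then express through the Penrose identity \eqref{penrose}. This conversion is what allows the factors $(1+O(\beta))$ on $\calr^\bv_q$ and on $D^\partial_{\bv;q}$ to appear, and it generates the $O(\beta)\sum_j|P^{\bv;q}_{e_j}u|^2$ term. That is, I would write the partial identity in the form ``$\frac{q-1}{q}\int|D_{\bv;q}u|^2=\int\sum_j|P_{e_j}u|^2+\dots$'' and absorb the error terms, which carry relative size $O(\beta)$, using the Penrose norm. Uniformity in $u$ supported in $L_0$ comes from compactness of $L_0$, which bounds all geometric quantities by the scaling formulas \cite[(1.45)--(1.46)]{zhang17}.

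\emph{Main obstacle.} The hard step is controlling the non-leafwise error terms inside the partial Weitzenböck formula. These are mixed terms of the form $\inn{\nabla_{e_j}u,c(\cdot)u}$ with coefficients of order $\beta^{-1}$ arising from $\f$-components of $\nabla_{e_i}e_j$. They must be absorbed as $O(\beta)|P u|^2+O(\beta^{-1})|u|^2$ via Cauchy--Schwarz, or else cancelled by the $O(1)|u||A_\bv u|$ slack. I would first try to reproduce the computation in \cite[Thm.~4.11]{chen} verbatim, now keeping the boundary terms and the $A_\bv$ terms. The boundary should be harmless, since $\f\oplus\ff\subseteq T\partial\calm$ means the normal derivative only ever pairs with $\nu\in\calf$.
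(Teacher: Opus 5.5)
Your skeleton matches the paper's proof: Green's formula for the cross terms, then the full and partial Weitzenb\"ock formulas with boundary, then the Penrose identity with $A_\bv=0$. In the cross terms, the divergence defect of $D_{\bv;q}$ gives $O(1)|u||A_\bv u|$, and the two boundary terms $\inn{u,c_\bv(\nu)A_\bv u}$ cancel. The Penrose identity trades the leftover $O(\beta)\sum_{j\le q}|\nabla^\bv_{e_j}u|^2$ for $O(\beta)\sum_{j\le q}|P^{\bv;q}_{e_j}u|^2$ plus lower order.

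\textbf{The boundary gap.} The boundary bookkeeping, which is the whole point of this version, has a genuine gap.
\begin{itemize}
\item The boundary term of the full formula is not $D^\partial_\bv+O(1)$. It is $\frac{n-1}{2}H^{T\calm}_\bv-D^\partial_\bv$, and on $L_0\cap\partial\calm$ one has $H^{T\calm}_\bv=\frac{(q-1)H^\calf}{(n-1)\beta}+O(\beta\varepsilon^2)$. This is of order $\beta^{-1}$ wherever the leafwise boundary mean curvature $H^\calf$ is nonzero.
\item The partial formula likewise carries $\frac{q-1}{2}H^{T\calm}_{\bv;q}-D^\partial_{\bv;q}$, with $H^{T\calm}_{\bv;q}=H^\calf/\beta$ exactly.
\item The $O(1)$ in the boundary integral of the statement holds only because these two $\beta^{-1}$-size terms cancel up to $O(\beta\varepsilon^2)$. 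This mean curvature comparison (\cite[Prop.~4.9]{chen}) is the missing step.
\item You need it a second time in the Penrose step. Substituting the Penrose form of the partial identity with an $O(\beta)$ coefficient produces $O(\beta)\cdot\frac{q-1}{2}H^{T\calm}_{\bv;q}$ on $\partial\calm$. That is $O(1)$ only because $H^{T\calm}_{\bv;q}$ is exactly $H^\calf/\beta$.
\end{itemize}
Your remark that $\f\oplus\ff\subseteq T\partial\calm$ makes the boundary harmless points at the right mechanism. Indeed $(n-1)H^{T\calm}_\bv-(q-1)H^{T\calm}_{\bv;q}=\sum_{j>q}\inn{\nu,\nabla^{T\calm,\bv}_{e_j}e_j}$ involves only tangential transverse directions. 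Up to sign, it is the $\nu$-component of the term $\sum_{k>q}\nabla^{T\calm,\bv}_{e_k}e_k$ (projected to $\calf$), which you also left out of your description of the divergence defect of $D_{\bv;q}$. But the cancellation has to be stated and proved or cited. As displayed, your full boundary formula is false.

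\textbf{Two smaller points.}
\begin{enumerate}
\item \emph{Sign of the boundary operator.} With $D^\partial_{\bv;a}$ as defined (inward $\nu$), it enters both Weitzenb\"ock formulas with a minus sign. With your $+D^\partial_\bv$, and the same convention in the partial formula, the difference comes out as $-D^\partial_\bv+(1+O(\beta))D^\partial_{\bv;q}$. That is the opposite of the statement.
\item \emph{Numerology in your ``main obstacle'' paragraph.} It does not close. A first-order mixed term with coefficient of order $\beta^{-1}$ can only be absorbed as $\beta|\nabla u|^2+O(\beta^{-3})|u|^2$. The structure $(1+O(\beta))\sum_{j\le q}|\nabla^\bv_{e_j}u|^2+O(\beta^{-1})|u|^2$ arises for two reasons:
\begin{itemize}
\item the relevant coefficients are $O(1)$ in $g^{T\calm}_\bv$ on compact sets (cf.\ \cite[Lem.~1.3]{zhang17}, the same input that bounds the divergence defect); these are the $(\f\oplus\ff)$-components of $\nabla^{T\calm,\bv}_{e_i}e_j$ for $i,j\le q$ and the $\calf$-components of $\nabla^{T\calm,\bv}_{e_k}e_k$ for $k>q$;
\item the resulting mixed terms involve only leafwise derivatives $\nabla^\bv_{e_j}u$, $j\le q$.
\end{itemize}
Since you plan to import the interior computation from \cite{chen} anyway, this is not fatal, but your stated reason for it is wrong.
\end{enumerate}
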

\begin{proof}
    Since $D_\bv$ and $A_\bv$ are odd, we have $[D_\bv,A_\bv] = D_\bv A_\bv + A_\bv D_\bv$. By the Green's formula (see, e.g., \cite[(3.24)]{chen}), we have
    \begin{align}\label{deform1}
        \begin{split}
            \int_\calm|(D_\bv+A_\bv)u|^2 &= \int_\calm \bbac{|D_\bv u|^2 + \binn{u,[D_\bv,A_\bv]u}+ |A_\bv u|^2}\\
            &\qquad\qquad + \int_{\partial\calm}\binn{u,c_\bv(\nu)A_\bv u}.
        \end{split}
    \end{align}
    By the Bochner--Weitzenböck formula (see, e.g., \cite[(3.14)]{chen}), we have 
    \begin{align}\label{deform2}
        \begin{split}
            \int_\calm |D_\bv u|^2
            &= \int_\calm \bbac{\sum_{j=1}^n\babs{\nabla^\bv_{e_j}u}^2 +\inn{u,\calr^\bv u}}\\
            &\qquad\qquad + \int_{\partial\calm}\bbinn{u,\bbket{\frac{n-1}{2}H^{T\calm}_\bv - D^\partial_\bv} u},
        \end{split}
    \end{align}

    Let $p\colon TM\to \calf$ and $p^\perp\colon TM\to \f\oplus\ff$ be the orthogonal projections. Recall that $\nu\in\Gamma(\calf|_{\partial\calm})$. As in \cite[(B.4)]{chen}, for $v_1,v_2\in\Gamma_\cm(S)$, we have
\begin{align}\label{greenq}
    \begin{split}
        \int_\calm\inn{D_{\bv;q} \,v_1,v_2} 
        &= \int_\calm\bbinn{v_1, c_\bv\bbket{\sum_{j=1}^{q}p^\perp \nabla^{T\calm,\bv}_{e_j}e_j - \sum_{k=q+1}^{n}p\nabla^{T\calm,\bv}_{e_k}e_k}v_2}\\
        &\qquad + \int_\calm\binn{v_1,D_{\bv;q}\,v_2} + \int_{\partial \calm} \binn{v_1,c_\bv(\nu)\,v_2}.
    \end{split}
\end{align}
It follows from \eqref{greenq} and \cite[Lem.~1.3]{zhang17} (cf.\ \cite[Lem.~4.8~and~(4.35)]{chen}) that
\begin{align}\label{deform3}
    \begin{split}
        \int_\calm\inn{D_{\bv;q}\, u,A_\bv u} &= \int_\calm\binn{u,D_{\bv;q}A_\bv u} + O(1)\int_\calm |u||A_\bv u|\\
        &\qquad + \int_{\partial \calm} \binn{u,c_\bv(\nu)A_\bv u}.
    \end{split}
\end{align}
It follows from \eqref{deform3} that
    \begin{align}\label{deform4}
        \begin{split}
            \int_\calm|(D_{\bv;q}+A_\bv)u|^2 &= \int_\calm \bbac{|D_{\bv;q}\, u|^2+ \binn{u,[D_{\bv;q},A_\bv]u} +|A_\bv u|^2}\\
            &\qquad  + O(1)\int_\calm |u||A_\bv u| + \int_{\partial\calm}\binn{u,c_\bv(\nu)A_\bv u}.
        \end{split}
    \end{align}
By \cite[(4.40)]{chen}, we have 
    \begin{align}\label{deform5}
        \begin{split}
            \int_\calm |D_{\bv;q}\,u|^2 &= \bket{1+O(\beta)}  \sum_{j=1}^{q}\int_\calm \babs{\nabla^\bv_{e_j} u}^2 + \int_\calm\binn{u, \bket{\calr^\bv_{q} + O\ket{\beta^{-1}}} u} \\ 
            &\qquad\qquad + \int_{\partial \calm} \bbinn{ u,\bbket{\frac{q-1}{2}H^{T\calm}_{\bv;q} - D^\partial_{\bv;q} + O(1)} u}.
        \end{split}
    \end{align}
    By \eqref{deform1} and \eqref{deform4}, we obtain
    \begin{align}\label{deform6}
    \begin{split}
        \int_\calm \bbac{\babs{&\bket{D_{\bv;q} + A_\bv} u}^2 - \babs{\bket{D_\bv  + A_\bv} u}^2}\\
        &=\int_\calm \bbac{\babs{D_{\bv;q}\,u}^2 - \babs{D_\bv u}^2}\\
        &\qquad + \int_\calm \bbac{\binn{u, \bcu{D_{\bv;q}-D_{\bv}\,, A_\bv}u} + O(1)|u||A_\bv u|}.
    \end{split}
    \end{align} 

    Let $H^\calf$ denote the leafwise boundary mean curvature on $\partial \calm$ associated with the metric $g^\calf$ and inward unit normal. By \cite[Prop.~4.9]{chen}, we have
    \begin{equation}\label{deform7}
        H^{T\calm}_{\bv;q} = \frac{H^\calf}{\beta} 
        \text{ \;and \;}
        H^{T\calm}_{\bv} = \frac{(q-1)H^\calf}{(n-1)\beta} + O(\beta\varepsilon^2) \quad \text{on }L_0\cap\partial \calm.
    \end{equation}
    By \eqref{deform2}, \eqref{deform5} and \eqref{deform7}, we obtain 
    \begin{align}\label{deform8}
    \begin{split}
        \int_\calm \bbac{\babs{D_{\bv;q}\,u}^2 - \babs{D_\bv u}^2}
        &= \bbbac{O(\beta)\sum_{j=1}^{q} - \sum_{j=q+1}^{n}} \int_\calm \babs{\nabla^\bv_{e_j} u}^2\\
        &\qquad+ \int_\calm\bbinn{u,\bbket{\calr_q^\bv-\calr^\bv + O\bket{\beta^{-1}}}u}\\
        &\qquad + \int_{\partial \calm} \bbinn{u,\bbket{D^\partial_\bv - D^\partial_{\bv;q} + O(1)}u}.
    \end{split}
    \end{align}
    By \cite[(4.42)]{chen}, we have
    \begin{align}\label{deform9}
        \begin{split}
            \sum_{j=1}^{q}\int_\calm \babs{\nabla^\bv_{e_j} u}^2 &= \frac{1+O(\beta)}{q-1}\int_\calm\!\bbac{q\sum_{j=1}^{q}  \babs{P^{\bv;q}_{e_j} u}^2 + \binn{u, \bket{\calr^\bv_{q} + O\ket{\beta^{-1}}} u}} \\ 
            & \quad+\frac{1+O(\beta)}{q-1}\int_{\partial \calm} \bbinn{u,\bbket{\frac{q-1}{2}H^{T\calm}_{\bv;q} - D^\partial_{\bv;q} + O(1)}u}.
        \end{split}
    \end{align}
    By \eqref{deform7}, we have $O(\beta)\cdot H^{T\calm}_{\bv;q} = O(1)$ on $L_0\cap\partial \calm$. Then \eqref{deform8}--\eqref{deform9} implies that
    \begin{align}\label{deform10}
    \begin{split}
        \int_\calm \bbac{\babs{&D_{\bv;q}\,u}^2 - \babs{D_\bv u}^2}\\
        &= O(\beta)\sum_{j=1}^{q} \int_\calm \babs{P^{\bv;q}_{e_j} u}^2 - \sum_{j=q+1}^{n} \int_\calm \babs{\nabla^\bv_{e_j} u}^2\\
        &\qquad\quad+ \int_\calm\bbinn{u,\bbket{\bket{1+O(\beta)}\calr_q^\bv-\calr^\bv + O\bket{\beta^{-1}}}u}\\
        &\qquad\quad + \int_{\partial \calm} \bbinn{u,\bbket{D^\partial_\bv - \bket{1+O(\beta)} D^\partial_{\bv;q} + O(1)}u}.
    \end{split}
    \end{align}
    The result follows from \eqref{deform6} and \eqref{deform10}.
    \end{proof}
    \begin{rem}\label{rem3}
        Since $(\calm^\lozenge,\calf)$ is almost isometric with respect to \eqref{met3} (or the metrics on $\calm^\lozenge$ in similar forms as \eqref{met5}, \eqref{met6}, \eqref{met71}), \cref{lem3} also holds for Clifford modules over $(\calm_r^\lozenge,g^{T\widetilde{M}_r^\lozenge}_\bv)$.
    \end{rem}

\subsection{Quantitative Infimum Estimate of Leafwise Scalar Curvature}\label{4.2}
\begin{defn}[\cite{gl83}; cf.\ \cite{zhang23,cz24}]
    Let $M$ be a smooth manifold with or without boundary. Let $K\subseteq \inter M$ be a compact subset in the interior of $M$. Let $E=E_+\oplus E_-$ be a $\bbz_2$-graded Hermitian vector bundle over $M$ equipped with an even Hermitian connection $\nabla^E = \nabla^{E_+} \oplus \nabla^{E_-}$. We say $E$ is a \emph{Gromov--Lawson pair} with support $K$ if there exists a smooth, parallel, unitary bundle isomorphism
    \begin{equation}\label{iso1}
        \mathfrak{t}\colon E_+|_{M\setminus K}\to E_-|_{M\setminus K}.
    \end{equation}
\end{defn}

Recall that $\sat(\cdot)$ denotes the saturation of a given subset associated with a foliation. Recall the even-dimensional manifold $M^\lozenge$ defined in \eqref{tmspin}--\eqref{fspin} and the self-adjoint bundle endomorphism $\calr^E_q$ defined in \eqref{end1}. For $a\in\bbr$, the condition
\begin{equation}
k^F + 4\calr^E_q \ge a\quad(\text{resp. }>a)
\end{equation}
means that for all  $0\ne u\in S(F)\otimes E$ or all $0\ne u\in S(TM)\otimes E$,
\begin{equation}
    \binn{u,\ket{k^F + 4\calr^E_q}u}\ge a|u|^2 \quad(\text{resp. }>a|u|^2). 
\end{equation}

The following theorem is a quantitative infimum estimate of the leafwise scalar curvature on noncompact, connected, oriented, complete, foliated manifolds.
\begin{thm}\label{leaf21}
Let $(M,g)$ be a noncompact, connected, oriented, complete Riemannian manifold without boundary. Let $F\subseteq TM$ be a smooth integrable subbundle of rank $q\ge 2$. Assume either $TM$ or $F$ is spin. Define $g^F = g|_F$ on $F$. Let $L \subseteq W$ be compact subsets of $M^\lozenge$. For $\alpha>0$ and $l\in (0,\frac{\pi}{\alpha q})$, suppose $E=E_+\oplus E_-$ is a Gromov--Lawson pair over $M^\lozenge$ with support $K$ satisfying
\begin{enumerate}[label=\textup{(\roman*)},ref=\textup{\roman*}]
    \item $L\subseteq K\subseteq W \cap \sat(L)$ and $\dist_F(K,\partial W) > l$;\label{5.4i}
    \item $\bket{k^F + 4\calr^{E}_q}\big|_{L} > 0$ and $\bket{k^F + 4\calr^{E}_q}\big|_{K\setminus L} \ge 0$;\label{5.4ii}
    \item $\bket{k^F + 4\calr^{E}_q}\big|_{W\setminus K} \geq \alpha^2 q(q-1)$ and $\bket{k^F + 4\calr^{E}_q}\big|_{M^\lozenge\setminus W} \ge -\alpha^2 q(q-1)\tan^2\ket{\frac{\alpha q l}{2}}$.\label{5.4iii}
\end{enumerate}
Then we have
\begin{equation}\label{result3}
    \binn{\widehat{A}\ket{TM^\lozenge}\ch (E_+-E_-),[M^\lozenge]} = 0.
\end{equation}
\end{thm}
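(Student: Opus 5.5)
We argue by contradiction: assume the pairing in \eqref{result3} is nonzero. We then build a Callias-type deformation of the Zhang sub-Dirac operator on the doubled Connes fibration $\mm^\lozenge$, whose index equals that pairing, and show that it is invertible. The model is the proof of \cref{main1}, with two additions taken from Cecchini--Zeidler: a potential built from the Gromov--Lawson involution $\mathfrak{t}$, and the Penrose refinement of \cref{lem3}, which supplies the constant $\frac{q}{q-1}$.

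\textbf{Step 1: the potential.} Let $\sigma$ be the odd self-adjoint involution on $E|_{M^\lozenge\setminus K}$ built from $\mathfrak t$ and $\mathfrak t^*$. It is parallel, so it commutes with $\calr^E_q$ and anticommutes with Clifford multiplication after the usual grading twist.
\begin{enumerate}[label=\textup{(\alph*)}]
\item Since $\dist_F(K,\partial W)>l$ strictly, we can pick $l'\in(l,\dist_F(K,\partial W))$ and a smooth function $d\colon M^\lozenge\to[0,l']$ with $d=0$ near $K$, $d=l'$ outside $W$, and leafwise gradient $|\di d|_F|\le 1+\delta$.
\item To get $d$, start from the leafwise distance to $K$, truncate it, and smooth it by flowing along finitely many leafwise vector fields as in the proof of \cref{lem1}. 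Transverse smoothness is irrelevant, because transverse derivatives are rescaled by $\varepsilon$ in $g^{T\mm}_\bv$.
\item Set $\psi=\frac{\alpha(q-1)}{2}\tan\bket{\frac{\alpha q}{2}d}$. This solves $\frac{q-1}{q}\psi'=\psi^2+\frac{\alpha^2(q-1)^2}{4}$ up to constant normalizations, which I would fix by matching the curvature threshold $\alpha^2q(q-1)$.
\item Lift $\psi$ and $\sigma$ to $\mm^\lozenge$ and define
\[
\calb=h_\delta D_\bv h_\delta+\beta^{-1}\bket{V+\psi\,\sigma}.
\]
\end{enumerate}
The relative (Callias/Gromov--Lawson) index theorem, combined with the argument behind \eqref{contradict} or \eqref{ind1}, identifies $\ind\calb_+$ with the left-hand side of \eqref{result3} (times $2^{(m-q)/2}$ in the $F$-spin case). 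This uses that $\psi\sigma$ is invertible at infinity and that $\psi$ has compact gradient support.

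\textbf{Step 2: the leafwise estimate.} As in \eqref{spe1}--\eqref{spe5}, split $u$ by $\varphi_1,\varphi_2$; the far part $\varphi_2u$ and the $\caly$-part are handled exactly as before. For $u_1$, apply \cref{lem3} with $A_\bv=\beta^{-1}\psi\sigma$, which is legitimate since $\partial\calm=\varnothing$ and $u_1$ has compact support. Then use \eqref{penrose} and \eqref{deform9} to get
\[
\|(D_{\bv;q}+A_\bv)u_1\|^2\ge \tfrac{q}{q-1}\int\binn{u_1,\bbket{\tfrac{k^F+4\calr^E_q}{4\beta^2}+\tfrac{\psi^2}{\beta^2}}u_1}-\int\tfrac{|\di\psi|_F}{\beta^2}|u_1|^2+\text{error},
\]
where the error is $O(\beta^{-1})+O(\varepsilon^2\beta^{-2})+O(\beta^{-2}r^{-1})$. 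The commutator $[D_{\bv;q}-D_\bv,A_\bv]$ is $O(\varepsilon\beta^{-1})$, because the transverse derivatives of $\psi$ carry a factor $\varepsilon$. This yields a pointwise leafwise potential inequality, evaluated region by region.
\begin{itemize}
\item On $W\setminus K$: the ODE for $\psi$ and condition \eqref{5.4iii} make the bracket $\ge c\delta'>0$ after absorbing the factor $1+\delta$.
\item Outside $W$: $\psi$ is the constant $\frac{\alpha(q-1)}{2}\tan\frac{\alpha q l'}{2}$, which exceeds the value at $l$. Hence $k^F+4\calr^E_q+4\psi^2>0$ there, strictly and uniformly.
\item On $K$: $\psi=0$ and the bracket is only $\ge0$, with strict positivity on $L$.
\end{itemize}

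\textbf{Step 3: closing the gap on $K$.} Here I keep a fraction of the Penrose term $\sum_j|P^{\bv;q}_{e_j}u_1|^2$ and apply \cref{lem2} with $E=\cals_\bv$ and connection $\nabla^\bv-\frac1q c_\bv(\cdot)A_\bv$, whose Penrose operator is $P^{\bv;q}$. The bound \eqref{acond2} holds with $a_0=\sup\psi$. This gives $\frac{C}{\beta^2}\int_{K}|u_1|^2\lesssim \|P u_1\|^2+\frac{1}{\beta^2}\int_{U}|u_1|^2$, where $U$ is a neighborhood of $L$ (or any open set where the bracket is already strictly positive). The hypothesis $K\subseteq\sat(L)$ is exactly what \cref{lem2} requires. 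Choosing $r$ large, then $\beta=\varepsilon$ small, then $\delta$ small gives $\|\calb u\|\ge C_0\|u\|$, so $\ind\calb_+=0$, a contradiction. The $F$-spin cases follow by the modifications of \cref{3.3} together with \cref{rem3}.

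\textbf{Expected obstacle.} The hardest step is making the leafwise potential argument honest. Specifically, one must:
\begin{itemize}
\item construct $d$ smooth with leafwise gradient $\le1+\delta$ while keeping the strict margin $l'>l$, using only leafwise distance (which need not be transversally continuous);
\item balance the three regions simultaneously, so that the Penrose remainder spent in \cref{lem2} on $K$ does not destroy the sharp constant $\frac{q}{q-1}$ used on $W\setminus K$.
\end{itemize}
I would first try using a cutoff that confines the Poincaré step to a small neighborhood of $K$, where $\psi$ and $|\di\psi|$ are tiny, and spend only an $\eta$-fraction of the Penrose term there.
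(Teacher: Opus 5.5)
Your architecture (doubled Connes fibration, a Callias potential built from $\mathfrak t$, the relative index theorem, the Penrose refinement, and \cref{lem2} on $K$) is the paper's. However, the quantitative core of Step~2 is wrong. The factor $\frac{q}{q-1}$ multiplies only the curvature term. Up to the $O(\cdot)$ errors, \eqref{lich}--\eqref{cur8} give
\[
\|(D_\bv+A_\bv)u_1\|^2\ge\frac{q}{q-1}\int_{\ms}\Bigl\langle u_1,\frac{k^\calf+4\beta^2\calr^{\cale,\bv}_q}{4\beta^2}u_1\Bigr\rangle+\int_{\ms}\bigl\langle u_1,\bigl(A_\bv^2+[D_\bv,A_\bv]\bigr)u_1\bigr\rangle+\text{errors},
\]
so the potential enters as $\beta^{-2}(\psi^2-|\di\psi|_F)$ with coefficient $1$. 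Your displayed inequality, with $\frac{q}{q-1}\psi^2$, is false. Already for a parallel spinor on a flat torus and constant $\psi$ one has $\|(D+\psi\sigma)u\|^2=\psi^2\|u\|^2$.

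Consequently your ODE and your $\psi=\frac{\alpha(q-1)}{2}\tan(\frac{\alpha q}{2}d)$ are mis-normalized. Even granting your inequality, the region outside $W$ fails. There
\[
k^F+4\calr^E_q+4\psi^2\ge\alpha^2(q-1)\bigl[(q-1)\tan^2(\tfrac{\alpha ql'}{2})-q\tan^2(\tfrac{\alpha ql}{2})\bigr],
\]
which is negative when $l'$ is close to $l$, and $l'$ may be forced close to $l$. That $\psi(l')>\psi(l)$ is not the relevant comparison.

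The correct normalization is the paper's $\psi(t)=\frac{\alpha q\eta}{2}\tan(\frac{\alpha q\eta t}{2})$, which solves $\psi'=\psi^2+\frac{\alpha^2q^2\eta^2}{4}$. Hypothesis (iii) is calibrated so that $\frac{q}{q-1}\cdot\frac14\alpha^2q(q-1)\tan^2(\frac{\alpha ql}{2})=(\frac{\alpha q}{2}\tan\frac{\alpha ql}{2})^2$. With this choice $\calq_\bv\ge\frac{\alpha^2q^2(1-\eta^2)}{4\beta^2}$ on $W\setminus K$, and $\calq_\bv\ge\frac{\alpha^2q^2a_0}{4\beta^2}>0$ outside $W$, with $a_0$ as in \eqref{greaterthan}. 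Since the correctly normalized ODE is saturated, the slack $\eta<1$ is what absorbs your gradient factor $1+\delta$.

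Second, Step~1(b) does not produce an admissible $d$. The leafwise distance to $K$ is only lower semicontinuous: for the horizontal foliation of $\bbr^2$ and $K$ a closed disk, it jumps to $\infty$ just above the top of the disk. Averaging along leafwise flows smooths only in leaf directions, so your $d$ need not be transversally continuous. Then $\psi\sigma$ is not a smooth potential, $c_\bv(\di\psi)$ and your $O(\varepsilon\beta^{-1})$ commutator bound are meaningless, and the Fredholm and index arguments fail. The factor $\varepsilon$ only helps once transverse derivatives are bounded.

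The missing idea is the adiabatic metric $g_{\beta_0}=g^F\oplus\beta_0^{-2}g^{F^\perp}$. By compactness (\cite[Prop.~2.1]{chen}), the strict inequality $\dist_F(K,\partial W)>l$ yields $\dist_{\beta_0}(K,\partial W)\ge l+2\varpi$ for small $\beta_0$. The function $\frac{l+\varpi}{l+2\varpi}\min\{\dist_{\beta_0}(\cdot,K),l+2\varpi\}$ is genuinely Lipschitz, and smoothing it gives $\widetilde y$, which is $1$-Lipschitz for $g_{\beta_0}$. For $\beta\le\beta_0$ and small $\varepsilon$, the map $\pp\colon(\mm,g^{T\mm}_\bv)\to(M,g_\beta)$ is $\beta^{-1}$-Lipschitz. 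Hence $|\di(\widetilde y\circ\pp)|_\bv\le\beta^{-1}$, which controls leafwise and transverse directions at once.

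A smaller point: \cref{lem2} applied to $\nabla^\bv-\frac1q c_\bv(\cdot)A_\bv$ bounds $\int_K|u_1|^2$ by $\sum_i\|P^{\bv;q}_{e_i}u_1\|^2+\frac1q\|(D_{\bv;q}+A_\bv)u_1\|^2$ (see \eqref{penrose}), not by the Penrose term alone. The paper absorbs the second term by spending half of $\|\calb u_1\|^2$ via \cref{lem3}. This only halves $\calq_\bv$ and does not change its sign, so the loss of the sharp constant on $W\setminus K$ that you worry about does not occur.
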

\begin{proof}
We only prove the case where $TM$ is spin and of even rank in detail. Applying \cite[Thm.~4.11--4.12]{chen} and Remarks \ref{rem2}, \ref{rem3} to $(\widetilde{\calm}_r^\lozenge,\calf)$ where $\rk \calf =q$, we obtain other cases in a similar way; cf.\ Sections \ref{3.2}--\ref{3.3} and \cite[\S5.2~and~\S6.2]{chen}.

Now we have $M^\lozenge = M$. Take the orthogonal splitting $TM = F\oplus F^\perp$. As in \cref{2.1}, we have the Connes fibration $\bar{\pi}\colon \calm\to M$ and its subset $\ms$ defined in \eqref{ms}. For $r>0$, we have the smooth fibrations $\pp\colon \mm\to M$ that are double of $\ms$. As in \eqref{met4}, for $0<\beta,\varepsilon\leq 1$, we have a rescaled metric $g^{T\mm}_\bv$ defined by
\begin{align}\label{met7}
    T\mm = \hfz\oplus \hf\oplus\hff,
    \quad
    g_{\beta,\varepsilon}^{T\mm}=\beta^2 g^{\widetilde{\mathcal{F}}}\oplus \frac{g^{\widetilde{\mathcal{F}}^\perp_1}}{ \varepsilon^2}\oplus g^{\widetilde{\mathcal{F}}^\perp_2}.
\end{align}
Define $\kk = \pp^{-1}(K)$ and $\ww = \pp^{-1}(W)$. Then $\partial \ww = \pp^{-1}(\partial W)$.

\textbf{Step 1:} 
Take the metric $g^{F^\perp} = g|_{F^\perp}$ on $F^\perp$. Define
\begin{equation}\label{gbeta}
    g_\beta = g^F\oplus \beta^{-2}g^{F^\perp} \quad (0<\beta\le 1)
\end{equation} 
on $M$. Since $g_\beta\ge g$, the metric $g_\beta$ is also complete. Let $\dist_\bv$ and $\dist_\beta$ denote the distance function on $(\mm, g_{\beta,\varepsilon}^{T\mm})$ and $(M,g_\beta)$, respectively. 

Fix an arbitrary $\varpi$ such that
\begin{equation}\label{conrho}
    0<2\varpi<\min \bbbac{\frac{\pi}{\alpha q}\,,\, \dist_F(K,\partial W)} - l.
\end{equation}
By \cite[Prop.~2.1]{chen} and \eqref{conrho}, there exists $\beta_0>0$ such that
\begin{equation}\label{disbeta}
    \dist_{\beta_0} (K,\partial W) \ge l + 2 \varpi.
\end{equation}
 We define $y\colon M \to [0,l+\varpi]$ by 
\begin{equation}\label{ybeta0}
    y(x) = \frac{l+\varpi}{l+2\varpi}\cdot\min\!\bac{\!\dist_{\beta_0}(x,K)\,,\, l+2\varpi}.
\end{equation} 

Note that $\dist_{\beta_0}(K,\partial W) = \dist_{\beta_0}(K, M\setminus \inter(W))$. By \eqref{disbeta}, we can approximate $y$ by a smooth $1$-Lipschitz function $\widetilde{y}\colon (M,g_{\beta_0}) \to [0,l+\varpi]$ such that
\begin{equation}\label{ybetatilde}
    \widetilde{y}|_{U_K} \equiv 0
    \quad\text{and}\quad
    \widetilde{y}|_{M\setminus \inter W} \equiv l+\varpi,
\end{equation}
for some small open neighborhood $U_K$ of $K$; cf.\ \cite[Lem.~7.2]{cz24}. 

Following \cite[(7-3)]{cz24}, we take $\eta <1$ sufficiently close to $1$ such that
\begin{equation}\label{greaterthan}
a_0 \coloneqq \eta^2\tan^2\!\bbbket{\frac{\alpha q \eta (l+\varpi)}{2}}-\tan^2\!\bbket{\frac{{\alpha q l}}{2}} >0,
\end{equation} 
where $a_0$ is defined by the first identity. We define $\psi\colon [0,\frac{\pi}{\alpha q \eta})\to [0,\infty)$ by 
\begin{equation}\label{defpsi0}
    \psi(t) = \frac{\alpha q\eta }{2} \tan\!\bbket{\frac{\alpha q \eta  t}{2}}.
\end{equation}

Assume $\beta\in (0,\beta_0]$ and $\varepsilon>0$ are small. Define $\psi_\beta\colon \mm \to [0,\infty)$ by
\begin{equation}\label{defpsi}
    \psi_\beta=\frac{\psi\ket{\widetilde{y}\circ\pp}}{\beta}.
\end{equation}
Let $|\cdot|_\beta$ denote the fiberwise norm of $g_\beta$. Since $g_{\beta} \ge g_{\beta_0}$, we have $|\di\widetilde{y}|_\beta \le |\di\widetilde{y}|_{\beta_0}\le 1$. By \eqref{control} and \cite[(5.28)]{chen}, the map
\begin{equation}
    \pp\colon(\mm,g^{T\mm}_\bv) \to (M,g_\beta)
\end{equation} 
is $\frac{1}{\beta}$-Lipschitz when $\bv>0$ are sufficiently small. So
\begin{equation}
    |\di (\widetilde{y}\circ\pp)|_\bv\le |\di \widetilde{y}|_\beta\cdot
    |\di\pp|_\bv \le \beta^{-1}
\end{equation}
on $\mm$. Then we have
\begin{numcases}{}
    \psi_\beta|_{\mm \setminus \ww}={\frac{\alpha q\eta  }{2\beta}}\tan\!\bbbket{\frac{\alpha q\eta  (l+\varpi)}{2}} >0,&\label{conpsi1}\\
    \psi_\beta^2-|\di\psi_\beta|_\bv=\frac{\psi(\widetilde{y}\circ\pp)^2}{\beta^2}-\frac{{\psi}'(\widetilde{y}\circ\pp)\cdot|\di(\widetilde{y}\circ\pp)|_\bv}{\beta}  \geq -\frac{\alpha^2q^2\eta^2}{4\beta^2}.&\label{conpsi2}
\end{numcases}

\textbf{Step 2:} Define the pullback Hermitian vector bundle $\ee = \ee_+\oplus\ee_-$ over $\mm$ by $\ee_\pm = \pp^*E_\pm$. Let $\cals_\bv$ be the twisted Hermitian vector bundle over $\mm$ defined by 
\begin{equation}\label{sbv}
    \cals_{\bv} = S_\bv(T\mm)\,\widehat{\otimes}\,\xi\,\widehat{\otimes}\,\ee,
\end{equation}
where $\xi$ is defined in \eqref{xi}. 
Let $\nabla^\bv$, $c_\bv$ and $D_{\bv}$ denote the Hermitian connection, the Clifford action and the twisted Dirac operator on $\cals_\bv$, respectively. Note that $D_{\bv}$ is formally self-adjoint.  

For simplicity, let $|\cdot|$ denote the fiberwise norm associated with the Hermitian metric on $\cals_\bv$. Let $\|\cdot\|$ denote the $\mathrm{L}^2$-norm on $\cals_\bv$ associated with $g^{T\mm}_\bv$. In what follows, the integrations over $\mm$ are taken with respect to the Riemannian measure associated with $g^{T\mm}_\bv$.

Recall the parallel, unitary bundle isomorphism $\mathfrak{t}\colon E_+|_{M\setminus K}\to E_-|_{M\setminus K}$ given in \eqref{iso1}. We define
\begin{equation}\label{thetacale}
    \vartheta^\ee = \begin{pmatrix}0&\pp^*\mathfrak{t}^*\\ \pp^*\mathfrak{t}& 0\end{pmatrix}\colon \ee|_{\mm\setminus\kk} \to  \ee|_{\mm \setminus \kk},
\end{equation} 
where $\pp^*$ is the pullback, and $\mathfrak{t}^*$ is the adjoint of $\mathfrak{t}$.  Define the odd self-adjoint bundle endomorphisms $V\in\Gamma(\en{\cals_\bv})$ and $\vartheta \in \Gamma\bket{\mathrm{End}\bket{\cals_\bv|_{\mm\setminus \kk}}}$ as in \cite[(2.31)]{zhang17} and \cite[Ex.~2.5]{cz24}:
\begin{align}\label{end3}
    V \coloneqq \id 
    \,\widehat{\otimes}\, V^\xi \,\widehat{\otimes}\, \id,
    \quad 
    \vartheta\coloneqq\id
    \,\widehat{\otimes}\, \id
    \,\widehat{\otimes}\, \vartheta^\ee,
\end{align}
where $V^\xi$ is defined around \eqref{vxi}. Note that
\begin{equation}\label{antic}
    \vartheta^2 = \id
    \quad\text{and}\quad
    [V,\vartheta] = V\vartheta + \vartheta V = 0.
\end{equation}
Since $\mathfrak{t}$ is parallel, we know $\vartheta$ is also parallel with respect to $\nabla^\bv$.

Following \cite[Thm.~1.17]{gl83}, we fix a point $x_0\in M$, and let
\begin{equation}\label{defd}
d\colon (M,g)\to [0,\infty)
\end{equation} be a regularization of $x\mapsto\dist_g (x, x_0)$ that is $\frac{3}{2}$-Lipschitz. For $j>0$, define
\begin{equation}\label{defbj}
    B_j = \{x\in M:d(x)\leq j\}.
\end{equation}
Since $W$ is compact, there exists a sufficiently large $j_0>0$ such that
\begin{equation}
    K\subseteq W \subseteq B_{j}
\end{equation}
for all integer $j\geq j_0$. Take a smooth cutoff function $\chi_j$ on $M$ with compact support such that $\chi_j|_{B_{2j}} \equiv 1$. For any $0<\delta\le 1$, define $h_\delta \colon \mm\to [\delta,1]$ as in \eqref{hdelta}. Define
\begin{equation}
    h_{\delta,j} = (\chi_j\circ\pp) (h_\delta - 1) + 1.
\end{equation}
Then $h_{\delta,j} = h_\delta$ on $\pp^{-1}(B_{2j})$ and $h_{\delta,j} \equiv 1$ outside a compact subset.

Note that  $\psi_\beta \vartheta$ extends by zero to a smooth bundle endomorphism on $\ww$. Define
\begin{equation}\label{dd}
    \mathcal{B}_{h_{\delta,j},\psi_\beta,\beta^{-1}}=h_{\delta,j} D_{\bv} h_{\delta,j} + \psi_\beta\vartheta + \beta^{-1}V
\end{equation}
on $\mm$. The noncompact manifold $(\mm,g^{T\mm}_\bv)$ is complete since $(\mm,g^{T\mm})$ is; see \eqref{control}. By the construction of $h_{\delta,j},\psi_\beta,\vartheta$ and $V$, the operator $\mathcal{B}_{h_{\delta,j},\psi_\beta,\beta^{-1}}$ is self-adjoint as an unbounded operator on $\rml^2(\mm,\cals_\bv)$ with domain
\begin{equation}\label{domain}
    \rmh^1_{D_\bv}(\mm,\cals_\bv) = \bac{u\in \rml^2(\mm,\cals_\bv):D_\bv u\in \rml^2(\mm,\cals_\bv)}
\end{equation}
equipped with the graph norm
\begin{equation}\label{grnorm}
    \|u\|^2_{D_\bv} = \|u\|^2 + \|D_\bv u\|^2,
\end{equation}
where $D_\bv u \in \rml^2(\mm,\cals_\bv)$ is understood in the distributional sense. Moreover, the space $\Gamma_c(\cals_\bv)$ of compactly supported smooth sections is dense in \eqref{domain}. 

For any $u\in\Gamma_c(\cals_\bv)$ supported in $\mm\setminus\ww$, we have
\begin{align}\label{fredholm1}
    \begin{split}
        \|\mathcal{B}_{h_{\delta,j},\psi_\beta,\beta^{-1}}\,u\|^2 &= \|(h_{\delta,j} D_\bv h_{\delta,j} + \beta^{-1}V)u\|^2 + \|\psi_\beta u\|^2 \\
        &\geq \frac{\alpha^2 q^2\eta^2}{4\beta^2}\tan^2\!\bbbket{\frac{\alpha q\eta  (l+\varpi)}{2}} \|u\|^2
    \end{split}
\end{align}
by \eqref{conpsi1} and \eqref{antic}. So $\mathcal{B}_{h_{\delta,j},\psi_\beta,\beta^{-1}}$ is Fredholm. 

As in \cite[(2.34)]{zhang17}, it follows from the Atiyah--Singer index theorem \cite{as68} (cf. \cite[III.~Thm.~13.8]{lm89}) and the Gromov--Lawson Relative Index Theorem \cite{gl83} that
\begin{align}\label{ind123}
    \begin{split}
        \ind (\mathcal{B}_{h_{\delta,j},\psi_\beta,\beta^{-1},+})
        &= \binn{\widehat{A}(T\mm)\ch(\xi_+ - \xi_-)\ch(\ee_+ - \ee_-),[\mm]}\\
        &= \binn{\widehat{A}(TM)\ch(E_+ - E_-),[M]}.
    \end{split}
\end{align}

\textbf{Step 3:} Following \cite{gl83} and \cite[p.~115]{bl91}, let $\phi_0\colon [0,\infty)\to [0,1]$ be a smooth function such that $\phi_0 \equiv 0$ on $[0,1]$ and $\phi_0 \equiv 1$ on $[2,\infty)$. For $j\geq j_0$, let $\phi_{j,1},\phi_{j,2}:\mm \to [0,1]$ be the smooth functions defined by
    \begin{align}
        &\phi_{j,1} = \frac{1 - \phi_0\bket{\frac{d\circ \pp}{j}}}{\bbket{\phi_0\bket{\frac{d\circ \pp}{j}}^2 + \bket{1- \phi_0\bket{\frac{d\circ \pp}{j}}}^2}^{\frac{1}{2}}},
        \\
        &\phi_{j,2} = \frac{\phi_0\bket{\frac{d\circ \pp}{j}}}{\bbket{\phi_0\bket{\frac{d\circ \pp}{j}}^2 + \bket{1- \phi_0\bket{\frac{d\circ \pp}{j}}}^2}^{\frac{1}{2}}}.
    \end{align}
Then we have $\phi_{j,1}^2+\phi_{j,2}^2=1$, and for the compact subsets $B_j, B_{2j}$ defined by \eqref{defbj},
\begin{align}
    &\phi_{j,1}\equiv 1 \;\text{ on } \widetilde{\pi}^{-1}_r(B_{j}),
    \qquad 
    \phi_{j,1}\equiv 0 \;\text{ on } \mm\setminus \widetilde{\pi}^{-1}_r(B_{2j}),\\
    &\phi_{j,2}\equiv 0 \;\text{ on } \widetilde{\pi}^{-1}_r(B_{j}),
    \qquad 
    \phi_{j,2}\equiv 1 \;\text{ on } \mm\setminus \widetilde{\pi}^{-1}_r(B_{2j}).
\end{align}

As in \eqref{spe1}, for $u\in \Gamma_\cm(\cals_\bv)$, it follows from $\phi_{j,1}^2 + \phi_{j,2}^2 = 1$ that
\begin{align}\label{splitphi}
\begin{split}
    \|\mathcal{B}_{h_{\delta,j},\psi_\beta,\beta^{-1}}\,u\|^2
    &= \|\phi_{j,1} \mathcal{B}_{h_{\delta,j},\psi_\beta,\beta^{-1}}\,u\|^2+ \|\phi_{j,2} \mathcal{B}_{h_{\delta,j},\psi_\beta,\beta^{-1}}\,u\|^2\\
    &\geq 
    \frac{1}{2}\| \mathcal{B}_{h_{\delta,j},\psi_\beta,\beta^{-1}}(\phi_{j,1}\, u)\|^2+ \frac{1}{2}\| \mathcal{B}_{h_{\delta,j},\psi_\beta,\beta^{-1}}(\phi_{j,2}\, u)\|^2\\
    &\qquad\qquad -\|h_{\delta,j}^2\,c_\bv(\di\phi_{j,1})\,u\|^2 -\|h_{\delta,j}^2\,c_\bv(\di\phi_{j,2})\,u\|^2.
\end{split}
\end{align}
Define the local orthonormal frame $\{e_i\}$ as in \eqref{scale}. Since $g_\beta\ge g$ for $\beta\le 1$, the function $d\colon (M,g_\beta)\to [0,\infty)$ defined in \eqref{defd} is also $\frac{3}{2}$-Lipschitz for the metric $g_\beta$ on $M$. Together with $|\di\pp|_\bv \le \frac{1}{\beta}$, we have on $\mm$ that
\begin{equation}\label{esti51}
    c_\bv(\di\phi_{j,b}) = O\bbbket{\frac{1}{\beta j}}.
\end{equation}
By \eqref{conpsi1} and \eqref{antic}, we have
\begin{align}\label{esti52}
\begin{split}
    \|\mathcal{B}_{h_{\delta,j},\psi_\beta,\beta^{-1}}(\phi_{j,2}\,u)\|^2 &= \|\ket{h_{\delta,j} D_{\bv} h_{\delta,j} + \beta^{-1} V}\phi_{j,2}\,u\|^2+ \|\psi_\beta\,\phi_{j,2}\,u\|^2\\
    &\geq {\frac{\alpha^2q^2\eta^2}{4\beta^2}}\tan^2 \!\bbbket{\frac{\alpha q\eta  (l+\varpi)}{2}}\|\phi_{j,2}\,u\|^2.
\end{split}
\end{align}

We define $\varphi_1,\varphi_2:\mm\to [0,1]$ by \eqref{pou2} on $\calm_r$, and require that they are constant outside $\calm_{\frac{3r}{4}}$. Then $\varphi_1^2+\varphi_2^2=1$ and for $u\in \Gamma_\cm(\cals_\bv)$,
\begin{align}\label{splitvarphi}
\begin{split}
    \|\mathcal{B}_{h_{\delta,j},\psi_\beta,\beta^{-1}}(\phi_{j,1}\,u)\|^2
    &= \|\varphi_1 \mathcal{B}_{h_{\delta,j},\psi_\beta,\beta^{-1}}(\phi_{j,1}\,u)\|^2+ \|\varphi_2 \mathcal{B}_{h_{\delta,j},\psi_\beta,\beta^{-1}}(\phi_{j,1}\,u)\|^2\\
    &\geq 
    \frac{1}{2}\| \mathcal{B}_{h_{\delta,j},\psi_\beta,\beta^{-1}}(\varphi_1\,\phi_{j,1}\,u)\|^2\!-\|h_{\delta,j}^2\,c_\bv(\di\varphi_1)\,\phi_{j,1}\,u\|^2\\
    &\quad  + \frac{1}{2}\| \mathcal{B}_{h_{\delta,j},\psi_\beta,\beta^{-1}}(\varphi_2\,\phi_{j,1}\,u)\|^2\! -\|h_{\delta,j}^2\,c_\bv(\di\varphi_2)\,\phi_{j,1}\,u\|^2.
\end{split}
\end{align}
By \cite[Lem.~2.1]{zhang17}, for $b=1,2$, we have on the compact subset $\ms\cap\pp^{-1}(B_{2j})$ that
\begin{equation}\label{6.17}
    c_\bv(\di\varphi_b)= O_j\bbbket{\frac{1}{\beta r}}+O_{j,r}(1)
    \;\;\text{and}\;\;
    [D_\bv,V] = O_j\bbbket{\frac{1}{\beta r}}+O_{j,r}(1);
\end{equation}
cf.\ \eqref{spe2}. As in \eqref{spe4}, we have 
\begin{align}\label{6.19}
\begin{split}
    \|\mathcal{B}_{h_{\delta,j},\psi_\beta,\beta^{-1}}(\varphi_2\,\phi_{j,1}\,u)\|^2
    &= \int_\mm\babs{(h_{\delta,j} D_\bv h_{\delta,j} + \psi_\beta \vartheta)(\varphi_2\,\phi_{j,1}\, u)}^2\\
    &\quad + \int_{\mm}\!\bbbinn{\varphi_2 \,\phi_{j,1}\,u, \bbbket{\frac{V^2}{\beta^2} +\frac{h^2_{\delta,j}}{\beta}[D_\bv,V]}\varphi_2\,\phi_{j,1}\, u}\\
    &\geq \bbbac{\frac{1}{\beta^2}+O_j\bbbket{\frac{1}{\beta^2 r}}+O_{j,r}\bbbket{\frac{1}{\beta}}\!}\int_\ms|\varphi_2\,\phi_{j,1}\,u|^2\\
    &\quad+\bbbac{\frac{1}{\beta^2}+\delta^2 \,O_{\beta,\varepsilon,j,r}(1)}\int_{\mm\setminus\ms} |\varphi_2\,\phi_{j,1}\,u|^2. 
\end{split}
\end{align}

Let $\cale$ denote the pullback $\bbz_2$-graded Hermitian vector bundle $\bar{\pi}^*E$ over $\calm$. We define the Hermitian vector bundles $\cals_1$ and $\cals_2$ over $\calm$ by 
\begin{align}
    \cals_1 &=S_\bv(T\calm)\,\widehat{\otimes}\,S(\ff)^* \,\widehat{\otimes}\, \cale\label{s1},\\
    \cals_2 &=S_\bv(T\calm)\,\widehat{\otimes}\,(\caly \oplus \caly) \,\widehat{\otimes}\, \cale.\label{s2}
\end{align}
Then there is a unique splitting
\begin{equation}
    \varphi_1\,\phi_{j,1}\,u = u_1 + u_2,
\end{equation}
where $u_b\in \Gamma_c(\cals_b|_\ms)$ for $b=1,2$. As in \eqref{spe5}, we have
\begin{equation}\label{esti53}
\left\|\mathcal{B}_{h_{\delta,j},\psi_\beta,\beta^{-1}}\,u_2\right\|^2 = \int_{\ms}\babs{\ket{D_\bv+\psi_\beta \vartheta}u_2}^2+\beta^{-2}|u_2|^2 \geq \int_{\ms}\beta^{-2}|u_2|^2.
\end{equation}

\textbf{Step 4}: For the local orthonormal frame $\{e_i\}$ defined as in \eqref{scale}, recall that 
\begin{equation}\label{penroseop}
    P^{\bv;q}_{e_i} = \nabla^\bv_{e_i} + \frac{1}{q}c_\bv(e_i)D_{\bv;q}.
\end{equation} 
Recall the bundle endomorphisms $\calr^\bv$,$\calr^{\cale,\bv}_q$ and $\calr^\bv_q$ defined in \eqref{calr}--\eqref{calre} and \eqref{calrq}. In equalities, let the $O(\cdot)$ terms denote possibly complex-valued scalar quantities that may depend on the smooth sections like $u_1$, with bounding constants independent of the sections. By \cite[Thm.~4.11]{chen}, we have
\begin{align}\label{lich}
        \begin{split}
            \int_\ms |D_\bv \,u_1|^2
            &=
            \int_\ms\bbbac{\frac{q+O_{j,r}(\beta)}{q-1}\sum_{i=1}^{q}|P^{\bv;q}_{e_i}u_1|^2 +\sum_{i=q+1}^{n}|\nabla^\bv_{e_i}u_1|^2}\\
            &\quad+
            \int_\ms \bbbinn{u_1, \bbbket{\calr^\bv + \frac{1+ O_{j,r}(\beta)}{q-1}\calr^\bv_{q} + O_{j,r}\bbbket{\frac{1}{\beta}}} u_1},
        \end{split}
    \end{align}
    since ${\rm supp}(u_1)\subseteq \mathcal{M}_{\frac{3r}{4}}\cap \pp^{-1}(B_{2j})$. Set $\calr^\bv_{n} = \calr^\bv$. By \eqref{calr}, \eqref{calrq}, \cite[Lem.~1.3]{zhang17} and \cite[Thm.~4.12]{chen} (cf.\ \cite[(5.66)--(5.67)]{chen}), we have
    \begin{equation}\label{cur8}
        \calr^\bv_{b} = \frac{k^\calf + 4\beta^2 \calr^{\cale,\bv}_q}{4\beta^2} + O_{j,r}\bbbket{\frac{1}{\beta} + \frac{\varepsilon}{\beta^2}},\quad (b=q,n)
    \end{equation}
 on $\calm_{\frac{3r}{4}}\cap \pp^{-1}(B_{2j})$. Define the bundle endomorphism $\calq_\bv\in\Gamma(\en{\cals_1})$ on $\calm$ by
\begin{equation}\label{qbv}
    \calq_\bv = \frac{q}{q-1}\cdot\frac{k^\calf + 4\beta^2 \calr^{\cale,\bv}_q}{4\beta^2}+\psi^2_\beta +c_\bv(\di\psi_\beta)\vartheta.
\end{equation}
In inequalities, the $O(\cdot)$ terms denote real-valued scalar quantities with bounding constants independent of the smooth sections like $u_1$. Recall that $h_{\delta,j}\equiv 1$ on ${\rm supp}(u_1)$. Applying \eqref{6.17} and \eqref{lich}--\eqref{cur8}, and then taking real parts, we obtain
\begin{align}\label{u1}
    \begin{split}
        \|\mathcal{B}_{h_{\delta,j},\psi_\beta,\beta^{-1}}\,u_1\|^2
        &=\int_{\ms}|D_\bv \,u_1|^2 + \int_{\ms}\bbinn{u_1,\bbket{\psi^2_\beta +c_\bv(\di\psi_\beta)\vartheta}u_1}\\
        &\quad+\int_{\ms}\!\bbbinn{u_1,\bbbket{\frac{V^2}{\beta^2}+\frac{[D_\bv,V]}{\beta}}u_1}\\
        &\ge
        \frac{q+O_{j,r}(\beta)}{q-1}\int_{\ms}\sum_{i=1}^{q}|P^{\bv;q}_{e_i}u_1|^2\\
        &\quad+\int_{\ms}\!\bbbinn{u_1,\bbbket{\calq_\bv + O_j\bbbket{\frac{1}{\beta^2 r}}+ O_{j,r}\bbbket{\frac{1}{\beta}+ \frac{\varepsilon}{\beta^2}}}u_1}.
    \end{split}
\end{align}

Since $\vartheta$ is parallel with respect to $\nabla^\bv$, we have 
\begin{equation}\label{a1}
    [D_{\bv;q} - D_\bv,\psi_\beta \vartheta + \beta^{-1}V] = O_{j,r}\bbbket{\frac{1}{\beta} + \frac{\varepsilon}{\beta^2}}
\end{equation}
on $\calm_{\frac{3r}{4}}\cap \pp^{-1}(B_{2j})$; cf.\ \eqref{spe2}. By \eqref{cur8}, we have on $\calm_{\frac{3r}{4}}\cap \pp^{-1}(B_{2j})$ that
\begin{equation}\label{a2}
    \bket{1+O_{j,r}(\beta)}\calr^\bv_{q} - \calr^\bv = O_{j,r}\bbbket{\frac{1}{\beta} + \frac{\varepsilon}{\beta^2}}.
\end{equation}
Taking $A_\bv = \psi_\beta \vartheta + \beta^{-1}V$ in \eqref{penrose} and \eqref{qton}, we obtain from \eqref{a1}--\eqref{a2} that
\begin{align}\label{u3}
\begin{split}
    \int_{\ms}\sum_{i=1}^q \bbabs{&\bbket{\nabla^\bv_{e_i} - \frac{1}{q}c_\bv(e_i)\bket{\psi_\beta \vartheta + \beta^{-1}V}}u_1}^2\\
    &=
    \int_{\ms}\bbbac{\sum_{i=1}^q\babs{P^{\bv;q}_{e_i}u_1}^2 + \frac{1}{q}\babs{(D_{\bv;q} + \psi_\beta \vartheta + \beta^{-1}V) u_1}^2}\\
    &\le \int_{\ms}\bbbac{\bket{1+ O_{j,r}(\beta)}\sum_{i=1}^q\babs{P^{\bv;q}_{e_i}u_1}^2+ \frac{1}{q}|\calb_{h_{\delta,j},\psi_\beta,\beta^{-1}}\,u_1|^2 }\\
    &\qquad\qquad + \int_\ms O_{j,r}\bbbket{\frac{1}{\beta}+ \frac{\varepsilon}{\beta^2}}|u_1|^2.
\end{split}
\end{align}
By \eqref{u1} and \eqref{u3}, we have
\begin{align}\label{u4}
    \begin{split}
        \|\mathcal{B}_{h_{\delta,j},\psi_\beta,\beta^{-1}}\,u_1\|^2
        &\ge
        \frac{1}{2}\|\mathcal{B}_{h_{\delta,j},\psi_\beta,\beta^{-1}}\,u_1\|^2 + \frac{q+O_{j,r}(\beta)}{2(q-1)}\sum_{i=1}^{q}\int_{\ms}|P^{\bv;q}_{e_i}u_1|^2\\
        &\quad + \frac{1}{2}\int_{\ms}\bbbinn{u_1,\bbbket{\calq_\bv + O_j\bbbket{\frac{1}{\beta^2 r}}+ O_{j,r}\bbbket{\frac{1}{\beta}+ \frac{\varepsilon}{\beta^2}}}u_1}\\
        &\ge
        \frac{1}{2}\int_{\ms}\sum_{i=1}^q \bbabs{\bbket{\nabla^\bv_{e_i} - \frac{1}{q}c_\bv(e_i)\bket{\psi_\beta \vartheta + \beta^{-1}V}}u_1}^2\\
        &\quad + \frac{1}{2}\int_{\ms}\bbbinn{u_1,\bbbket{\calq_\bv + O_j\bbbket{\frac{1}{\beta^2 r}}+ O_{j,r}\bbbket{\frac{1}{\beta}+ \frac{\varepsilon}{\beta^2}}}u_1}
    \end{split}
\end{align}
when $\bv>0$ are sufficiently small.

By \eqref{5.4ii} and \eqref{ybetatilde}, there exists $a>0$ independent of $j,\bvr,\delta$ such that
\begin{equation}\label{qa0}
    \calq_\bv|_{\bar{\pi}^{-1}(L)} > 2a\beta^{-2}\quad\text{and}\quad \calq_\bv|_{\bar{\pi}^{-1}(K)}\ge 0.
\end{equation}
Recall the open subset $U_K$ chosen in \eqref{ybetatilde}. Then there exists an open subset $U\subseteq M$ independent of $j,\bvr,\delta$ such that $L\subseteq U\subseteq U_K$ and
\begin{equation}\label{qa1}
    \calq_\bv|_{\bar{\pi}^{-1}(U)} \ge a\beta^{-2}>0.
\end{equation}

We apply \cref{lem2} to $(\calm,g^{T\calm})$ and the restriction of the Hermitian connection $\nabla^\bv$ to $\cals_
1$. In \eqref{result2}, we take 
\begin{equation}\label{Adef}
    A_X = -q^{-1}c_\bv(X)\bket{\psi_\beta \vartheta + \beta^{-1}V}.
\end{equation} 
Then \eqref{acond2} holds for some $a_0>0$. 
Since $K\subseteq \sat(U)$ for the open subset $U$, there exist a compact subset $L_0\subseteq M$ and a constant $C_3>0$ both independent of $j,\bvr,\delta$ such that
\begin{align}\label{u2}
    \begin{split}
        C_3\int_{\bar{\pi}^{-1}(K)}\frac{|u_1|^2}{\beta^2}
        &\le \int_{\bar{\pi}^{-1}(L_0)}\sum_{i=1}^q \bbabs{\bbket{\nabla^\bv_{e_i} - \frac{1}{q}c_\bv(e_i)\bket{\psi_\beta \vartheta + \beta^{-1}V}}u_1}^2\\
        &\qquad\qquad+ \int_{\bar{\pi}^{-1}(U)} \frac{|u_1|^2}{\beta^2}
    \end{split}
\end{align}
holds for any $u_1\in \Gamma_c(\cals_1|_{\calm_{3r/4}})$. Note that $\psi^2_\beta +c_\bv(\di\psi_\beta)\vartheta \ge \psi^2_\beta - |\di\psi_\beta|_\bv$. By \eqref{5.4iii}, \eqref{ybetatilde}--\eqref{greaterthan} and \eqref{conpsi1}--\eqref{conpsi2}, we have
\begin{align}
    &\calq_\bv|_{\bar{\pi}^{-1}(W\setminus K)} \ge \frac{\alpha^2 q^2 (1-\eta^2)}{4\beta^2},\label{q4}\\
    &\calq_\bv|_{\bar{\pi}^{-1}(M\setminus W)} \ge \frac{\alpha^2q^2}{4\beta^2}\bbbket{\eta^2\tan^2\!\bbbket{\frac{\alpha q\eta  (l+\varpi)}{2}} - \tan^2\!\bbbket{\frac{\alpha q l}{2}}} = \frac{\alpha^2 q^2 a_0}{4\beta^2}.\label{q5}
\end{align}
By \eqref{qa0}--\eqref{qa1} and \eqref{q4}--\eqref{q5}, we have 
\begin{align}
    \frac{1}{4}\int_{\ms}\inn{u_1,\calq_\bv u_1}&\ge \int_{\bar{\pi}^{-1}(U)} \frac{a}{4\beta^2}|u_1|^2,\label{q7}\\
    \frac{1}{4}\int_{\ms}\inn{u_1,\calq_\bv u_1} &\ge \int_{\bar{\pi}^{-1}(M\setminus K)} \frac{\min\{\alpha^2q^2(1-\eta^2),\alpha^2q^2a_0\}}{16\beta^2}|u_1|^2.\label{q8}
\end{align}
Sum \eqref{q7}--\eqref{q8}, and then insert the result into \eqref{u4}. Applying \eqref{u2} to the resulting inequality, we conclude that
\begin{align}\label{u5}
    \begin{split}
        \|\mathcal{B}_{h_{\delta,j},\psi_\beta,\beta^{-1}}\,u_1\|^2
        &\ge
        \int_{\ms}\frac{\min\!\bac{4C_3,4aC_3,\alpha^2q^2(1-\eta^2),\alpha^2q^2a_0}}{16\beta^2}|u_1|^2 \\
        &\qquad+\int_{\ms}\bbbac{ O_j\bbbket{\frac{1}{\beta^2 r}}+ O_{j,r}\bbbket{\frac{1}{\beta} + \frac{\varepsilon}{\beta^2}}}|u_1|^2.
    \end{split}
\end{align}

By \eqref{splitphi}--\eqref{6.19}, \eqref{esti53}, \eqref{u5} and $h_{\delta,j}\le 1$, there exist sufficiently large $j\geq j_0$, sufficiently large $r>0$ and sufficiently small $\beta,\varepsilon,\delta,C_0>0$ such that
\begin{equation}\label{u8}
\|\mathcal{B}_{h_{\delta,j},\psi_\beta,\beta^{-1}}\,u\|\geq C_0\|u\|
\end{equation}
holds for all $u\in\Gamma_c(\cals_\bv)$; cf. \eqref{spe8}. Then 
\begin{equation}\label{u9}
{\rm ind}\left(\mathcal{B}_{h_{\delta,j},\psi_\beta,\beta^{-1},+}\right)=0.
\end{equation}
By \eqref{ind123} and \eqref{u9}, we conclude \eqref{result3}.
\end{proof}

\subsection{Proof of Theorem \ref{t1.4} and \ref{t1.5}}\label{4.3}
We first apply \cref{leaf21} to show that \cref{t1.4} holds under the slightly stronger condition $\dist_F(\supp(\di\Phi),\partial W) > l$:

\begin{thm}\label{leaf30}
Let $(M,g)$ be an $m$-dimensional, noncompact, connected, oriented, complete Riemannian manifold without boundary. Let $F\subseteq TM$ be a smooth integrable subbundle of rank $q \ge 2$. Define $g^F = g|_F$. Assume either $TM$ or $F$ is spin. Let $\Phi \colon M\to\bbs^m$ be a smooth map. Suppose $W\subseteq M$ is a codimension-$0$, compact, embedded submanifold with boundary and $L\subseteq W$ is a compact subset such that 
\begin{enumerate}[label=\textup{(\roman*)},ref=\textup{\roman*}]
    \item $L\subseteq \supp(\di\Phi) \subseteq W\cap \sat(L)$ and $\deg\Phi\ne 0$;\label{5.6i}
    \item $k^F|_L > 2\,|\wedge^2\!(\di\Phi|_F)\,|_{\tr}$ and $k^F|_{\supp(\di\Phi)\setminus L} \ge 2\,|\wedge^2\!(\di\Phi|_F)\,|_{\tr}$;\label{5.6ii}
    \item $k^F|_{W \setminus \supp(\di\Phi)} \geq \alpha^2 q(q-1)$ and $\dist_F(\supp(\di\Phi),\partial W) > l$\label{5.6iii}
\end{enumerate}
    for some $\alpha>0$ and $l\in (0,\frac{\pi}{\alpha q})$. Then $\inf_{M} k^F < -\alpha^2 q(q-1)\tan^2\ket{\frac{\alpha q l }{2}}$.
\end{thm}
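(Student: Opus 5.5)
We argue by contradiction and reduce to \cref{leaf21}. Suppose $\inf_M k^F \ge -\alpha^2 q(q-1)\tan^2\!\ket{\frac{\alpha q l}{2}}$. Pass to the auxiliary manifold $M^\lozenge$ of \eqref{tmspin}--\eqref{fspin} with projection $\pi_\lozenge\colon M^\lozenge\to M$. Extend $\Phi$ to $\Phi_\lozenge\colon M^\lozenge\to\bbs^{\dim M^\lozenge}$ exactly as in the proof of \cref{main11}, so that $\deg\Phi_\lozenge=\deg\Phi\neq 0$ and \eqref{tr32} holds. Here $\Phi_\lozenge$ is built from $h\colon\bbs^m\times\bbs^1\to\bbs^{m+1}$ factoring through the smash product. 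Since $\Phi$ is locally constant outside $\supp(\di\Phi)$, we should arrange that $\Phi_\lozenge$ is constant on each component of $M^\lozenge\setminus\pi_\lozenge^{-1}(\supp(\di\Phi))$, and then replace the pieces $L,\supp(\di\Phi),W$ by their preimages under $\pi_\lozenge$. Because $M$ is connected and noncompact, $\Phi$ equals a single point $p_0$ near infinity after a homotopy supported away from $W$; more simply, each component of $M\setminus\supp(\di\Phi)$ is mapped to a point.

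Next we build the Gromov--Lawson pair. Set $E_\pm=\Phi_\lozenge^*S_\pm(T\bbs^{\dim M^\lozenge})$ with the pulled-back connection, as in \eqref{defe31}, and take $K=\pi_\lozenge^{-1}(\supp(\di\Phi))$. On $M^\lozenge\setminus K$ the map $\Phi_\lozenge$ is locally constant. Hence $E_\pm$ are flat there, with trivial holonomy on each component, and are canonically identified with the constant vector spaces $S_\pm$ at the image point. Choosing on each component a unitary identification $\mathfrak t=c(v)$ for a unit vector $v$ at that image point, we obtain a parallel unitary isomorphism $E_+\to E_-$ off $K$. Thus $E$ is a Gromov--Lawson pair with support $K$.

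The hypotheses of \cref{leaf21} then come from Llarull's estimate \eqref{cur1}, namely $4|\calr^E_q|\le 2|\wedge^2(\di\Phi_\lozenge|_F)|_{\tr}\le 2\pi_\lozenge^*|\wedge^2(\di\Phi|_F)|_{\tr}$. This gives $k^F+4\calr^E_q\ge k^F-2|\wedge^2(\di\Phi|_F)|_{\tr}$, which is $>0$ on $L$ and $\ge 0$ on $K\setminus L$. Off $K$ we have $\calr^E_q=0$, so there $k^F+4\calr^E_q=k^F$. This is $\ge\alpha^2q(q-1)$ on $W\setminus K$, and $\ge -\alpha^2q(q-1)\tan^2$ on the complement of $W$ by the contradiction assumption. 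The saturation conditions and the strict bound $\dist_F(K,\partial W)>l$ lift directly, since the leaves of $F$ in $M^\lozenge$ are lifts of leaves in $M$ and $\pi_\lozenge$ is a leafwise isometry.

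\cref{leaf21} then gives $\inn{\widehat A(TM^\lozenge)\ch(E_+-E_-),[M^\lozenge]}=0$. The relative index pairing, however, equals $\deg\Phi_\lozenge\cdot\inn{\ch(S_+-S_-),[\bbs^{\dim M^\lozenge}]}=\pm2\deg\Phi\neq0$, since the class $\ch(E_+-E_-)$ is compactly supported and pulled back from the sphere, where only the top-degree part contributes. This is a contradiction.

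The main obstacle is the noncompact bookkeeping. First, $\Phi_\lozenge$ must be locally constant off $K$ so that $\mathfrak t$ exists; with the smash-product factorization of $h$ this is automatic. Second, the computation of the relative index must be justified: $\widehat A$ enters only in degree $0$ against the compactly supported top class, which can be verified via the Gromov--Lawson relative index theorem as in \eqref{ind123}.
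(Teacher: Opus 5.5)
You keep the paper's architecture: argue by contradiction, use Llarull's estimate \eqref{cur1}, apply \cref{leaf21}, and compute the pairing as $\pm2\deg\Phi$. The gap is that the Gromov--Lawson pair you build is not available in the stabilized cases $M^\lozenge=M\times\bbs^1$ and $M^\lozenge=M\times\bbs^1\times\bbs^1$.

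Your claim that $\Phi_\lozenge$ is locally constant off $\pi_\lozenge^{-1}(\supp(\di\Phi))$ ``automatically'' by the smash-product factorization is false. The map $h\colon\bbs^m\times\bbs^1\to\bbs^{m+1}$ collapses only $\{*\}\times\bbs^1\cup\bbs^m\times\{*'\}$. Take a component $C$ of $M\setminus\supp(\di\Phi)$ with $\Phi(C)=p\neq *$. On $C\times\bbs^1$ you get $\Phi_\lozenge(x,\theta)=h(p,\theta)$, which is in general a nonconstant loop. There $E_\pm$ are no longer trivial flat bundles, your $\mathfrak t=c(v)$ is not parallel, and nothing in your argument supplies a parallel unitary isomorphism. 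Note that $\calr^E_q$ still vanishes there, since $\di\Phi_\lozenge|_F=0$; the failure is in the pair structure, not the curvature estimate.

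This situation is not exotic. $\Phi$ can take several values off its support, on bounded components of $W\setminus\supp(\di\Phi)$ and on different components of $M\setminus W$, and only one of them can be the collapsed basepoint. Your aside about a homotopy ``supported away from $W$'' does not help. It creates $\di\Phi\neq0$ outside $W$, whereas \cref{leaf21} needs $K\subseteq W$ with $\dist_F(K,\partial W)>l$, and it does not touch the components inside $W$. When $M^\lozenge=M$, your componentwise choice $\mathfrak t=c(v_j)$ is legitimate and the pairing reduces to $\deg\Phi\cdot\langle\ch(S_+-S_-),[\bbs^m]\rangle$. So the gap is exactly the stabilized cases, which the statement covers.

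The missing step is to make the map equal to a single point outside a compact set that is still $F$-far from $\partial W$. You pay for this with the slack in the strict inequality $\dist_F(\supp(\di\Phi),\partial W)>l$.

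1. The paper uses \cite[Prop.~2.1]{chen} to find $\varpi,\beta_0>0$ with $\dist_{\beta_0}(\supp(\di\Phi),\partial W)\ge l+2\varpi$. It sets $K_t=\{x\in W:\dist_{\beta_0}(x,\partial W)\ge l+2\varpi-t\}$.
2. Every component of $M\setminus K_0$ meets the compact hypersurface $\partial W$, so $\Phi(M\setminus K_0)=\{p_1,\dots,p_a\}$ is finite.
3. $\Phi$ is replaced by $\Theta$. This map equals $\Phi$ on $K_{\varpi/2}$, runs along geodesics from $p_j$ to one basepoint $*$ across $K_\varpi\setminus K_{\varpi/2}$, and is $\equiv *$ off $K_\varpi$.
4. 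Because $\Theta$ factors through curves on the modified region, $|\wedge^2(\di\Theta|_F)|_{\tr}=|\wedge^2(\di\Phi|_F)|_{\tr}$ and $\deg\Theta=\deg\Phi$.
5. Choosing $h$ to collapse $\{*\}\times\bbs^1$, $\Theta_\lozenge$ is constant off $\pi_\lozenge^{-1}(K_\varpi)$. Hence $E=\Theta_\lozenge^*S(T\bbs^{\dim M^\lozenge})$ is a genuine Gromov--Lawson pair with support $\pi_\lozenge^{-1}(K_\varpi)$.
6. Since the support has grown, $L$ must be enlarged to $L'=L\cup(K_\varpi\setminus\inter(\supp(\di\Phi)))$. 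Positivity on $L'$ follows from $k^F\ge\alpha^2q(q-1)>0$ on $W\setminus\supp(\di\Phi)$, and by continuity on $\partial(\supp(\di\Phi))$, where $\di\Phi=0$.
7. $K_\varpi\subseteq\sat(L')$ follows from $\supp(\di\Phi)\subseteq\sat(L)$. The distance condition survives because $\dist_F(K_\varpi,\partial W)\ge\dist_{\beta_0}(K_\varpi,\partial W)>l$.

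With these replacements the rest of your argument goes through.
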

\begin{proof}
    Consider the orthogonal splitting $TM = F \oplus F^\perp$ and define $g^{F^\perp} = g|_{F^\perp}$. For $0<\beta\le 1$, define the complete metrics $g_\beta = g^F \oplus \beta^{-2}g^{F^\perp}$ on $M$. Let $\dist_\beta$ denote the distance map on $(M,g_\beta)$. Suppose to the contrary that 
    \begin{equation}\label{contr1}
        \inf_M k^F \ge -\alpha^2 q(q-1)\tan^2\!\bbket{\frac{\alpha q l }{2}}.
    \end{equation}
    
    \textbf{Step 1:} Since $\dist_F(\supp(\di\Phi),\partial W)>l$, \cite[Prop.~2.1]{chen} implies that there exist small numbers $\varpi>0$ and $\beta_0>0$ such that
\begin{equation}\label{disvarrho}
    \dist_{\beta_0}\!\bket{\supp(\di\Phi),\partial W}\geq l+2\varpi.
\end{equation}
For $t\in [0,\varpi]$, define a compact subset $K_t\subseteq M$ by
\begin{equation}\label{defkr1}
    K_t=\{x\in W:\dist_{\beta_0}(x,\partial W)\geq l+2\varpi-t\}.
\end{equation}
Then we have
\begin{align}
    &\supp(\di \Phi) \subseteq K_0 \subseteq \inter K_{\varpi/2}\subseteq \inter K_{\varpi}\subseteq W\label{K0prop}\\
    \text{and \,}&\dist_F(K_\varpi,\partial W) \ge \dist_{\beta_0}(K_\varpi,\partial W) >l.\label{prop6}
\end{align}

\textbf{Step 2:} Take a bump function $\zeta_0\colon M\to [0,1]$ such that 
    \begin{equation}\label{cond11}
        \supp \zeta_0\subseteq \inter K_{\varpi}
        \quad\text{and}\quad
        \zeta_0|_{K_{\varpi/2}}\equiv 1.
    \end{equation}
    Since $\partial W$ is smooth and compact, by \eqref{defkr1}--\eqref{K0prop}, there exist finitely many distinct points $p_1,\dots,p_a\in \bbs^m$ such that $\Phi(M\setminus K_0)=\{p_1,\dots,p_a\}$. We define
\begin{equation}\label{omedef}
    \Omega_j=\Phi^{-1}(p_j) \cap (M\setminus K_0).
\end{equation}
Next, fix a basepoint $*\in \bbs^m$. Choose geodesic curves $\gamma_j\colon [0,1]\to \bbs^m$ such that $\gamma_j(1) = p_j$ and $\gamma_j(0) = *$. Let $\Theta:M\to \bbs^m$ be the smooth map defined by
\begin{equation}\label{phi0}
\Theta(x)=
\begin{cases}
\Phi(x),& x\in K_{\varpi/2},\\
\gamma_j(\zeta_0(x)),& x\in \Omega_j,\\
*,& x\in M\setminus K_{\varpi}.
\end{cases}
\end{equation}

Then $\Theta$ is constant at infinity, and we have
\begin{equation}\label{sup0}
    \supp(\di\Theta)\subseteq \inter K_{\varpi},
    \quad
    \deg \Theta = \deg\Phi.
\end{equation}
Since $\Theta|_{\Omega_j}$ factors through smooth curves, we have 
\begin{equation}\label{tr1}
    |\wedge^2\!(\di\Theta|_F)\,|_{\tr} = |\wedge^2\!(\di\Phi|_F)\,|_{\tr} \quad \text{on } M.
\end{equation}

Recall the auxiliary manifold $M^\lozenge$ defined by \eqref{tmspin} or \eqref{fspin} with natural projection $\pi_\lozenge\colon M^\lozenge\to M$. Let $F$ also denote its pullback to $M^\lozenge$ equipped with the metric $g^F$. 
We can require the chosen 1-Lipschitz smooth map $h\colon\bbs^{m}\times \bbs^1\to\bbs^{m+1}$ of degree one collapses $\{*\}\times\bbs^1$ to one point. Then the map $\Theta\colon M\to \bbs^m$ can be extended to a smooth map $\Theta_\lozenge\colon M^\lozenge\to \bbs^{\dim M^\lozenge}$ such that
\begin{equation}\label{sup1}
    \supp(\di\Theta_\lozenge) \subseteq \pi^{-1}_\lozenge(K_\varpi)
    \quad\text{and}\quad 
    \deg\Theta_\lozenge = \deg\Theta.
\end{equation}
Similar to \eqref{tr32}, we have
\begin{equation}\label{tr2}
    |\wedge^2\!(\di\Theta_\lozenge|_F)\,|_{\tr} \leq \pi_\lozenge^*\bket{|\wedge^2\!(\di\Theta|_F)\,|_{\tr}}.
\end{equation}

Let $S(T\bbs^{\dim M^\lozenge}) = S_+(T\bbs^{\dim M^\lozenge})\oplus S_-(T\bbs^{\dim M^\lozenge})$ be the Hermitian bundle of spinors over the standard even-dimensional sphere $\bbs^{\dim M^\lozenge}$. Let $E_\pm\to M^\lozenge$ be the pullback Hermitian vector bundles defined by
\begin{equation}\label{defe}
E_\pm = \Theta_\lozenge^*\bket{S_\pm\ket{T\bbs^{\dim M^\lozenge}}}
\end{equation}
equipped with the pullback Hermitian connection. Then $E = E_+\oplus E_-$ is a Gromov--Lawson pair with support $\pi_\lozenge^{-1}(K_\varpi)$. 

\textbf{Step 3:} We define 
\begin{equation}
    L' = L \cup \bket{K_\varpi \setminus \inter(\supp(\di\Phi))}.
\end{equation}
It follows from $\supp(\di\Phi)\subseteq \sat(L)$ that
\begin{equation}\label{prop7}
    L'\subseteq K_\varpi \subseteq \sat(L').
\end{equation}
Moreover, by \eqref{5.6ii}--\eqref{5.6iii}, we have 
\begin{equation}\label{prop8}
    \bket{k^F + 4\calr^E_q}\big|_{\pi_\lozenge^{-1}(L')} > 0.
\end{equation}

By \cite[(4.6)]{ll98} and \eqref{tr1}, \eqref{tr2}, we have 
\begin{equation}\label{prop3}
    2\calr^E_q \geq - |\wedge^2\!(\di\Theta_\lozenge|_F)\,|_{\tr} \geq - \pi_\lozenge^*\bket{|\wedge^2\!(\di\Phi|_F)\,|_{\tr}}   \quad\text{on } M^\lozenge.
\end{equation}
Moreover, it follows from \eqref{sup0} and \eqref{sup1} that
\begin{equation}\label{prop4}
    \calr^E_q = 0
    \quad\text{on } M^\lozenge\setminus \pi^{-1}_\lozenge(K_\varpi).
\end{equation}

By \eqref{sup0}, \eqref{sup1} and \eqref{defe}, we have
\begin{align}\label{prop5}
    \begin{split}
        \binn{\widehat{A}(T&M^\lozenge)\ch(E_+-E_-),[M^\lozenge]}\\
        &= \binn{\ch\!\bket{S_+(T\bbs^{\dim M^\lozenge})-S_-(T\bbs^{\dim M^\lozenge})},[\bbs^{\dim M^\lozenge}]} \cdot \deg\Theta_\lozenge\\
        & = 2(-1)^{\dim M^\lozenge /2}\,\deg\Phi.
    \end{split}
\end{align}

By \eqref{5.6ii}--\eqref{contr1}, \eqref{prop6} and \eqref{prop7}--\eqref{prop4}, now we can apply \cref{leaf21} to the Gromov--Lawson pair $E$ and the compact subsets
\begin{equation}
    \pi^{-1}_\lozenge(L') \subseteq \pi^{-1}_\lozenge(K_\varpi) \subseteq \pi^{-1}_\lozenge(W)
\end{equation}
of $M^\lozenge$. We conclude from \eqref{result3} and \eqref{prop5} that
\begin{equation}
    \deg\Phi = 0,
\end{equation}
which is a contradiction to the condition that $\Phi$ is of nonzero degree.
\end{proof}

To prove \cref{t1.4}, we need the following conformal deformation lemma.
\begin{lemma}\label{cfm}
    Let $(M,F)$ be a noncompact foliated manifold without boundary, with $q = \rk F\ge 2$. Let $g$ be a complete Riemannian metric on $M$. Define $g^F = g|_F$. Suppose $W\subseteq M$ is compact and
    \begin{equation}
        k^F|_{M}\ge -a \quad \text{and} \quad k^F|_W> -a
    \end{equation}
    for some $a>0$. Then there exists a complete Riemannian metric $\widetilde{g}$ on $M$, which coincides with $g$ on an open neighborhood of $W$, such that the leafwise scalar curvature $\widetilde{k}^F$ associated with $\widetilde{g}^F = \widetilde{g}|_F$ satisfies
    \begin{equation}
        \inf_M\widetilde{k}^F > - a.
    \end{equation}
\end{lemma}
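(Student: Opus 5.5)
The plan is to take a conformal change $\widetilde{g}=e^{2f}g$ with $f=c\,\chi$, where $c>0$ is a small constant and $\chi\colon M\to[0,1]$ is a cutoff. Since $\widetilde{g}|_F=e^{2f}g^F$, restricting to any leaf $\call$ gives a conformal change of the $q$-dimensional metric $g^F|_\call$ by the function $f|_\call$. With $\nabla^F$ and $\Delta^F$ the leafwise gradient and Laplacian (trace of the leafwise Hessian), the standard formula reads
\begin{equation*}
\widetilde{k}^F = e^{-2f}\bket{k^F - 2(q-1)\Delta^F f - (q-1)(q-2)|\nabla^F f|^2}.
\end{equation*}
Only first and second leafwise derivatives of $f$ appear. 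These are controlled by the $C^2$-size of $\chi$ on any compact set on which $\chi$ is nonconstant.

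\textbf{Choice of the cutoff.} Since $W$ is compact, $k^F$ is continuous and $k^F|_W>-a$, there is $\epsilon>0$ and an open set $U\supseteq W$ with compact closure such that $k^F>-a+2\epsilon$ on $\overline{U}$. Choose a smooth $\chi\colon M\to[0,1]$ with $\chi\equiv0$ on an open neighbourhood $U_0$ of $W$ satisfying $\overline{U_0}\subseteq U$, and $\chi\equiv1$ on $M\setminus U$. Then all derivatives of $\chi$ are supported in the compact set $\overline{U}\setminus U_0$, so
\begin{equation*}
\sup_M\bket{|\Delta^F\chi|+|\nabla^F\chi|^2}\le C_\chi<\infty,
\end{equation*}
where leafwise derivatives are bounded by ambient ones with respect to $g$.

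\textbf{Estimates.} Set $f=c\chi$ and split $M$ into three regions.

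On $M\setminus U$, $f\equiv c$ is locally constant, so $\widetilde{k}^F=e^{-2c}k^F\ge -a e^{-2c}$, using $k^F\ge -a$ and $e^{-2c}>0$.

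On $U$, $f\ge 0$ and the formula gives
\begin{equation*}
\widetilde{k}^F\ge e^{-2f}k^F - 2(q-1)c\,C_\chi - (q-1)(q-2)c^2C_\chi .
\end{equation*}
Where $k^F\ge0$ the first term is nonnegative. Where $k^F<0$ we have $e^{-2f}k^F\ge k^F>-a+2\epsilon$. Choosing $c$ small enough that $2(q-1)cC_\chi+(q-1)(q-2)c^2C_\chi<\epsilon$ yields $\widetilde{k}^F\ge -a+\epsilon$ on $U$.

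Combining the regions, $\inf_M\widetilde{k}^F\ge\min\{-ae^{-2c},\,-a+\epsilon\}>-a$.

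\textbf{Remaining properties.} Because $\chi\equiv0$ on $U_0$, the metric $\widetilde{g}$ coincides with $g$ on the open neighbourhood $U_0$ of $W$. Completeness follows from $\widetilde{g}\ge g$, which holds since $f\ge0$: Cauchy sequences for $\widetilde{g}$ are Cauchy for $g$, and the two metrics are uniformly equivalent ($g\le\widetilde{g}\le e^{2c}g$).

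The only delicate point is the transition region of $\chi$. There one cannot use the gain $a(1-e^{-2f})$, since it degenerates where $\chi$ is small while $\Delta^F\chi$ need not. This is why all of the transition is placed inside $U$, where there is the strict margin $2\epsilon$ coming from $k^F|_W>-a$. Outside $U$, $f$ is constant and there are no derivative terms at all.
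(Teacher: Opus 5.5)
Your proof is correct and is essentially the paper's argument: a conformal factor $e^{2c\chi}$ (the paper's $e^{2tf}$) with $\chi$ vanishing near $W$ and equal to $1$ outside a relatively compact neighbourhood where $k^F>-a+2\epsilon$. There, small $c$ absorbs the derivative terms, and outside $\widetilde k^F=e^{-2c}k^F\ge -ae^{-2c}$. The differences are cosmetic: the paper rescales only the leafwise part ($g_t=e^{2tf}g^F\oplus g^{F^\perp}$) rather than all of $g$, and it argues on the compact set $\overline{U}_2$ via $k^F_t\to k^F$ uniformly as $t\to 0$. Your sign split on $U$ tacitly needs $2\epsilon\le a$ in the case $k^F\ge 0$, which is harmless since you may shrink $\epsilon$.
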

\begin{proof}
    Since $W$ is compact and $k^F|_W > -a$, there exist $\varsigma>0$ and open subsets $U_1, U_2$ such that $\overline{U_2}$ is compact and
    \begin{equation}\label{4.102}
        W\subseteq U_1 \subseteq \overline{U}_1 \subseteq U_2, 
        \quad
        k^F|_{\overline{U}_2} \ge -a + 2\varsigma.
    \end{equation} 
     Take a smooth function $f\colon M\to [0,1]$ such that 
    \begin{equation}
        f|_{U_1}\equiv 0
        \quad\text{and}\quad
        f|_{M\setminus U_2}\equiv 1.
    \end{equation}

    Consider the orthogonal splitting $TM=F\oplus F^\perp$. Define $g^{F^\perp} = g|_{F^\perp}$. For $t>0$, define a family of complete metrics $g_t$ on $M$ by
    \begin{equation}
        g_t = e^{2tf}g^F\oplus g^{F^\perp}.
    \end{equation}
    Then $g_t = g$ on $U_1$. Define $g_t^F = g_t|_F = e^{2tf}g^F$. Let $\nabla^g$ denote the Levi-Civita connection on the leaves associated with $g$, and let $p\colon TM\to F$ be the orthogonal projection. For local orthonormal frame $\{e_i\}_{i=1}^q$ of $(F,g^F)$, we define 
    \begin{equation}\label{delgra}
        \Delta_Ff = \sum_{i=1}^{q}e_i(e_i(f)) - (p\nabla^g_{e_i}e_i)f
        \quad\text{and}\quad
        \di_Ff = \sum_{i=1}^{q}e_i(f)\cdot e_i
    \end{equation}
    Then they are the leafwise Laplacian and the leafwise gradient on $M$ associated with $g^F$, respectively. The leafwise scalar curvature $k_t^F$ associated with $g_t^F$ satisfies
    \begin{equation}\label{scf}
       k_t^F = e^{-2tf} \bac{k^F - 2t(q-1)\Delta_F f - t^2(q-1)(q-2)|\di_Ff|_F^2 }.
    \end{equation}
    By \eqref{4.102}, we have
    \begin{equation}\label{sc1}
        k_{t}^F|_{\overline{U}_2}\ge -a +\varsigma
    \end{equation} 
    for all sufficiently small $t>0$. Moreover, it follows from \eqref{scf} that 
    \begin{equation}\label{sc2}
        k^F_t|_{M\setminus U_2} = e^{-2t}k^F \ge -a e^{-2t}.
    \end{equation}

    For sufficiently small $t>0$, it follows from \eqref{sc1}--\eqref{sc2} that
    \begin{equation}
        \inf_M k^F_t \ge -a + \min\{\varsigma,a(1-e^{-2t})\} > -a.
    \end{equation} 
    So we can take $\widetilde{g} = g_t$ for sufficiently small $t>0$.
\end{proof}

Now, we are ready to prove \cref{t1.4}.
\begin{proof}[Proof of \cref{t1.4}]
    Suppose to the contrary that
    \begin{equation}\label{contr2}
        k^F \ge -\alpha^2 q(q-1)\tan^2\!\bbket{\frac{\alpha q l }{2}}.
    \end{equation}
    By \eqref{contr2}, \cref{cfm}, and \cref{t1.4}\,\eqref{t1.4ii}--\eqref{t1.4iii}, there exists a metric $\widetilde{g}^F$, which equals $g^F$ on a neighborhood of $W$, such that 
    \begin{equation}\label{curass}
        \widetilde{k}^F \ge -\alpha^2 q(q-1)\tan^2\!\bbket{\frac{\alpha q l_0 }{2}}
    \end{equation}
    for some $0<l_0 <l$. Since $\partial W$ is a separating hypersurface, we have
    \begin{equation}\label{dis1}
        \dist_{\widetilde{g}^F}(\supp(\di\Phi),\partial W) = \dist_F(\supp(\di\Phi),\partial W) \ge l > l_0.
    \end{equation}
    Moreover, the conditions \cref{t1.4}\,\eqref{t1.4ii}--\eqref{t1.4iii} still hold for $\widetilde{g}^F$. 
    
    By \eqref{dis1} and \cref{leaf30}, it follows that 
    \begin{equation}
        \inf_M\widetilde{k}^F < -\alpha^2 q(q-1)\tan^2\!\bbket{\frac{\alpha q l_0 }{2}},
    \end{equation}
    which is a contradiction to \eqref{curass}.
\end{proof}

\begin{proof}[Proof of \cref{t1.5}]
    By \cref{t1.5}\,\eqref{t1.8i}--\eqref{t1.8ii}, we have $k^F|_{\partial(\supp(\di\Phi))} >0$. So there exists an open neighborhood $U$ of $\supp(\di\Phi)$ with smooth boundary such that 
    \begin{equation}
        k^F|_{\overline{U}\setminus \supp(\di\Phi)} \ge \alpha^2q(q-1)
    \end{equation}
    for some $\alpha>0$, and $\overline{U}$ is compact. Note that
    \begin{equation}
        \dist_F(\supp(\di\Phi),\partial \overline{U}) \ge \dist_g(\supp(\di\Phi),\partial \overline{U})>0.
    \end{equation} 
    Applying \cref{t1.4} to $W = \overline{U}$, we obtain $\inf_{M} k^F < 0$.
\end{proof}

\subsection{Proof of Theorem \ref{t1.2}}\label{4.4}
Now we turn to \cref{t1.2}, which is a direct consequence of \cref{t1.4}.

\begin{proof}[Proof of \cref{t1.2}]
    Suppose to the contrary that $\dist_F(\supp(\di\Phi),\partial M)>\frac{\pi}{\alpha q}$. Extend $g^F$ to a Riemannian metric $g$ on $M$. 
    
    \textbf{Step 1:} We claim there exists a collar neighborhood $U_1$ around $\partial M$ such that
    \begin{equation}\label{distance3}
        \dist_F(\supp(\di\Phi),\overline{U}_1)>\frac{\pi}{\alpha q}.
    \end{equation} 
    
    The reason is as follows. Fix a smooth collar neighborhood $\varphi\colon \partial M \times [0,1) \to M$ around $\partial M$. For $\epsilon\in (0,1)$, define $U_\epsilon = \varphi(\partial M\times[0,\epsilon))$. Fix $a>0$ such that
    \begin{equation}
        \frac{\pi}{\alpha q} < a < \dist_F(\supp(\di\Phi),\partial M).
    \end{equation} 
    Suppose to the contrary that $\dist_F(\supp(\di\Phi),\overline{U}_\epsilon)\le \frac{\pi}{\alpha q}$ for all $\epsilon\in (0,1)$. Then there exist smooth curves $\gamma_\epsilon\colon [0,1]\to M$ such that $|\dot{\gamma}_\epsilon|$ is constant and
    \begin{equation}\label{curve1}
        \gamma_\epsilon(0)\in \supp(\di\Phi),\quad \gamma_\epsilon(1)\in \overline{U}_{\epsilon},\quad\dot{\gamma}_\epsilon(t)\in F_{\gamma_\epsilon(t)},\quad |\dot{\gamma}_\epsilon| < a
    \end{equation}
    for $t\in [0,1]$. By isometrically embedding $M$ into $\bbr^N$, the Arzelà--Ascoli Theorem and the Banach--Alaoglu theorem allow us to extract a subsequence $\{\gamma_{\epsilon_j}\}_{j=1}^\infty$ converging uniformly to an absolutely continuous curve $\gamma\colon [0,1]\to M$, with their derivatives converging weakly in $\rml^2([0,1],\bbr^N)$ and $\lim_{j\to\infty}\epsilon_j = 0$. Then we have $\gamma(0)\in \supp(\di\Phi)$, $\gamma(1)\in \partial M$, and $\dot{\gamma}(t)\in F_{\gamma(t)}$ almost everywhere. 
    
    Moreover, the leafwise distance $\dist_F(\supp(\di\Phi),\partial M)$ defined using smooth curve is the same as that defined using absolutely continuous curves\footnote{The reason is following. Similar to the construction of $\gamma$, there exists a length-minimizing absolutely continuous curve $\zeta\colon [0,1]\to M$ such that $\zeta(0)\in \supp(\di\Phi), \zeta(1)\in\partial M$, and $\dot{\zeta}(t)\in F_{\zeta(t)}$ almost everywhere. It follows $\zeta([0,1))\subset\inter M$. In local foliation charts inside $\inter M$, the length-minimizing curve $\zeta$ is a smooth leafwise geodesic. So $\zeta|_{[0,1)}$ can be extended smoothly to the end point $1$. So absolutely continuous curves and smooth curves define the same $\dist_F(\supp(\di\Phi),\partial M)$.}. It follows that
    \begin{align}
        \begin{split}
            \dist_F(\supp(\di\Phi),\partial M) \le \int_{0}^{1}|\dot{\gamma}| &\le \bbac{\int_{0}^{1}|\dot{\gamma}|^2}^{1/2}\\ &\le \liminf_{j\to\infty}\bbac{\int_{0}^{1}|\dot{\gamma}_{\epsilon_j}|^2}^{1/2} \le a,
        \end{split}
    \end{align}
    which is a contradiction.

    \textbf{Step 2:} Take a smooth collar neighborhood $U_1\cong\partial M\times [0,1)$ satisfying \eqref{distance3}. Take a smooth function $y\colon M\to [0,1]$ such that $y\equiv 1$ on $M\setminus U_1$ and $y|_{\partial M\times [0,\frac{1}{2}]}$ is the projection to the second factor. Then $y|_{\partial M} \equiv 0$. Define
    \begin{equation}\label{defu}
        u=-\ln y.
    \end{equation}
    
    For the metric $g$ that extends $g^F$, define $\widetilde{g}$ on $\inter M$ by
    \begin{equation}\label{defwg}
        \widetilde{g} = e^{2u}g.
    \end{equation}
    We have $\widetilde{g} = \frac{g}{y^2}$ on $\partial M\times (0,\frac{1}{2})$, which implies that $\widetilde{g}$ is complete on $\inter M$. Moreover, we have $\widetilde{g} = g$ on $M\setminus U_1$. Since $\widetilde{g}|_F = e^{2u}g^F$, the leafwise scalar curvature $\widetilde{k}^F$ on $\partial M\times(0,\frac{1}{2})$ associated with $\widetilde{g}|_F$ is given by
    \begin{align}\label{kf3}
        \begin{split}
            \widetilde{k}^F &= e^{-2u} \bac{k^F - 2(q-1)\Delta_F u - (q-1)(q-2)|\di_Fu|_g^2 }\\
            &= y^2 k^F + 2(q-1)y\Delta_F y - q(q-1)|\di_Fy|_g^2.
        \end{split}
    \end{align}
    Since $M$ is compact, the smooth functions $\Delta_F y$ and $|\di_F y|_g^2$ are bounded on $M$; see \eqref{delgra}. So the leafwise scalar curvature $\widetilde{k}^F$ on $\inter M $ is bounded from below.

    Define $L' = L \cup \partial(\supp(\di\Phi))$. By \cref{t1.2}\,\eqref{1.2ii}--\eqref{1.2iii}, we have
    \begin{equation}\label{add0}
        k^F|_{L'} > 2\,|\wedge^2\!(\di\Phi|_F)\,|_{\tr}.
    \end{equation} 
    Let $\sat_{\inter M}(\cdot)$ denote the saturation of a subset on $(\inter M,F)$ defined as in \eqref{defsat}. For each $x\in \supp(\di\Phi)$, by \cref{t1.2}\,\eqref{1.2i}, there exists a smooth curve $\gamma\colon [0,1]\to M$ such that $\gamma(0)=x$, $\gamma(1)\in L$ and $\dot{\gamma}(t)\in F_{\gamma(t)}$ for $t\in [0,1]$. If $\gamma([0,1])$ is contained in $\supp(\di\Phi)$, then we have $x\in \sat_{\inter M}(L)$. Otherwise, take the smallest number $t_0\in [0,1]$ such that $\gamma(t_0)\in \partial(\supp(\di\Phi))$, and then $x\in \sat_{\inter M}(\partial(\supp(\di\Phi)))$. So
    \begin{equation}\label{add}
    \supp(\di\Phi)\subseteq \sat_{\inter M}(L').
    \end{equation} 
    Applying \cref{t1.4} to the foliated, complete manifold $(\inter M,\widetilde{g})$ and the compact subsets $L'$, $W \coloneqq M\setminus U_1$, it follows from \eqref{distance3} that 
    \begin{equation}\label{kf4}
        \inf_{\inter M} \widetilde{k}^F \leq -\alpha^2 q(q-1)\tan^2\!\bbket{\frac{\alpha q l}{2}}
    \end{equation}
    for any $l\in (0,\frac{\pi}{\alpha q})$. So $\inf_{\inter M} \widetilde{k}^F= -\infty$, which is a contradiction.
\end{proof}
\begin{rem}\label{r4.6}
    If $F$ is further assumed to be transverse to $\partial M$ in \cref{t1.2}, then we can attach a cylinder $U_2\coloneqq\partial M\times (-1,0]$ to $M$ by gluing $\partial M \times \{0\}$ and $\partial M$. Applying the above conformal deformation argument on $U_2$ and applying \cref{t1.4} to $W=M$, we can prove \eqref{1.5}.
\end{rem}

\section{Leafwise Scalar and Mean Curvature Comparison}\label{5}

Finally, we turn to \cref{t1.2-0}. Let $(W,F)$ be a foliated manifold with integrable subbundle $F$ transverse to $\partial W$. Recall the leafwise boundary mean curvature $H^F\in C^\infty(\partial W)$ defined in \eqref{h0}. Let $W^\lozenge$ be the even-dimensional auxiliary manifold defined as in \eqref{tmspin}--\eqref{fspin}. Recall the self-adjoint bundle endomorphism $\calr_q^E$ defined as in \eqref{end1}. The following theorem generalizes \cite[Thm.~4.10]{cz23} to foliations.

\begin{thm}\label{main3}
Let $W$ be a compact, connected, oriented manifold with nonempty boundary $\partial W$. Let $F\subseteq TW$ be an integrable subbundle of rank $q\ge 2$ that is transverse to $\partial W$. Assume either $TW$ or $F$ is spin. Let $g^F$ be a smooth metric on $F$. Let $L \subseteq W^\lozenge$ be a compact subset. For $\alpha >0$ and $l\in (0,\frac{\pi}{\alpha q})$, suppose $E=E_+\oplus E_-$ is a Gromov--Lawson pair over $W^\lozenge$ with support $K$ satisfying
\begin{enumerate}[label=\textup{(\roman*)},ref=\textup{\roman*}]
    \item $L \subseteq K\subseteq \sat_{W^\lozenge}(L)$ and $\dist_F\ket{K,\partial W^\lozenge} > l$;\label{6.1i}
    \item $\bket{k^F + 4\calr^{E}_q}|_{L} > 0$ and $\bket{k^F + 4\calr^{E}_q}|_{K\setminus L} \ge 0$;\label{6.1ii}
    \item $\bket{k^F + 4\calr^{E}_q}|_{W^\lozenge\setminus K} \geq \alpha^2 q(q-1)$ and $H^F|_{\partial W^\lozenge} \geq -\alpha\tan\ket{\frac{\alpha q l}{2}}$.\label{6.1iii}
\end{enumerate}
Then we have
\begin{equation}\label{result4}
    \binn{\widehat{A}\ket{TW^\lozenge}\ch (E_+ - E_-),[W^\lozenge]} = 0.
\end{equation}
\end{thm}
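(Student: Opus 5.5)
We argue by contradiction: assuming \eqref{result4} fails, we build on the Connes fibration over $W$ a deformed sub-Dirac operator with a local boundary condition whose index equals the left side of \eqref{result4}, and then show it is invertible. We give details only when $TW$ is spin and $\dim W$ is even, so $W^\lozenge=W$. The cases in \eqref{tmspin}--\eqref{fspin} are handled by the sub-Dirac modifications of \cref{3.3} together with \cref{rem2,rem3}.

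\textbf{Set-up.} Choose the splitting $TW=F\oplus F^\perp$ with $F^\perp|_{\partial W}\subseteq T\partial W$; transversality allows this. Build $\bar\pi\colon\calm\to W$, the doubled fibration $\mm$ with the rescaled metric \eqref{met4}, and the bundle $\cals_\bv$ from \eqref{sbv}. Then $\partial\mm=\pp^{-1}(\partial W)$ and the inward normal $\nu$ lies in $\hfz$.

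\textbf{Potential and operator.} As in Step 1 of the proof of \cref{leaf21}, use \cite[Prop.~2.1]{chen} to get $\beta_0$ and $\varpi$ with $\dist_{\beta_0}(K,\partial W)\ge l+2\varpi$. Take a smooth $1$-Lipschitz $\widetilde y$ with $\widetilde y\equiv0$ near $K$ and $\widetilde y\equiv l+\varpi$ near $\partial W$. Set
\[
\psi_\beta=\beta^{-1}\psi(\widetilde y\circ\pp),\qquad \psi(t)=\tfrac{\alpha q\eta}{2}\tan\!\bigl(\tfrac{\alpha q\eta t}{2}\bigr),
\]
with $\eta<1$ close to $1$ chosen so that $\eta\tan\bigl(\tfrac{\alpha q\eta(l+\varpi)}{2}\bigr)>\tan\bigl(\tfrac{\alpha q l}{2}\bigr)$. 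Consider
\[
\calb=h_\delta D_\bv h_\delta+\psi_\beta\vartheta+\beta^{-1}V
\]
with the boundary condition $c_\bv(\nu)\vartheta u=\pm u$ on $\partial\mm$, following \cite[\S5]{cz24} and \cite{bb12,bb16}. The sign is chosen so that the boundary term $\int_{\partial}\inn{u,c_\bv(\nu)A_\bv u}$ from \eqref{deform1} has the favourable sign $\frac{q}{2}\psi_\beta|u|^2$ after the $\frac{q}{q-1}$ rescaling. This condition anticommutes with $V$, by \eqref{antic}, because $V^\xi$ lives in the vertical spinors and $\nu$ is horizontal, so the problem is elliptic and self-adjoint. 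As in \cite[Thm.~4.10]{cz23}, a deformation to $\psi\equiv$ large constant, using the Gromov--Lawson pair structure, identifies the index with $\binn{\widehat A(TW)\ch(E_+-E_-),[W]}$.

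\textbf{Estimate.} Apply the localization \eqref{splitvarphi}--\eqref{esti53} unchanged; no $\phi_{j,b}$ cutoff is needed since $W$ is compact. For $u_1$, use \cref{lem3} with $A_\bv=\psi_\beta\vartheta+\beta^{-1}V$ together with the boundary version of the Lichnerowicz formula with coefficient $\frac{q}{q-1}$ (\cite[(4.40),(4.42)]{chen}, i.e.\ \eqref{deform5} and \eqref{deform9}). The result is an interior term
\[
\int\inn{u_1,\calq_\bv u_1}+\tfrac{q}{q-1}\sum_{i\le q}\bigl|P^{\bv;q}_{e_i}u_1\bigr|^2
\]
plus a boundary term. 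By \eqref{deform7}, the boundary term is
\[
\int_{\partial\calm}\Bigl\langle u_1,\Bigl(\tfrac{q}{2\beta}H^\calf+\tfrac{q}{2}\psi_\beta-\tfrac{q}{q-1}D^\partial_{\bv;q}+O(1)\Bigr)u_1\Bigr\rangle .
\]
The boundary condition makes $D^\partial_{\bv;q}$ anticommute with $c_\bv(\nu)\vartheta$, so its contribution vanishes. On $\partial W$ we have $\psi_\beta>\beta^{-1}\alpha\tan\bigl(\tfrac{\alpha q l}{2}\bigr)$, which together with condition \eqref{6.1iii} gives strict positivity of order $\beta^{-1}$ that absorbs the $O(1)$ term. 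In the interior, $\calq_\bv$ is bounded below as in \eqref{qa0}--\eqref{q5}, using the Riccati identity $\psi'=\psi^2\cdot\frac{1}{\eta}\cdots$, i.e.\ $\psi^2-\psi'\ge-\frac{\alpha^2q^2\eta^2}{4}$. The Poincaré-type inequality \cref{lem2}, applied with $A_X=-q^{-1}c_\bv(X)(\psi_\beta\vartheta+\beta^{-1}V)$ and the saturation hypothesis $K\subseteq\sat(L)$, upgrades the mere nonnegativity of $\calq_\bv$ on $K\setminus L$ to a uniform $\beta^{-2}$ lower bound. This yields $\|\calb u\|\ge C_0\|u\|$, so the index vanishes, a contradiction.

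\textbf{Main obstacle.} The hard step is the boundary analysis. One must check that the local boundary condition is compatible with the Connes-fibration deformation $V$, the doubling, and the partial (leafwise) Bochner formula. One must also check that \cref{lem2}'s flows, which use leafwise curves that may reach $\partial\calm$, stay inside $W$. The latter can be arranged because $L_0$ can be taken away from $\partial W$, since $\dist_F(K,\partial W)>l$. I would first verify that $c_\bv(\nu)\vartheta$ anticommutes with $D^\partial_{\bv;q}$ and $D^\partial_\bv$, mirroring \cite[Lem.~5.3]{cz24}.
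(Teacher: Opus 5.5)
Your outline follows the paper's architecture. The ingredients are: the Connes fibration; $\calb=h_\delta D_\bv h_\delta+\psi_\beta\vartheta+\beta^{-1}V$ with the local condition $c_\bv(\nu)\vartheta u=u$; the $\frac{q}{q-1}$-Lichnerowicz formula with boundary and the leafwise Penrose operator; then \cref{lem2}. However, the one point you flag as delicate is resolved by a non-sequitur. \cref{lem2} is proved on a manifold without boundary, and $u_1$ lives only over $\pp^{-1}(W)$. You therefore need $K\subseteq\sat_{\inter W}(U)$ for an open $U$ on which $\calq_\bv\ge a\beta^{-2}$, so that the connecting curves, hence the flows and $L_0$ of \cref{lem2}, lie in $\inter W$. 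Hypothesis (i) gives only $K\subseteq\sat_{W}(L)$, i.e.\ leafwise curves in $W$ from $K$ to $L$, and these may run out to $\partial W$ and back. The bound $\dist_F(K,\partial W)>l$ says only that such excursions are long, not that they can be avoided, so it does not let you take $L_0$ away from $\partial W$.

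The paper's fix is to replace $L$ by $L'=L\cup\partial K$. By (iii) and continuity, $k^F+4\calr^E_q\ge\alpha^2q(q-1)>0$ on $\partial K$, so $L'$ still carries strict positivity, together with an open $U\supseteq L'$ on which $\calq_\bv\ge a\beta^{-2}$. Cutting a leafwise curve from $x\in K$ to $L$ at its first hitting time of $\partial K$ gives a curve inside $K\subseteq\inter W$. Hence $K\subseteq\sat_{\inter W}(L')$, and \cref{lem2} applies with $L_0\subseteq\inter W$. (Alternatively, transversality of $F$ to $\partial W$ lets you push leafwise curves off $\partial W$ inside half-disc plaques.)

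There are also some smaller slips.
\begin{itemize}
\item The boundary term is $\frac{qH^\calf}{2\beta}+\psi_\beta$, not $\frac{q}{2\beta}H^\calf+\frac q2\psi_\beta$. The potential contributes $\int_{\partial}\inn{u_1,c_\bv(\nu)\psi_\beta\vartheta u_1}=\int_\partial\psi_\beta|u_1|^2$ to $\|\calb u_1\|^2$, with no rescaling. So the inequality to verify on $\partial W$ is $\psi_\beta>\frac{\alpha q}{2\beta}\tan(\tfrac{\alpha q l}{2})$, not $\psi_\beta>\frac{\alpha}{\beta}\tan(\tfrac{\alpha q l}{2})$. Your choice of $\eta$ does give the correct inequality (this is \eqref{h1}), so nothing breaks, but the coefficient as written is wrong.
\item $\chi=c_\bv(\nu)\vartheta$ \emph{commutes} with $V$, since each factor anticommutes with $V$. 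What you actually use is that $c_\bv(\nu)V$ anticommutes with $\chi$, which kills $\int_\partial\inn{u,c_\bv(\nu)Vu}$.
\item The index step is only gestured at, and a Connes fibration over $W$ alone provides no closed manifold for Atiyah--Singer. The paper doubles $W$ to $M$, extends $E$ via $\mathfrak t$, and builds the fibration over $M$. It then homotopes $\calb$ to $D_\bv+\psi_\beta\vartheta$ on the compact $\pp^{-1}(W)$ and applies the Cecchini--Zeidler splitting theorem to reduce to the index of $D_\bv$ on the closed $\mm$.
\end{itemize}
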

\begin{proof}
    We only prove the case where $TW$ is spin and of even rank in detail. Other cases are similar as we mentioned before; cf.\ \cite[\S5.2 and \S6.2]{chen}. Now $W^\lozenge = W$.

\textbf{Step 1:} Let $W'$ be another copy of $W$ with the opposite orientation. Define
\begin{equation}
    M = W\cup_{\partial W} W'
\end{equation}
to be the smooth double of $W$. Since $F$ is transverse to $\partial W$, it extends to a smooth integrable subbundle on $M$ denoted by the same symbol. 

Take a subbundle $F^\perp\subseteq TM$ such that 
\begin{equation}\label{splitting1}
    F\oplus F^\perp = TM
    \quad\text{and}\quad
    F^\perp|_{\partial W}\subseteq T\partial W.
\end{equation}
As in \cref{2.1}, we have the Connes fibration $\bar{\pi}\colon \calm\to M$ and the fibrations $\pp\colon \mm\to M$ $(r>0)$. The rescaled metric $g^{T\mm}_\bv$ $(0<\beta,\varepsilon\leq 1)$ is defined as in \eqref{met4}. Define $\kk = \pp^{-1}(K)$ and $\ww = \pp^{-1}(W)$. Then $\partial \ww = \pp^{-1}(\partial W)$. Recall the subset $\ms$ defined in \eqref{ms}, we define $\ws = \ww\cap\ms$.

Repeating Step 1 of the proof of \cref{leaf21}, we obtain smooth functions $\psi_\beta\colon \mm\to [0,\infty)$ satisfying \eqref{conpsi1}--\eqref{conpsi2} for an arbitrarily fixed $\varpi$ satisfying \eqref{conrho}, the chosen $\eta$ in \eqref{greaterthan} and sufficiently small $\beta>0$.

By the isomorphism \eqref{iso1}, the Gromov--Lawson pair $E = E_+\oplus E_-$ can be extended to a $\bbz_2$-graded complex vector bundle over $M$ denoted by the same symbol such that $E_\pm|_{W'} = E_-$. Following Step 2 of the proof of \cref{leaf21}, we define $h_\delta$, $\cals_\bv$ and $V$, $\vartheta$ by \eqref{hdelta}, \eqref{sbv} and \eqref{end3}. Let $D_\bv$ and $c_\bv$ denote the Dirac operator and Clifford action on $\cals_\bv$. We define
\begin{equation}\label{defdd1}
    \calb_{h_\delta,\psi_\beta,\beta^{-1}} = h_\delta D_{\bv} h_\delta + \psi_\beta\vartheta + \beta^{-1}V.
\end{equation}

By \eqref{splitting1}, we have $\hf|_{\partial \ww}\subseteq T\partial\ww$ for the bundle $\hf$ in \eqref{met2}. Let $\nu$ denote the inward unit normal on $\partial \ww$ associated with the metric $g^{T\mm}_\bv|_\ww$. Then $\nu\in\Gamma(\hfz)$. Following \cite[Def.~2.3]{cz24}, we define
\begin{equation}\label{bdcon}
    \chi=c_\bv(\nu)\,\vartheta\in \Gamma(\en{\cals_\bv|_{\partial\ww}}).
\end{equation}
As in \cite{cz24}, $\chi$ is a self-adjoint, even involution (that is $\chi^2=\id$), which anticommutes with $c_\bv(\nu)$ and commutes with $c_\bv(X)$ for any $X\in\Gamma(T\partial\ww)$. Define
\begin{equation}
    \Gamma_{\vartheta}(\cals_\bv|_{\ww})=\bac{u\in \Gamma(\cals_\bv|_{\ww}): \chi(u|_{\partial \ww})=u|_{\partial \ww}}.
\end{equation}
Since $\partial \ww$ is compact, the trace theorem implies that the restriction map $\Gamma(\cals_\bv)\to \Gamma(\cals_\bv|_{\partial \ww}), u\mapsto u|_{\partial \ww}$ extends to a continuous linear operator
\begin{equation}
    \scrr\colon \rmh^1(\ww,\cals_\bv) \to \rmh^{1/2}(\partial \ww ,\cals_\bv),
\end{equation}
where $\rmh^1(\ww,\cals_\bv)$ is the Sobolev space of sections of $\cals_\bv$, and $\rmh^{1/2}(\partial \ww,\cals_\bv)$ is the fractional Sobolev space of sections on the boundary.
As in \cite{cz24} (cf. \cite[\S3.2]{chen}), we know $\chi\scrr u = \scrr u$ is a local elliptic boundary condition for the operator $B_{h_\delta,\psi_\beta,\beta^{-1}}$ defined by \eqref{defdd1} in the sense of Bär and Ballmann \cite{bb12,bb16}.

Following \cite[Thm.~3.4]{cz24}, the operator $\mathcal{B}_{h_\delta,\psi_\beta,\beta^{-1}}$ on the compact manifold $\ww$ is self-adjoint and Fredholm as an unbounded operator on $\rml^2(\ww, \cals_\bv)$ with domain
\begin{equation}\label{bdcon2}
    \rmh_{\vartheta}^1(\ww, \cals_\bv)=\bac{u\in \rmh^1(\ww, \cals_\bv) :\chi \scrr u=\scrr u}.
\end{equation}
By ellipticity, the $\rmh^1$-norm and the graph norm \eqref{grnorm} on $\rmh_{\vartheta}^1(\ww, \cals_\bv)$ are equivalent and $\Gamma_{\vartheta}(\cals_\bv|_{\ww})$ is dense in $\rmh_{\vartheta}^1(\ww, \cals_\bv)$.

Since $\ww$ is compact, the operator $\mathcal{B}_{h_\delta,\psi_\beta,\beta^{-1}}$ is homotopic to $\mathcal{B}_{1,\psi_\beta,0} = D_\bv + \psi_\beta \vartheta$ through a continuous family of self-adjoint Fredholm operators on $\rmh^1_{\vartheta}(\ww,\cals_\bv)$. Recall that $D_{\bv}$ is the twisted Dirac operator on $\cals_\bv\to \mm$. It follows that
\begin{equation}\label{ind10}
    \ind\!\bket{\mathcal{B}_{h_\delta,\psi_\beta,\beta^{-1},+}} = \ind\!\bket{\mathcal{B}_{1,\psi_\beta,0,+}} = \ind\!\bket{D_{\bv,+}|_\mm},
\end{equation}
where the last equality follows from \cite[Lem.~3.8]{cz24} and the splitting theorem \cite[Thm.~3.6]{cz24}; cf.\ \cite[Cor.~3.9]{cz24}. As in \cite[(2.34)]{zhang17}, it follows from the Atiyah--Singer index theorem \cite{as68} (cf. \cite[III.~Thm.~13.8]{lm89}) that
\begin{align}\label{ind12}
    \begin{split}
        \ind\!\bket{D_{\bv,+}|_\mm} &= \binn{\widehat{A}(T\mm)\ch(\xi_+ - \xi_-)\ch(\ee_+ - \ee_-),[\mm]}\\
        &= \binn{\widehat{A}(TM)\ch(E_+ - E_-),[M]}\\
        &=\binn{\widehat{A}(TW)\ch (E_+ - E_-),[W]}.
    \end{split}
\end{align}

\textbf{Step 2:} We define $\varphi_1,\varphi_2:\mm\to [0,1]$ by \eqref{pou2} on $\calm_r$, and require that they are constant outside $\calm_{\frac{3r}{4}}$. Similar to \eqref{splitvarphi}, we have
\begin{align}\label{splitvarphi1}
\begin{split}
    \|\mathcal{B}_{h_\delta,\psi_\beta,\beta^{-1}}\,u\|^2
    &= \|\varphi_{1} \mathcal{B}_{h_\delta,\psi_\beta,\beta^{-1}}\,u\|^2+ \|\varphi_{2} \mathcal{B}_{h_\delta,\psi_\beta,\beta^{-1}}\,u\|^2\\
    &\geq 
    \frac{1}{2}\| \mathcal{B}_{h_\delta,\psi_\beta,\beta^{-1}}(\varphi_{1} u)\|^2+ \frac{1}{2}\| \mathcal{B}_{h_\delta,\psi_\beta,\beta^{-1}}(\varphi_{2}\,u)\|^2\\
    &\qquad\qquad -\|h^2_\delta\,c_\bv({\rm d}\varphi_{1})u\|^2 -\|h^2_\delta\,c_\bv({\rm d}\varphi_{2})u\|^2.
\end{split}
\end{align}
By \cite[Lem.~2.1]{zhang17}, there exists a constant $C_1>0$ independent of $\bv,r>0$ so that
\begin{align}\label{esti11}
|c_\bv({\rm d}\varphi_b)|\leq \frac{C_1}{\beta r}+O_{r}(1) \quad\text{on }\ms
\quad\text{and}\quad
\abs{\left[D_\bv,V\right]}\leq {\frac{C_1}{\beta r}}+O_{r}(1)
\end{align}
on $\ms$ for $b=1,2$. From now on, assume $u\in \Gamma_\vartheta(\cals_\bv|_\ww)$. By \cite[(3.25)]{chen}, we have 
\begin{align}\label{3.21}
    \begin{split}
        \|\mathcal{B}_{h_\delta,\psi_\beta,\beta^{-1}}(\varphi_2 u)\|^2
        &= \int_\ww\babs{(h_\delta D_\bv h_\delta + \psi_\beta \vartheta)(\varphi_2 u)}^2\\
        &\qquad\qquad + \int_{\ww}\bbbinn{\varphi_2 u, \bbbket{\frac{V^2}{\beta^2} +\frac{h^2_\delta}{\beta}[D_\bv,V]}\varphi_2 u}\\
        &\geq \bbbac{\frac{1}{\beta^2}+O\bbbket{\frac{1}{\beta^2 r}}+O_r\bbbket{\frac{1}{\beta}}}\int_{\ws}|\varphi_2 u|^2\\
        &\qquad\qquad+\bbbac{\frac{1}{\beta^2}+\delta^2 \, O_{\beta,\varepsilon,r}(1)}\int_{\ww \setminus \ws}|\varphi_2 u|^2. 
    \end{split}
\end{align}

Define $\cale = \bar{\pi}^*E$, and define the Hermitian vector bundles $\cals_1$ and $\cals_2$ over $\ms$ by \eqref{s1}--\eqref{s2}.
Then $B_{h_\delta,\psi_\beta,\beta^{-1}}$ preserves the orthogonal splitting $\cals_\bv|_{\ms} = \cals_1|_{\ms} \oplus \cals_2|_{\ms}$. For $u\in \Gamma_\vartheta(\cals_\bv|_\ww)$, we have the orthogonal splitting
\begin{equation}
    \varphi_1 u = u_1 + u_2,
\end{equation}
where $u_b\in \Gamma_\vartheta(\cals_b|_\ww)$ for $b=1,2$. Then \eqref{esti53} still holds with $h_{\delta,j}$ replaced by $h_\delta$.

The formula \eqref{cur8} still holds with $O_{j,r}$ replaced by $O_r$. Recall the operator \eqref{penroseop}. Since ${\rm supp}(u_1)\subseteq \mathcal{W}_{\frac{3r}{4}}$, by \cite[(3.22),~(4.5)~and~Thm.~4.11]{chen}, we have
\begin{align}\label{3.16}
        \begin{split}
            \int_\ws |D_\bv \,u_1|^2
            &=
            \int_\ws\bbbac{\frac{q+O_r(\beta)}{q-1}\sum_{i=1}^{q}|P^{\bv;q}_{e_i}u_1|^2 +\sum_{i=q+1}^{n}|\nabla^\bv_{e_i}u_1|^2}\\
            &\quad+
            \int_{\ws} \bbbinn{u_1,\bbbket{\frac{q}{q-1}\cdot\frac{k^\calf + 4\beta^2 \calr^{\cale,\bv}_q}{4\beta^2} + O_{r}\bbbket{\frac{1}{\beta} + \frac{\varepsilon}{\beta^2}}}u_1}\\
            &\qquad+ \int_{\ws\,\cap\,\partial\ww} \bbbac{\frac{q H^{\calf}}{2\beta} + O_r(1)}|u_1|^2.
        \end{split}
    \end{align}
    Moreover, \eqref{a1}--\eqref{a2} still hold with $O_{j,r}$ replaced by $O_{r}$. By \eqref{penrose}, \eqref{qton} and \cite[(3.22)~and~(4.5)]{chen}, we have
\begin{align}\label{u10}
\begin{split}
    \int_{\ws}\sum_{i=1}^q \bbabs{&\bbket{\nabla^\bv_{e_i} - \frac{1}{q}c_\bv(e_i)\bket{\psi_\beta \vartheta + \beta^{-1}V}}u_1}^2\\
    &\le \int_{\ws}\bbbac{\bket{1+ O_{r}(\beta)}\sum_{i=1}^q\babs{P^{\bv;q}_{e_i}u_1}^2+ \frac{1}{q}|\calb_{h_\delta,\psi_\beta,\beta^{-1}}\,u_1|^2 }\\
    &\qquad\qquad + \int_\ws O_{r}\bbbket{\frac{1}{\beta}+ \frac{\varepsilon}{\beta^2}}|u_1|^2 + \int_{\partial \ww} O_r(1)\,|u_1|^2;
\end{split}
\end{align}
cf.\ \eqref{u3}. By \cite[(3.26)]{chen} and $h_\delta = 1$ on $\supp(u_1)$, we have
\begin{align}\label{u11}
    \begin{split}
        \|\mathcal{B}_{h_\delta,\psi_\beta,\beta^{-1}}\,u_1\|^2
        &=\int_{\ws}|D_\bv \,u_1|^2+\int_{\ws\,\cap\, \partial \ww} \psi_\beta | u_1|^2\\
        &\quad+\int_{\ws}\bbbinn{u_1,\bbbket{\psi_\beta^2 +c_\bv({\rm d}\psi_\beta)\vartheta+\frac{V^2}{\beta^2}+\frac{[D_\bv,V]}{\beta}}u_1},
    \end{split}
\end{align}
where we used $\inn{u_1,c_\bv(\nu)Vu_1} = 0$ on $\partial \ww$ since $u_1\in \Gamma_{\vartheta}(\cals_1)$; cf.\ \cite[(3.23)]{chen}. Recall the bundle endomorphism $\calq_\bv$ defined in \eqref{qbv}. By \eqref{3.16}--\eqref{u11}, we have
\begin{align}\label{3.17}
    \begin{split}
        \|\mathcal{B}_{h_\delta,\psi_\beta,\beta^{-1}}\,u_1\|^2
        &\ge \frac{1}{2}\int_{\ws}\sum_{i=1}^q \bbabs{\bbket{\nabla^\bv_{e_i} - \frac{1}{q}c_\bv(e_i)\bket{\psi_\beta \vartheta + \beta^{-1}V}}u_1}^2\\
        &\quad+ \frac{1}{2}\int_{\ws} \bbbinn{u_1,\bbbket{\calq_\bv + O\bbbket{\frac{1}{\beta^2 r}} + O_{r}\bbbket{\frac{1}{\beta} + \frac{\varepsilon}{\beta^2}}}u_1}\\
        &\qquad + \frac{1}{2}\int_{\ws\,\cap\,\partial\ww} \bbbac{\frac{qH^{\calf}}{2\beta} + \psi_\beta + O_r(1)}|u_1|^2;
    \end{split}
\end{align}
cf.\ \eqref{u4}. Let $\sat_{\inter W}(\cdot)$ denote the saturation of subsets on $(\inter W,F)$. We define 
\begin{equation}
    L' = L \cup \partial K.
\end{equation}
Similar to \eqref{add0}--\eqref{add}, we have
\begin{equation}\label{control2}
    \bket{k^F + 4\calr^{E}_q}|_{L'} > 0
    \quad\text{and}\quad 
    K \subseteq \sat_{\inter W}(L').
\end{equation} 

By \eqref{ybetatilde} and \eqref{control2}, there also exist $a>0$ independent of $\bvr,\delta$ and an open subset $U\subseteq W$ containing $L'$ such that
\begin{equation}\label{q6}
    \calq_\bv|_{\bar{\pi}^{-1}(U)} \ge a\beta^{-2}>0\quad\text{and}\quad \calq_\bv|_{\bar{\pi}^{-1}(K)}\ge 0.
\end{equation}
By \eqref{6.1iii}, the formula \eqref{q4} still holds. By \eqref{control2}, we have
\begin{equation}
K \subseteq \sat_{\inter W}(U)
\end{equation}
holds for the open subset $U$. By \cref{lem2}, there exist a compact subset $L_0\subseteq \inter W$ and a constant $C_3>0$ both independent of $\bvr,\delta$ such that
\begin{align}\label{poin}
    \begin{split}
        C_3\int_{\bar{\pi}^{-1}(K)}\frac{|u_1|^2}{\beta^2}
        &\le \int_{\bar{\pi}^{-1}(L_0)}\sum_{i=1}^q \bbabs{\bbket{\nabla^\bv_{e_i} - \frac{1}{q}c_\bv(e_i)\bket{\psi_\beta \vartheta + \beta^{-1}V}}u_1}^2\\
        &\qquad\qquad+ \int_{\bar{\pi}^{-1}(U)} \frac{|u_1|^2}{\beta^2}.
    \end{split}
\end{align}
As in \eqref{u5}, it follows from \eqref{q4}, \eqref{3.17}, \eqref{q6} and \eqref{poin} that
\begin{align}\label{u51}
    \begin{split}
        \|\mathcal{B}_{h_\delta,\psi_\beta,\beta^{-1}}\,u_1\|^2
        &\ge
        \frac{\min\!\bac{4C_3,4aC_3,\alpha^2q^2(1-\eta^2)}}{16\beta^2}\int_{\ws}|u_1|^2 \\
        &\qquad+\int_{\ws}\bbbac{ O\bbbket{\frac{1}{\beta^2 r}}+ O_r\bbbket{\frac{1}{\beta} + \frac{\varepsilon}{\beta^2}}}|u_1|^2\\
        &\qquad\qquad 
        +\frac{1}{2}\int_{\ws\,\cap\,\partial\ww} \bbbac{\frac{qH^{\calf}}{2\beta} + \psi_\beta + O_r(1)}|u_1|^2.
    \end{split}
\end{align}

By \eqref{6.1iii}, \eqref{greaterthan} and \eqref{conpsi1}, we have on $\ws\cap\partial\ww$ that
\begin{equation}\label{h1}
    \frac{qH^{\calf}}{2\beta} + \psi_\beta \ge \frac{\alpha q}{2\beta}\bbbac{\eta\tan\!\bbbket{\frac{\alpha q \eta ( l+ \varpi)}{2}}- \tan\bbbket{\frac{\alpha q l}{2}}} > 0.
\end{equation}
By \eqref{esti53}, \eqref{splitvarphi1}--\eqref{3.21}, \eqref{u51}--\eqref{h1} and $h_\delta \le 1$, there exist sufficiently large $r>0$ and sufficiently small $\beta = \varepsilon,\delta,C_0>0$ such that
\begin{align}\label{ind13}
\|\mathcal{B}_{h_\delta,\psi_\beta,\beta^{-1}}\,u\|\geq C_0\|u\| 
\end{align}
holds for all $u\in \Gamma_\vartheta(\cals_\bv|_\ww)$. It follows that
\begin{align}\label{ind14}
{\rm ind}\bket{\mathcal{B}_{h_\delta,\psi_\beta,\beta^{-1},+}}=0.
\end{align}
By \eqref{ind10}--\eqref{ind12} and \eqref{ind14}, we obtain the formula \eqref{result4}.
\end{proof}

Similar to the proof of \cref{leaf30}, we know \cref{main3} implies \cref{t1.2-0}.

\vspace{0.5em}
\noindent\textbf{AI Disclosure}. After completing the initial manuscript, the authors used AI tools for proofreading and for extending \cref{leaf30} to \cref{t1.4} by weakening the strict leafwise distance condition. In particular, \cref{cfm} was suggested by AI. The authors wrote and verified the final manuscript and take full responsibility for its content.

\vspace{0.5em}
\noindent\textbf{Acknowledgments.} 
The authors are grateful to Professor Weiping Zhang for many helpful suggestions. H. Chen was partially supported by National Key R\&D Program of China (2024YFA1013202), National Natural Science Foundation of China (Grant No.\ 11931007 and 12621005), and Nankai Zhide Foundation. G. Su was partially supported by National Natural Science Foundation of China (Grant No.\ 12425106 and 12621005), Natural Science Foundation of Tianjin, China (Grant No.\ 25JCZDJC01110), Fundamental Research Funds for the Central Universities, Grant No.\ 050-63261176, and Nankai Zhide Foundation.

\end{document}